\newtheorem{theorem}{Theorem}[section]
\newtheorem*{theorem*}{Theorem}
\newtheorem*{notation}{Notation}
\newtheorem{proposition}[theorem]{Proposition}
\newtheorem{lemma}[theorem]{Lemma}
\newtheorem{corollary}[theorem]{Corollary}
\newtheorem{Definition}[theorem]{Definition}
\newenvironment{definition}{\begin{Definition}\rm}{\end{Definition}}
\newtheorem{Remark}[theorem]{Remark}
\newenvironment{remark}{\begin{Remark}\rm}{\end{Remark}}
\newtheorem{Example}[theorem]{Example}
\newenvironment{example}{\begin{Example}\rm}{\end{Example}}
\theoremstyle{definition}
\newtheorem{question}{Question}
\newcommand{\C}{\mathbb{C}}
\newcommand{\N}{\mathbb{N}}
\newcommand{\R}{\mathbb{R}}
\renewcommand{\AA}{\mathcal{A}}
\newcommand{\KK}{\mathcal{K}}
\renewcommand{\SS}{\mathcal{S}}
\renewcommand{\tilde}{\widetilde}
\newcommand{\indi}[1]{\mathds{1}_{#1}}
\begin{document}

	\title[$\mathcal{U}$-Frequent hypercyclicity notions and related weighted densities]{$\mathcal{U}$-Frequent hypercyclicity notions and related weighted densities}
	
	\author{R. Ernst, C. Esser, Q. Menet}

	\address{Romuald Ernst, LMPA, Centre Universitaire de la Mi-Voix, Maison de la Recherche Blaise-Pascal, 50 rue Ferdinand Buisson, BP 699, 62228 Calais, France}
	\email{ernst.r@math.cnrs.fr}

	\address{Céline Esser, Université de Liège - Institut de Mathématique, Allée de la Découverte 12, 4000 Liège, Belgique}
	\email{celine.esser@uliege.be}

	\address{Quentin Menet, Laboratoire de Mathématiques de Lens, Université d'Artois, Rue Jean Souvraz SP 18, 62307 Lens, France}
	\email{quentin.menet@univ-artois.fr}

\thanks{The first and the third author were supported by the grant ANR-17-CE40-0021 of the French National Research Agency ANR (project Front) and by Programme PEPS JC 2018 INSMI}

\begin{abstract}
We study dynamical notions lying between $\mathcal{U}$-frequent
hypercyclicity and reiterative hypercyclicity by investigating
weighted upper densities between the unweighted upper density and the
upper Banach density. While chaos implies reiterative hypercyclicity,
we show that chaos does not imply $\mathcal{U}$-frequent
hypercyclicity with respect to any weighted upper density. Moreover, we
show that if $T$ is $\mathcal{U}$-frequently hypercyclic
(resp. reiteratively hypercyclic) then the n-fold product of $T$ is
still $\mathcal{U}$-frequently hypercyclic (resp. reiteratively
hypercyclic) and that this implication is also satisfied for each of
the considered $\mathcal{U}$-frequent hypercyclicity notions.

\end{abstract}

\maketitle

Let $X$ be a separable infinite-dimensional Banach space and $T$ be a linear continuous operator on $X$. The operator $T$ is said to be hypercyclic if there exists a vector $x\in X$ having a dense orbit under the action of $T$ i.e. $\text{Orb}(x,T):=\{T^nx:n\in\N\}$ is dense in $X$. Such a vector $x$ is said to be hypercyclic for $T$ and we denote by $HC(T)$ the set of hypercyclic vectors for $T$. In other words, $x$ is a hypercyclic vector for $T$ if and only if for every non-empty open subset $U\subset X$, the return set $N(x,U):=\{n\in\N:T^nx\in U\}$ is non-empty (or equivalently infinite). 

The first appearance of such an operator in the literature goes back to Birkhoff \cite{Birk} in 1929 who proved that the translation
operators $T_a:f\mapsto f( \cdot +a)$ on the space of entire functions $H(\C)$ are hypercyclic when $a\neq0$. However, even if some other important examples have been given before, the intense study of hypercyclicity  only began thirty years ago with the work of Kitaï \cite{Kitai} in 1982 and the deep work of Godefroy and Shapiro \cite{Godefroyshapiro} in 1991. Moreover, chaotic operators are also introduced in this last paper; an operator $T$ is said to be chaotic if it satisfies the two following conditions:
\begin{itemize}
	\item $T$ is hypercyclic,
	\item $T$ admits a dense set of periodic points.
\end{itemize}
Among all the questions that have been considered in this field, those
concerning the dynamical properties of an operator related to another
one have interested many authors. One may think for example of Ansari's result
\cite{Ansari} stating that every iterate of a hypercyclic operator is
itself hypercyclic, or of the so-called  Herrero's $T\times T$ problem: is $T\times T$ hypercyclic as soon as $T$ is? This question has been solved in the negative by De la Rosa and Read \cite{Delarosaread} in 2009 for some Banach spaces and by Bayart and Matheron \cite{Baymathyppb} for classical Banach spaces including the Hilbert spaces.

Soon after its introduction by Bayart and Grivaux \cite{Baygrihyp} in 2004, frequent hypercyclicity became one of the main subject of studies in linear dynamics. Recall that an operator $T$ is said to be frequently hypercyclic if there exists a vector $x\in X$ such that for every non-empty open subset $U\subset X$, the return set $N(x,U):=\{n\in\N: T^nx\in U\}$ has positive lower density  i.e. $\underline{d}(N(x,U))>0$, where for any $A\subseteq \N$
\[\underline{d}(A)=\liminf_{N\to+\infty}\frac{\#([0,N]\cap A)}{N+1}.\]
The choice of ``measuring''  the size of the return set thanks to the
lower natural density was motivated by the notion of ergodic operator
and Birkhoff's Ergodic Theorem. However some other notions arose
naturally later while varying the densities. In 2009, Shkarin
\cite{Shkaspectrumfhc} 
introduced the weaker notion of
$\mathcal{U}$-frequent hypercyclicity by replacing the lower density
by the upper natural density $\overline{d}$ defined for every
$A\subseteq\N$ by
\[\overline{d}(A)=\limsup_{N\to+\infty}\frac{\#([0,N]\cap A)}{N+1}.\]
In 2015, Bès, Menet, Peris and Puig \cite{Besmenperpuig} obtained an even weaker notion, called reiterative hypercyclicity, by replacing these densities by the upper Banach density $\overline{Bd}$ which is defined by 
\[\overline{Bd}(A)=\lim_{N\to+\infty}\frac{b_N}{N}\quad \text{where}
  \quad b_N=\limsup_{k\to+\infty}\#(A\cap [k+1,k+N]).\] 
Let us finally mention that in 2017, Ernst and Mouze
\cite{ErnstmouzequantFHC} used the so-called weighted densities to
provide a family of dynamical notions indexed by positive sequences
$a:=(a_n)_{n\in\N}$ satisfying some growth conditions. We will
introduce precisely these notions of weighted frequent hypercyclicity
in Section~\ref{sec:1}. 

 Herrero's $T\times T$ problem also spawned several similar problems. Indeed, Grosse-Erdmann and Peris proved
\cite{Gropefhdense} that if $T$ is frequently hypercyclic then $T$ is
weakly-mixing i.e. $T\times T$  is hypercyclic. This result has been
improved by Bes, Menet, Peris and Puig \cite{Besmenperpuig} in 2015
who proved that reiterative hypercyclicity also implies that $T\times T$ is hypercyclic. One can wonder if $T\times T$ can satisfy stronger dynamical properties. For instance, an important open problem in linear dynamics posed by Bayart and Grivaux \cite{Baygrifrequentlyhcop} in 2006 consists in determining if $T\times T$ is frequently hypercyclic as soon as $T$ is frequently hypercyclic. This question is also open for the notions of reiterative hypercyclicity and $\mathcal{U}$-frequent hypercyclicity:

\begin{question}\label{Q1}
	If $T$ is $\mathcal{U}$-frequently hypercyclic (resp. reiteratively hypercyclic), does it automatically imply that $T\times T$ is $\mathcal{U}$-frequently hypercyclic (resp. reiteratively hypercyclic)?
\end{question}

A study of weighted densities giving rise to dynamical notions lying between $\mathcal{U}$-frequent hypercyclicity and frequent hypercyclicity was recently conducted in \cite{Menet2019}. In this article, we will work with weighted densities giving rise to dynamical notions lying between reiterative hypercyclicity and $\mathcal{U}$-frequent hypercyclicity. The considered weight sequences will be chosen in a particular class $\mathcal{S}$ and will create a bridge between reiterative hypercyclicity and $\mathcal{U}$-frequent hypercyclicity. 

Recently there has been an increasing interest toward to the comparison of chaos with the other
dynamical notions obtained thanks to different densities. In particular, Menet \cite{MenetChaos} proved in 2017 that chaos does not
imply frequent hypercyclicity nor $\mathcal{U}$-frequent hypercyclicity but implies reiterative hypercyclicity. However, since
$\mathcal{U}$-frequent hypercyclicity with respect to weights in $\mathcal{S}$ lies between these last two notions, the question is still open in this case.

\begin{question}\label{Q2}
	Is every chaotic operator $\mathcal{U}$-frequently hypercyclic with respect to some weight in $\mathcal{S}$?
\end{question}

The aim of this article is to answer both Question \ref{Q1} and \ref{Q2}. 

In Section 1, we give a precise definition of $\mathcal{U}$-frequent
hypercyclicity with respect to weights and we focus on the links
between this notion and the pre-existing ones: $\mathcal{U}$-frequent
hypercyclicity and reiterative hypercyclicity. In fact, we prove that
if we restrict to some natural class $\SS$ of weights, $\mathcal{U}$-frequent
hypercyclicity implies $\mathcal{U}$-frequent hypercyclicity with respect to
weights in $\SS$ which implies reiterative hypercyclicity. More
precisely, we get the following result: 
\begin{theorem*}
	Let $T$ be a bounded operator on a Banach space $X$. Then:
	\begin{enumerate}
		\item $T$ is $\mathcal{U}$-frequently hypercyclic if and only if $T$ is $\mathcal{U}$-frequently hypercyclic with respect to each $a\in\SS$.
		\item $T$ is reiteratively hypercyclic if and only if $T$ is $\mathcal{U}$-frequently hypercyclic with respect to some $a\in\SS$.
	\end{enumerate}
	Moreover, 
	\[UFHC(T)=\cap_{a\in\SS}UFHC_a(T)\text{ and }RHC(T)=\cup_{a\in\SS}UFHC_a(T).\]
\end{theorem*}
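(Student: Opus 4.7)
The plan is to attack the stronger set-theoretic equalities $UFHC(T)=\bigcap_{a\in\SS}UFHC_a(T)$ and $RHC(T)=\bigcup_{a\in\SS}UFHC_a(T)$, from which the operator-level equivalences (1) and (2) can be read off.

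\textbf{Forward inclusions.} The starting point will be that $\SS$ is designed so that every $a\in\SS$ satisfies the implications $\overline{d}(A)>0 \Rightarrow \overline{d}_a(A)>0$ and $\overline{d}_a(A)>0 \Rightarrow \overline{Bd}(A)>0$ for every $A\subseteq\N$ --- exactly the property that places the notion $UFHC_a$ between $UFHC$ and $RHC$ as announced in Section~1. Applied pointwise to each return set $N(x,U)$, these yield $UFHC(T)\subseteq UFHC_a(T)\subseteq RHC(T)$ for every $a\in\SS$, hence $UFHC(T)\subseteq\bigcap_a UFHC_a(T)$ and $\bigcup_a UFHC_a(T)\subseteq RHC(T)$.

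\textbf{Reverse inclusion for (1).} For $\bigcap_a UFHC_a(T)\subseteq UFHC(T)$ I would argue by contraposition. If $x\notin UFHC(T)$, either $x\notin HC(T)$ --- in which case $x\notin UFHC_a(T)$ for every $a$ --- or there is a non-empty open set $U$ such that $A:=N(x,U)$ satisfies $\overline{d}(A)=0$. The task then reduces to the purely combinatorial statement
\[ \overline{d}(A)=0 \ \Longrightarrow\ \exists\, a\in\SS\ \text{such that}\ \overline{d}_a(A)=0. \]
Such a weight $a$ will be built along a subsequence $N_j\to\infty$ on which $\#(A\cap[0,N_j])/(N_j+1)\to 0$ very fast: the idea is to position the windows of $a$ on these initial segments so that infinitely many of the ratios defining $\overline{d}_a(A)$ inherit the vanishing.

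\textbf{Reverse inclusion for (2) and main obstacle.} For $RHC(T)\subseteq\bigcup_a UFHC_a(T)$, I would fix $x\in RHC(T)$ together with a countable base $(U_k)_{k\in\N}$ of non-empty open subsets of $X$. Each $A_k:=N(x,U_k)$ has $\overline{Bd}(A_k)>0$, so there are $\delta_k>0$ and arbitrarily long intervals on which $A_k$ has density at least $\delta_k$. The remaining problem --- and the main obstacle --- is to build a \emph{single} weight $a\in\SS$ for which $\overline{d}_a(A_k)>0$ simultaneously for every $k$. My plan is a diagonal stitching: reserve an infinite subsequence of windows of $a$ for each $A_k$ and place those windows at shifts realising the Banach density of $A_k$. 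The delicate point will be to interleave these scales so as to keep $a$ inside $\SS$ (its growth/regularity constraints) while still catching every $A_k$ with positive frequency; this balancing act is the technical heart of the theorem, after which (1) and (2) follow upon observing that $UFHC(T)\neq\emptyset$ (resp.\ $RHC(T)\neq\emptyset$) is equivalent to the non-emptiness of the corresponding intersection (resp.\ union) of $UFHC_a(T)$'s.
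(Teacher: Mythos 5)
Your overall architecture --- reduce everything to the two set equalities, get the forward inclusions from the ordering $\overline d\le\overline d_a\le\overline{Bd}$ for $a\in\SS$, and for $RHC(T)\subseteq\bigcup_{a}UFHC_a(T)$ build one weight catching the return sets of a countable base simultaneously --- is exactly the paper's (Lemma~\ref{lemErnst}, Proposition~\ref{Prop2}, Proposition~\ref{Prop3}). Two points, however, need attention. First, your reverse inclusion for (1) contains a step that fails as written: you propose to prove $\overline d(A)=0\Rightarrow\exists\,a\in\SS,\ \overline d_a(A)=0$ by placing the windows of $a$ along a subsequence $N_j$ ``so that infinitely many of the ratios defining $\overline d_a(A)$ inherit the vanishing''. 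Since $\overline d_a$ is a \emph{limsup}, making infinitely many ratios small proves nothing; you must control $\frac{\sum_{j=0}^{N}a_j\indi{A}(j)}{\sum_{j=0}^{N}a_j}$ for \emph{all} large $N$, and a weight of $\SS$ whose mass is concentrated on well-chosen windows can perfectly well assign positive upper $a$-density to a set of zero natural upper density --- that is precisely the content of Proposition~\ref{Prop3} and the whole point of the gap between $UFHC$ and $RHC$. The claim you need is true, but the paper gets it uniformly and for free: Proposition~\ref{Prop1} shows that the \emph{fixed} polynomial weight $a=(n^{\alpha})$ satisfies $\overline d_a\le(1+\alpha)\overline d$, hence $UFHC_a(T)=UFHC(T)$ for that single $a\in\SS$, and $\bigcap_{a\in\SS}UFHC_a(T)\subseteq UFHC(T)$ is immediate; no contraposition or adapted construction is required.

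Second, for (2) you correctly identify the simultaneous construction of a single $a\in\SS$ with $\overline d_a(A_k)>0$ for all $k$ as the technical heart, but you leave it entirely open, and it is the actual content of the theorem. The paper's Proposition~\ref{Prop3} resolves your ``delicate point'' as follows: partition $\N$ into infinite sets $(I_n)_{n\in\N}$, choose $k_p$ with $k_{p+1}>k_p+p$ and $\#\left(A_n\cap[k_p,k_p+p[\right)\ge p\,\delta_n$ whenever $p\in I_n$, and define $a_0=1$ and $a_j=\frac1p\sum_{l=0}^{j-1}a_l$ for $j\in[k_p,k_{p+1}[$. This forces $v_j(a)=\frac1p$ on the $p$-th block, so $(v_j(a))_{j\in\N}$ is automatically non-increasing and tends to $0$, i.e.\ $a\in\SS$, and evaluating the density ratio at $N=k_p+p-1$ loses only a factor $\left(\frac{p+1}{p}\right)^{p}\le e$, yielding $\overline d_a(A_n)\ge\frac1e\overline{Bd}(A_n)$ for every $n$. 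Without some such explicit mechanism your ``diagonal stitching'' is a restatement of the problem rather than a solution, so the proposal as it stands has a genuine gap at the crux of statement (2) and a flawed step in statement (1).
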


In Section 2, we are interested in the properties that $T\times T$ can inherit from $T$. We  obtain a result which applies to $\AA$-hypercyclicity for some upper Furstenberg family $\AA$ using the recent work of Bonilla and Grosse-Erdmann \cite{BongrossUFHC}. In particular, we  get the following result which gives a positive answer to Question \ref{Q1}.
\begin{theorem*}Let $T$ be a bounded operator on a Banach space $X$ and $n\ge 1$.
\begin{itemize}
\item If $T$ is $\mathcal{U}$-frequently hypercyclic then the $n$-fold product $T\times\cdots\times T$ is $\mathcal{U}$-frequently hypercyclic.
\item  If $T$ is reiteratively hypercyclic then the $n$-fold product $T\times\cdots\times T$ is reiteratively hypercyclic.
\end{itemize}
\end{theorem*}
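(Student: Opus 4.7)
The plan is to prove a more general statement at the level of $\AA$-hypercyclicity for upper Furstenberg families $\AA$, from which both bullets follow. The key observation is that the collections
\[
\AA_{\overline d}:=\{A\subseteq\N:\overline d(A)>0\}\quad\text{and}\quad\AA_{\overline{Bd}}:=\{A\subseteq\N:\overline{Bd}(A)>0\}
\]
are upper Furstenberg families in the sense of \cite{BongrossUFHC}, and that $T$ is $\mathcal U$-frequently hypercyclic (resp.\ reiteratively hypercyclic) if and only if there exists $x\in X$ with $N(x,U)\in \AA_{\overline d}$ (resp.\ $\AA_{\overline{Bd}}$) for every nonempty open $U\subseteq X$. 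It therefore suffices to prove: for any upper Furstenberg family $\AA$ falling within the scope of \cite{BongrossUFHC}, $\AA$-hypercyclicity is preserved under finite products.

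For this I would invoke the results of Bonilla and Grosse-Erdmann \cite{BongrossUFHC}, which establish that $\AA$-hypercyclicity is equivalent to a suitable $\AA$-transitivity property and, in particular, that the set of $\AA$-hypercyclic vectors is residual as soon as $T$ is $\AA$-hypercyclic. The core argument then parallels Ansari's theorem on iterates: given an $\AA$-hypercyclic vector $z$ for $T$ and a basic open set $U_1\times\cdots\times U_n\subseteq X^n$, I would look for integer shifts $m_1,\dots,m_{n-1}$ such that
\[
N(z,U_1)\cap\bigl(N(z,U_2)-m_1\bigr)\cap\cdots\cap\bigl(N(z,U_n)-m_{n-1}\bigr)\in\AA,
\]
for then the tuple $(z,T^{m_1}z,\dots,T^{m_{n-1}}z)$ witnesses $\AA$-hypercyclicity of $T\times\cdots\times T$ on the prescribed product open set. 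A Cesàro averaging argument for $\AA_{\overline d}$, and a pigeonhole along arithmetic intervals for $\AA_{\overline{Bd}}$ (both variants of Furstenberg's correspondence principle), yield such shifts; the BGH framework axiomatizes precisely this ``intersection after translation'' property. A final Baire-category argument applied to a countable base of $X^n$ upgrades these per-open-set witnesses into a single $\AA$-hypercyclic vector for $T\times\cdots\times T$.

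The main obstacle will be executing the shift-selection step \emph{simultaneously} for all $n-1$ intersections. For two sets in $\AA$, finding a single shift whose translated intersection lies in $\AA$ is a direct consequence of the upper Furstenberg family axioms; I would then induct on $n$, at each stage applying the two-set statement to the intersection built so far (which stays in $\AA$ by induction) and the next return set $N(z,U_{k+1})$. This is precisely the place where the two considered families succeed while the lower density does not, and it explains why the analogous question for frequent hypercyclicity remains open. A secondary but routine technicality is to verify that both $\AA_{\overline d}$ and $\AA_{\overline{Bd}}$ satisfy all axiomatic assumptions of \cite{BongrossUFHC}; this amounts to translation invariance, closure under supersets, and the additional density compatibility built into that framework.
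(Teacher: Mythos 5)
Your high-level strategy (prove a product theorem at the level of $\AA$-hypercyclicity for upper Furstenberg families and then specialize to $\AA_{\overline d}$ and $\AA_{\overline{Bd}}$) matches the paper's, but the central step is wrong as stated. You reduce everything to the following ``two-set statement'': given $A=N(z,U_1)$ and $B=N(z,U_2)$ in $\AA$, find a shift $m$ with $A\cap(B-m)\in\AA$, and you assert this is a direct consequence of the upper Furstenberg family axioms. It is not, and it is in fact false for $\AA_{\overline d}$: take $A=\bigcup_k[4^k,2\cdot 4^k]$ and $B=\bigcup_k[2\cdot 4^k,4^{k+1}]$; both have upper density $1/2$, yet for every fixed $m$ the set $A\cap(B-m)$ meets $[0,N]$ in $O(m\log N)$ points, so $\overline d(A\cap(B-m))=0$. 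Consequently your induction on $n$, which applies this two-set statement repeatedly, already collapses at the first step. What is true --- and what the paper uses --- is a \emph{self}-intersection property for a \emph{single} set: if $\overline d(A)>\delta$ then $\{k:\ \overline d(A\cap(A-k))>\delta^2/2\}$ has bounded gaps (Bayart--Ruzsa for $\overline d$, Theorem 20 of B\`es--Menet--Peris--Puig for $\overline{Bd}$). This is a nontrivial second-moment argument, not an axiom, and it is precisely hypothesis (2) of the paper's product theorem. To get away with only self-intersections, the paper shrinks $V_1$ to $\tilde{V_1}$ with $T^{n_0}(\tilde{V_1})\subseteq V_2$, so that $N(y,V_2)\supseteq N(y,\tilde{V_1})+n_0$ and both return sets contain translates of the \emph{same} set $N(y,\tilde{V_1})$; your proposal has no analogue of this reduction.

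There is a second constraint you ignore: the shifts $m_i$ cannot be chosen freely among the ``good'' intersection times, because the witnessing tuple must also lie in the prescribed starting open set $U_1\times\cdots\times U_n$ required by the Birkhoff-type transitivity criterion of Bonilla and Grosse-Erdmann. In the paper this forces the shift $k$ to lie simultaneously in $N(U_1,U_2)$ and in a translate of the bounded-gaps set of good times; compatibility holds because $T$ is weakly mixing (so $N(U_1,U_2)$ contains arbitrarily long intervals), a fact which itself must be established first from the syndeticity of $A-A$. Finally, the passage from the $2$-fold to the $n$-fold product is done in the paper not by an $n$-step induction (which would again need the false two-set statement) but by iterating the $2$-fold theorem to reach $2^m$-fold products and invoking the hereditarily upward property of Furstenberg families. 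With these repairs --- the $\tilde{V_1}$ reduction, the quoted syndetic self-intersection theorems, and the iteration trick --- your outline becomes the paper's proof.
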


Finally, in Section 3, using the previous work of Grivaux, Matheron
and Menet \cite{Monster} on operators of C-type, we give an example, for
any fixed $a\in\SS$, of a chaotic operator on $\ell^1(\N)$ which is not
$\mathcal{U}$-frequently hypercyclic for $a$, answering Question~\ref{Q2}.

\section{$\mathcal{U}$-frequent hypercyclicity with respect to $a\in\mathcal{S}$}\label{sec:1}

\begin{notation}
	Let $a:=(a_n)_{n\in\N}$ be a non-decreasing sequence of positive numbers. We define 
	\[v_{n}(a):=\frac{a_n}{\sum_{j=0}^{n-1}a_j}.\]
	In the following, we will focus on a specific class of weighted densities defined with sequences satisfying some growth conditions. We use the following notations for these sequences:
	\begin{align*}
	\SS=\{(a_n)_{n\in\N}\in \R_{+}^{\N}: (a_n)_{n\in\N}\text{ is non-decreasing, } a_n\to+\infty,&\\ (v_{n}(a))_{n\in\N}\text{ is non-increasing and }v_{n}(a)\to 0&\}.\\
	\end{align*}
\end{notation}
For instance, the sequence $a_n=n^\alpha$ belongs to $\SS$ for every $\alpha>0$ while the sequence $a_n=\alpha^n$ is not in $\SS$ for any $\alpha>1$. The work of Freedman and Sember \cite{Freedman} on weighted densities allows to give the following definition.

\begin{definition}
	Let $a:=(a_n)_{n\in\N}$ be a positive sequence such that $\sum_{n=1}^{+\infty}a_n=+\infty$ and $A\subseteq\N$. Then, the upper $a$-density of $A$ is given by
	\[\overline{d}_a(A)=\limsup_{N\to+\infty} \frac{\sum_{j=0}^{N}a_j\indi{A}(j)}{\sum_{j=0}^{N}a_j}.\]
\end{definition}
	 
	 These objects are densities in the sense of Freedman and Sember and enjoy all the classical properties of densities. Moreover, they give rise to dynamical notions.
\begin{definition}
	 Let $a:=(a_n)_{n\in\N}$ be a positive sequence such that
         $\sum_{n=1}^{+\infty}a_n=+\infty$ and let also $T$ be a
         continuous linear operator on a Banach space $X$. Then, $T$
         is said to be $\mathcal{U}$-frequently hypercyclic with
         respect to $a$ (in short $UFHC_{a}$), if there exists $x\in X$ such that for every non-empty open subset $U\subset X$,
	 \[\overline{d}_a(N(x,U))>0.\]
	 In that case, the vector $x$ is also said to be $\mathcal{U}$-frequently hypercyclic with respect to $a$ and we denote by $UFHC_a(T)$ the set of such vectors.	 
\end{definition}

We recall that the family $\SS$ is intended to give rise to dynamical notions between reiterative hypercyclicity and $\mathcal{U}$-frequent hypercyclicity. In \cite{ErnstmouzequantFHC,ErnstmouzeFUCdensities}, the authors focused on the links between frequent hypercyclicity and such dynamical notions given by lower densities. Moreover, the implication between $\mathcal{U}$-frequent hypercyclicity with respect to $a\in \SS$ and $\mathcal{U}$-frequent hypercyclicity is given to us by the following lemma.

\begin{lemma}[{\cite{ErnstmouzequantFHC}}]\label{lemErnst} Let $a=(a_n)_{n\in\N}$ and $b=(b_n)_{n\in\N}$ be positive sequences such that $\sum_{n\in\N} a_n=\sum_{n\in\N} b_n=+\infty.$ Assume that the sequence $\left(\frac{a_n}{b_n}\right)_{n\in\N}$ is eventually decreasing to zero. 
	Then, for every subset $A\subseteq\mathbb{N},$ we have 
	\[\underline{d}_{b}(A)\leq \underline{d}_{a}(A) \leq \overline{d}_{a}(A) \leq \overline{d}_{b}(A).\]
\end{lemma}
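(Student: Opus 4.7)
The plan is to apply Abel's summation by parts with the ratio $c_j:=a_j/b_j$ playing the role of the outer weight. Writing $P_N:=\sum_{j=0}^{N}b_j\indi{A}(j)$ and $T_N:=\sum_{j=0}^{N}b_j$, Abel's transformation yields
\[\sum_{j=0}^{N} a_j\indi{A}(j)=c_N P_N+\sum_{j=0}^{N-1}(c_j-c_{j+1})P_j,\qquad \sum_{j=0}^{N} a_j=c_N T_N+\sum_{j=0}^{N-1}(c_j-c_{j+1})T_j.\]
It suffices to establish the outer inequality $\overline{d}_a(A)\leq\overline{d}_b(A)$: the middle inequality $\underline{d}_a(A)\leq\overline{d}_a(A)$ is trivial, and since $\underline{d}_w(A)=1-\overline{d}_w(A^c)$ holds for any weight $w$ (directly from the definition), applying the upper-density bound to $A^c$ gives $\underline{d}_b(A)\leq\underline{d}_a(A)$.

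To prove $\overline{d}_a(A)\leq L:=\overline{d}_b(A)$, fix $\varepsilon>0$ and choose $M$ large enough so that for every $j\geq M$ one has (i) $c_j-c_{j+1}\geq 0$, by eventual monotonicity of $(c_j)$, and (ii) $P_j\leq (L+\varepsilon)T_j$, by the definition of $L$ as a $\limsup$. Splitting the Abel expressions at $M$: on the tail $j\geq M$ the numerator is bounded by $(L+\varepsilon)$ times the corresponding tail in the denominator, thanks to the non-negativity of $c_j-c_{j+1}$ there together with the pointwise bound on $P_j$; the boundary term $c_N P_N$ is controlled similarly (for $N\geq M$) by $(L+\varepsilon)c_N T_N$. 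The head $j<M$ contributes only an $N$-independent constant $C(M,\varepsilon):=\sum_{j=0}^{M-1}|c_j-c_{j+1}|\bigl[(L+\varepsilon)T_j+P_j\bigr]$. Gathering these estimates produces
\[\sum_{j=0}^{N} a_j\indi{A}(j)\leq (L+\varepsilon)\sum_{j=0}^{N} a_j+C(M,\varepsilon).\]
Since $\sum_{n\geq 0}a_n=+\infty$, dividing by $\sum_{j=0}^{N}a_j$ and letting $N\to+\infty$ then $\varepsilon\to 0$ gives $\overline{d}_a(A)\leq L$, as desired.

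The only technical subtlety is the handling of the initial indices: because $(c_j)$ is only \emph{eventually} monotone and the bound $P_j\leq (L+\varepsilon)T_j$ only holds for large $j$, the Abel sums must be split at a threshold $M$, and the finitely many head terms produce a constant error. The divergence hypothesis $\sum a_n=+\infty$ is precisely what guarantees this error is absorbed after normalization, so the argument is insensitive to the behaviour of $(a_n/b_n)$ on any finite initial segment.
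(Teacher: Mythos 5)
Your argument is correct and complete. Note that this paper does not actually prove Lemma~\ref{lemErnst}: it is quoted from \cite{ErnstmouzequantFHC} without proof, so there is no in-paper argument to compare yours against. Taken on its own terms, your proof is sound: the Abel identity $\sum_{j=0}^{N}a_j\indi{A}(j)=c_NP_N+\sum_{j=0}^{N-1}(c_j-c_{j+1})P_j$ with $c_j=a_j/b_j$ is correct (with $P_{-1}=0$), the tail estimate uses exactly the two facts you isolate (non-negativity of $c_j-c_{j+1}$ and $P_j\leq(L+\varepsilon)T_j$ for $j\geq M$), the head contributes a constant that divergence of $\sum a_n$ kills after normalisation, and the reduction of $\underline{d}_b(A)\leq\underline{d}_a(A)$ to the upper-density inequality via $\underline{d}_w(A)=1-\overline{d}_w(A^c)$ is legitimate since the outer inequality is proved for \emph{every} subset, in particular for $A^c$. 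Two small remarks: first, your proof never uses that $a_n/b_n$ tends to $0$, only that it is eventually non-increasing together with $\sum a_n=+\infty$, so you have in fact established a slightly stronger statement than the one quoted (the full hypothesis matters elsewhere in \cite{ErnstmouzequantFHC}, e.g.\ to make the comparison non-trivial, but not for this inequality); second, it would be worth stating explicitly that $L\leq 1<\infty$ so that the $\limsup$ characterisation ``$P_j\leq(L+\varepsilon)T_j$ for all large $j$'' applies without caveat.
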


The assumption $a_n\nearrow+\infty$ in the definition of $\SS$ ensures that $\overline{d}\leq\overline{d}_a$ for every $a\in\SS$ and thus that $\mathcal{U}$-frequent hypercyclicity implies $\mathcal{U}$-frequent hypercyclicity with respect to $a$. Moreover, the assumption $v_{n}(a)\searrow 0$ ensures that we are not dealing with the case where the upper-density associated to $a$ is supported by some infinite sequence i.e. we exclude the cases where there exists a sequence $(n_k)_{k\in\N}$ so that $\overline{d}_a(I)>0$ as soon as $\#(I\cap\{n_k\}_{k\in\N})=\infty$.

One can wonder if there is a gap between $\mathcal{U}$-frequent hypercyclicity and $\mathcal{U}$-frequent hypercyclicity with respect to $a\in \SS$. In fact, this is not the case since there is no difference between $\mathcal{U}$-frequent hypercyclicity and $\mathcal{U}$-frequent hypercyclicity with respect to some $a$ having polynomial growth.

\begin{proposition}\label{Prop1}
	For every $\alpha>0$, the sequence $a=
        (n^{\alpha})_{n \in \N} \in \SS$ satisfies 
	\[\frac{1}{1+\alpha}\overline{d}_a(I)\leq\overline{d}(I)\leq
          \overline{d}_a(I).\]
for every $I \subseteq \N$.
\end{proposition}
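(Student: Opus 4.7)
The plan is to split the statement into the three claims $a \in \SS$, $\overline{d}(I) \leq \overline{d}_a(I)$, and $\overline{d}_a(I) \leq (1+\alpha)\overline{d}(I)$, and treat them separately. The monotonicity and divergence of $a_n = n^\alpha$ are immediate. For the requirement $v_n(a) \to 0$, I would use the elementary integral comparison $\sum_{j=0}^{n-1} j^\alpha \sim n^{\alpha+1}/(\alpha+1)$ to conclude $v_n(a) \sim (\alpha+1)/n$, and verify that $v_n(a)$ is non-increasing via the equivalent inequality $(a_{n+1} - a_n) S_n \leq a_n^2$ with $S_n = \sum_{j<n} a_j$, which follows from the MVT bound $(n+1)^\alpha - n^\alpha \leq \alpha \max(n, n+1)^{\alpha - 1}$ together with $S_n \leq n^{\alpha+1}/(\alpha+1)$.

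The right inequality $\overline{d}(I) \leq \overline{d}_a(I)$ will follow directly from Lemma~\ref{lemErnst} applied with the sequences $b_n \equiv 1$ and $a_n = n^\alpha$: the quotient $b_n/a_n = 1/n^\alpha$ is decreasing to zero, so the lemma yields $\overline{d}(I) = \overline{d}_b(I) \leq \overline{d}_a(I)$.

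For the left inequality, the key observation is that the pointwise bound $j^\alpha \leq N^\alpha$ for $j \in [0,N]$ gives
\[\sum_{j=0}^N j^\alpha \indi{I}(j) \leq N^\alpha \cdot \#(I \cap [0, N]).\]
Dividing by $\sum_{j=0}^N j^\alpha \sim N^{\alpha+1}/(\alpha+1)$ and taking the limsup yields
\[\overline{d}_a(I) \leq (1+\alpha)\limsup_{N \to \infty} \frac{\#(I \cap [0, N])}{N+1} = (1+\alpha)\overline{d}(I).\]

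I expect no serious obstacle; the only mildly tedious point is confirming that $v_n(a)$ is non-increasing for all $n$ rather than only eventually, which is a case analysis on $\alpha \leq 1$ versus $\alpha \geq 1$. It is reassuring that the crude pointwise bound already produces the optimal constant $(1+\alpha)$: since the mass of $\sum_{j=0}^N j^\alpha$ concentrates near $j = N$, the inequality is essentially sharp, with equality approached when $I$ is contained in the final $\overline{d}(I) \cdot N$ integers below $N$.
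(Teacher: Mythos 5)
Your treatment of the two density inequalities is correct and essentially the same as the paper's: the paper also obtains $\overline{d}_a(I)\leq(1+\alpha)\overline{d}(I)$ by bounding every weight $j^{\alpha}$ with $j\in I\cap[0,N]$ by the largest one and comparing $\sum_{j\leq N}j^{\alpha}$ with $N^{\alpha+1}/(\alpha+1)$, and it likewise invokes Lemma~\ref{lemErnst} for $\overline{d}\leq\overline{d}_a$. The only cosmetic difference is that the paper writes the first bound along the increasing enumeration $(n_k)$ of $I$ and uses $\limsup_k(k+1)/n_k=\overline{d}(I)$, whereas you keep the variable $N$ throughout; your passage to the limsup of the product is legitimate because the factor $N^{\alpha}(N+1)/\sum_{j\leq N}j^{\alpha}$ converges to $\alpha+1$.

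The one place where your sketch does not close is the claim that $(v_n(a))_{n}$ is non-increasing for \emph{all} $n$ when $\alpha>1$. Combining $(n+1)^{\alpha}-n^{\alpha}\leq\alpha(n+1)^{\alpha-1}$ with $S_n\leq n^{\alpha+1}/(\alpha+1)$ gives
$(a_{n+1}-a_n)S_n\leq\frac{\alpha}{\alpha+1}\bigl(1+\frac{1}{n}\bigr)^{\alpha-1}n^{2\alpha}$,
and the prefactor exceeds $1$ for small $n$ once $\alpha$ is moderately large (already for $\alpha=3$, $n=2$), so these estimates only yield \emph{eventual} monotonicity. The inequality $a_n^{2}\geq(a_{n+1}-a_n)S_n$ is in fact true for every $n$, but establishing it for small $n$ requires a sharper bound on $S_n$ than the integral comparison. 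To be fair, the paper itself never proves $(n^{\alpha})\in\SS$ — it is asserted without proof immediately after the definition of $\SS$, and the proof of the proposition only addresses the inequalities — so this is a gap relative to the full statement rather than a divergence from the paper's argument.
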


\begin{proof}
Let $I$ be a subset of $\mathbb{N}$. If $I$ is finite, the result is obvious since $\overline{d}_a(I)=\overline{d}(I)=0$. We can thus assume that $I$ is infinite and we denote by $(n_k)_{k\in\N}$ the increasing enumeration of $I$. Therefore, we have
	\begin{align*}
	\overline{d}_a(I)&=\limsup_{k\to+\infty}\frac{\sum_{j=0}^{k}a_{n_j}}{\sum_{j=0}^{n_k}a_j}\\
	&\leq \limsup_{k\to+\infty}\frac{(k+1)n_{k}^{\alpha}}{\sum_{j=0}^{n_k}a_j}\\
	&\leq \limsup_{k\to+\infty}(\alpha+1)\frac{(k+1)n_{k}^{\alpha}}{n_k^{\alpha+1}}\\
	&=(\alpha+1)\limsup_{k\to+\infty}\frac{(k+1)}{n_k}\\
	&=(\alpha+1)\overline{d}(I).
	\end{align*}
The second inequality follows from Lemma~\ref{lemErnst}.
\end{proof}

The next result allows to order the upper Banach density and the weighted upper densities $\overline{d}_a$ with $a\in \SS$.

\begin{proposition}\label{Prop2}
	For every $a\in\SS$ and every $I\subseteq\N$, $\overline{d}_a(I)\leq \overline{Bd}(I)$.
\end{proposition}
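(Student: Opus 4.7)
The plan is to estimate $\sum_{n=0}^N a_n\indi{I}(n)$ from above by essentially $\overline{Bd}(I)\cdot\sum_{n=0}^N a_n$ via a summation-by-parts argument, using the hypothesis $v_n(a)\searrow 0$ to absorb the boundary errors. Set $d:=\overline{Bd}(I)$; one may assume $d<1$, otherwise the inequality is trivial, and fix $\epsilon>0$.

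First, I would extract a good window length from the Banach density: since $b_L/L\to d$, for $L$ sufficiently large one has $b_L\leq L(d+\epsilon/2)$, and then the $\limsup$ in the definition of $b_L$ produces $k_0\in\N$ such that $\#(I\cap[k+1,k+L])\leq L(d+\epsilon)$ for every $k\geq k_0$. Tiling an arbitrary interval $[i+1,N]$ by length-$L$ windows and bounding the remainder trivially yields the uniform estimate
\[\#\bigl(I\cap[i+1,N]\bigr)\leq (N-i)(d+\epsilon)+L\qquad\text{for every }i\geq k_0.\]

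Next, write $a_n=a_0+\sum_{k<n}(a_{k+1}-a_k)$ and exchange orders of summation to obtain the Abel-type identity
\[\sum_{n=0}^N a_n\,\indi{I}(n)=a_0\,\#(I\cap[0,N])+\sum_{k=0}^{N-1}(a_{k+1}-a_k)\,\#(I\cap[k+1,N]).\]
I then split the outer sum at $k_0$: on the initial segment $k<k_0$ use the trivial bound $\#(I\cap[k+1,N])\leq N-k$, which telescopes into a contribution of order at most $(N+1)a_{k_0}$. On the tail $k\geq k_0$ insert the estimate above, and exploit the identity $\sum_{k=0}^{N-1}(a_{k+1}-a_k)(N-k)=\sum_{j=0}^N a_j-(N+1)a_0$ to arrive at an inequality of the shape
\[\sum_{n=0}^N a_n\,\indi{I}(n)\leq (d+\epsilon)\sum_{n=0}^N a_n+L\,a_N+(N+1)\,a_{k_0}.\]

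Finally, divide by $\sum_{n=0}^N a_n$ and let $N\to+\infty$, then $\epsilon\to 0^+$. The error $La_N/\sum_{n=0}^N a_n$ is at most $L\,v_N(a)$ and tends to $0$ \emph{precisely} by the defining property of the class $\SS$, while the error $(N+1)a_{k_0}/\sum_{n=0}^N a_n$ vanishes because $a_n\to+\infty$ forces $\sum_{n\leq N}a_n$ to grow faster than linearly (e.g.\ comparing it to $(N/2)\,a_{\lceil N/2\rceil}$). I expect the main obstacle to lie in the bookkeeping of this Abel decomposition: the split at $k_0$ must be chosen so that the boundary pieces land exactly on a term of the form $L\,a_N$ plus a linear-in-$N$ remainder. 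Any cruder splitting produces an error that one can no longer dominate by $v_N(a)\to 0$, so the hypothesis $v_n(a)\searrow 0$ is used at its sharpest precisely here.
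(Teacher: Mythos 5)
Your argument is correct, and it takes a genuinely different route from the paper's. The paper also begins by extracting a window length $p$ with $\#(I\cap[k,k+p[)\leq\delta p$ for $k\geq k_0$, but instead of summation by parts it partitions $[m_0p,+\infty[$ into consecutive blocks of length $p$, bounds the weighted count on each block by $\delta p\, a_{(s+1)p-1}$, and then converts $p\,a_{(s+1)p-1}$ back into $\sum_{l=0}^{p-1}a_{sp+l}$ up to the multiplicative factor $(1+v_{m_0p}(a))^{p-1}$ via the identity $a_n=v_n(a)\sum_{j<n}a_j$; letting $m_0\to\infty$ at the end kills that factor. Your Abel decomposition $\sum_{n\leq N}a_n\indi{I}(n)=a_0\#(I\cap[0,N])+\sum_{k<N}(a_{k+1}-a_k)\#(I\cap[k+1,N])$ replaces this multiplicative bookkeeping by an additive one: the non-negativity of the increments $a_{k+1}-a_k$ lets you insert the uniform estimate $\#(I\cap[k+1,N])\leq(N-k)(d+\epsilon)+L$ directly, and the two boundary errors $L\,a_N$ and $(N+1)a_{k_0}$ are absorbed by $v_N(a)\to 0$ and by the superlinear growth of $\sum_{n\leq N}a_n$ (a consequence of $a_n\nearrow+\infty$), respectively. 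A small gain of your route is that it never uses the monotonicity of $(v_n(a))_{n\in\N}$, only the convergence $v_n(a)\to 0$, whereas the paper's chain of inequalities $v_{(s+1)p-1}(a)\prod_j(1+v_j(a))\leq v_{sp}(a)(1+v_{sp}(a))^{p-1}\leq(1+v_{m_0p}(a))^{p-1}v_{sp}(a)$ leans on that monotonicity at every step; so your proof in fact covers a slightly larger class of weights. Accordingly, your closing remark that the hypothesis $v_n(a)\searrow 0$ is used ``at its sharpest'' should be softened: only the limit, not the decrease, enters your argument.
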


\begin{proof}
	Let $I\subseteq\N$ and $\delta>0$ be any real number satisfying $\overline{Bd}(I)<\delta$.
	By definition of the upper Banach density, there exists $p\geq1$ and $k_0\in\N$ such that for every $k\geq k_0$,
	\[\#(I\cap[k,k+p[)\leq\delta p.\]
	Consider any integer $m_0$ satisfying $m_0p\geq k_0$. Of
        course, the upper $a$-density of a set remains unchanged if
        one removes a finite number of elements. Then, we obtain
	\begin{align*}
	\overline{d}_a(I)&=\limsup_{n\to+\infty} \frac{\sum_{j=m_0p}^{n}a_j\indi{I}(j)}{\sum_{j=m_0p}^{n}a_j}\\
	&\leq \limsup_{m\to+\infty}\frac{\sum_{s=m_0}^{m}\sum_{j=sp}^{(s+1)p-1}a_j\indi{I}(j)}{\sum_{s=m_0}^{m-1}\sum_{j=sp}^{(s+1)p-1}a_j}\\
	&\leq\limsup_{m\to+\infty}\frac{\sum_{s=m_0}^{m}a_{(s+1)p-1}\sum_{j=sp}^{(s+1)p-1}\indi{I}(j)}{\sum_{j=m_0p}^{mp-1}a_j}\\
	&\leq \limsup_{m\to+\infty} \frac{\delta\sum_{s=m_0}^{m}pa_{(s+1)p-1}}{\sum_{j=m_0p}^{mp-1}a_j}.
	\end{align*}
	In order to avoid heavy computations, we know focus on the term lying in the sum above and we remark that by definition of $v_n(a)$, we have the following relations:
	\[a_n=v_n(a)\Big(\sum_{j=0}^{n-1}a_j\Big) \quad
          \text{and}\quad
          \Big(\sum_{j=0}^{n-1}a_j\Big)(1+v_n(a))=\sum_{j=0}^n a_j \, .\]
	 Therefore, keeping in mind that the sequence $(v_n(a))_{n\in\N}$ is decreasing and that $m_0\leq s\leq m$, we get
	\begin{align*}
	pa_{(s+1)p-1}&=
                       \sum_{l=0}^{p-1}a_{sp+l}\frac{a_{(s+1)p-1}}{a_{sp+l}}\\
& =   \sum_{l=0}^{p-1}a_{sp+l}\frac{v_{(s+1)p-1}(a)\sum_{j=0}^{(s+1)p-2}a_{j}}{a_{sp+l}}\\
	&=\sum_{l=0}^{p-1}a_{sp+l}\frac{v_{(s+1)p-1}(a)\left(\prod_{j=sp}^{(s+1)p-2}(1+v_j(a))\right)\sum_{j=0}^{sp-1}a_j}{a_{sp+l}}\\
	&\leq \sum_{l=0}^{p-1}a_{sp+l} v_{sp}(a)(1+v_{sp}(a))^{p-1} \frac{\sum_{j=0}^{sp-1}a_j}{a_{sp}}\\
	&=\sum_{l=0}^{p-1}a_{sp+l}(1+v_{sp}(a))^{p-1}\\
	&\leq (1+v_{m_0p}(a))^{p-1}\sum_{l=0}^{p-1}a_{sp+l}.
	\end{align*}
We deduce from this upper bound together with the previous inequality that
	\begin{align*}
	\overline{d}_a(I)&\leq \limsup_{m\to+\infty}\frac{\delta(1+v_{m_0p}(a))^{p-1}\sum_{j=m_0p}^{mp+p-1}a_j}{\sum_{j=m_0p}^{mp-1}a_j}\\
	&= \delta(1+v_{m_0p}(a))^{p-1}\left(1+\limsup_{m\to+\infty}\frac{\sum_{l=0}^{p-1}a_{mp+l}}{\sum_{j=m_0p}^{mp-1}a_j}\right)\\
	\end{align*}
where
	\begin{align*}
\limsup_{m\to+\infty}\frac{\sum_{l=0}^{p-1}a_{mp+l}}{\sum_{j=m_0p}^{mp-1}a_j}&=\limsup_{m\to+\infty}\frac{\sum_{l=0}^{p-1}a_{mp+l}}{\sum_{j=0}^{mp-1}a_j}\\
	&=\limsup_{m\to+\infty}\sum_{l=0}^{p-1}v_{mp+l}(a)\left(1+\sum_{s=0}^{l-1}v_{mp+s}(a)\prod_{j=0}^{s-1}(1+v_{mp+j}(a))\right)\\
	&\leq \limsup_{m\to+\infty}pv_{mp}(a)\left(1+pv_{mp}(a)(1+v_{mp}(a))^p\right)=0
	\end{align*}
	since $v_{mp}(a)\underset{m\to\infty}{\longrightarrow}0$ and $p$ is fixed from the beginning. We then obtain
	\[\overline{d}_a(I)\leq \delta(1+v_{m_0p}(a))^{p-1}.\]
	Moreover, this last inequality holds for any $m_0$ satisfying $m_0p\geq k_0$, hence making $m_0$ go to infinity, this yields $\overline{d}_a(I)\leq \delta$ where $\delta$ was any real number bigger than $\overline{Bd}(I)$. Hence, \[\overline{d}_a(I)\leq \overline{Bd}(I).\]
\end{proof}

\begin{figure}[!h]
	\centering

	\includegraphics[width=14cm]{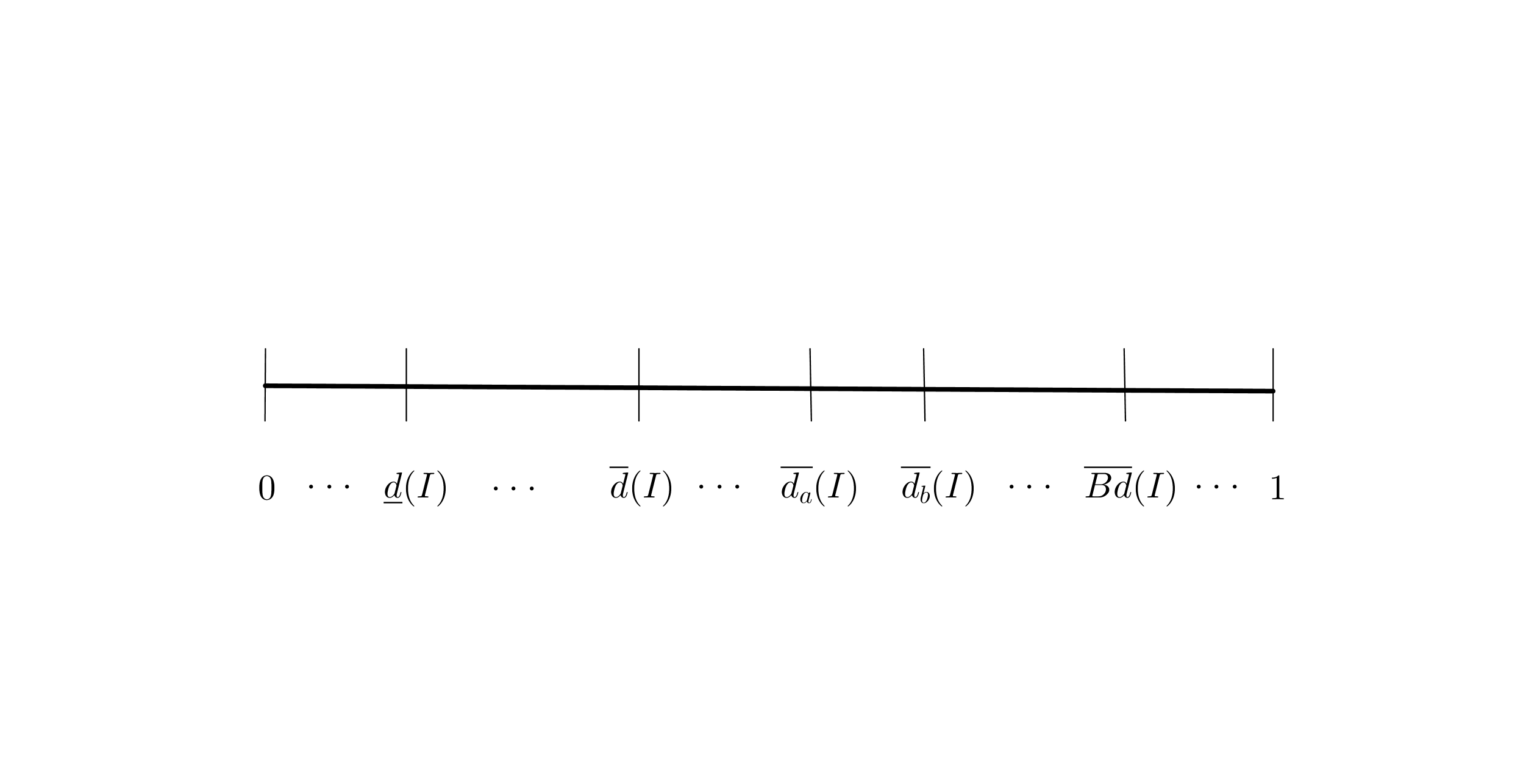}
	\caption{Ordering of densities when $\frac{a_n}{b_n}\searrow0$ and $a,b\in \SS$} 
	\label{Im1}
\end{figure}

As it is summarized in Figure \ref{Im1}, we have a natural ordering
between our weighted upper densities and the upper Banach density. In
addition, we can show that upper densities $\overline{d}_{a}$ with $a\in \SS$ allow to approach the upper Banach density.

\begin{proposition}\label{Prop3}
	Let $(A_n)_{n\in\N}$ be a sequence of sets of non-negative integers.
	Then, there exists $a\in\SS$ such that for every $n\in\N$
	\[\overline{Bd}(A_n)\leq e\cdot\overline{d}_a(A_n),\]
	where $e$ is Euler's number.
\end{proposition}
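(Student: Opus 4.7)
The plan is to construct, by a diagonal argument over the countable family $(A_n)_{n\in\N}$, a single weight sequence $a\in\SS$ whose plateaus of $v_n(a)$ are aligned with intervals witnessing the upper Banach density of each $A_n$. The factor $e$ will appear through $\lim_{L\to\infty}(1+1/L)^L$.

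First, I would discard the trivial case $\delta_n:=\overline{Bd}(A_n)=0$ and recall that for each $n$, every $\varepsilon>0$ and every $L_0$, there exist $L\geq L_0$ and arbitrarily large $p$ with $|A_n\cap[p,p+L-1]|\geq(\delta_n-\varepsilon)L$. I would then enumerate pairs $(n_k,\ell_k)\in\N\times\N$ so that every pair appears and $\ell_k\to\infty$, and inductively select a strictly increasing $(L_k)$ with $L_k\geq\ell_k$, and $p_k\geq p_{k-1}+L_k$, such that
\[|A_{n_k}\cap[p_k,p_k+L_k-1]|\geq(\delta_{n_k}-1/\ell_k)L_k.\]
The gap condition $p_{k+1}-p_k\geq L_{k+1}$ will leave room to interpolate the weight between plateaus.

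Next, I would define a non-decreasing integer-valued sequence $(m_n)$ with $m_{n+1}-m_n\in\{0,1\}$ by setting $m_n=L_k$ on each plateau $[p_k,p_k+L_k-1]$ and letting $m$ increase by $1$ per step on each intermediate block $[p_k+L_k,p_{k+1}-1]$, which fits since $p_{k+1}-p_k-L_k\geq L_{k+1}-L_k$. Setting $v_n(a):=1/m_n$ produces a non-increasing sequence tending to $0$. The identity $a_{n+1}/a_n=(1+1/m_n)\,m_n/m_{n+1}$ shows that the condition $m_{n+1}-m_n\leq 1$ is precisely what forces $(a_n)$ to be non-decreasing; moreover, on each plateau of length $L_k$ the partial sum $S_N:=\sum_{j\leq N}a_j$ multiplies by $(1+1/L_k)^{L_k}>1$, so $a_n\to\infty$, and therefore $a\in\SS$.

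The density computation is then direct. For any $n\in\N$ and any $k$ with $n_k=n$ (there are infinitely many such), set $N_k:=p_k+L_k-1$. Using $a_{p_k}=S_{p_k-1}/L_k$, $S_{N_k}=S_{p_k-1}(1+1/L_k)^{L_k}$ and the monotonicity of $a$ on $[p_k,N_k]$, one obtains
\[\frac{\sum_{j=0}^{N_k}a_j\indi{A_n}(j)}{S_{N_k}}\geq \frac{a_{p_k}\,|A_n\cap[p_k,N_k]|}{S_{N_k}}=\frac{|A_n\cap[p_k,N_k]|/L_k}{(1+1/L_k)^{L_k}}\geq\frac{\delta_n-1/\ell_k}{e},\]
the last inequality from $(1+1/L)^L<e$ for every $L\geq 1$. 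Taking $\limsup$ along this infinite subsequence yields $\overline{d}_a(A_n)\geq\delta_n/e$, i.e., $\overline{Bd}(A_n)\leq e\cdot\overline{d}_a(A_n)$, as announced.

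The main obstacle I anticipate is the simultaneous satisfaction of the two $\SS$-constraints during the diagonalization: monotonicity of $a$ enforces $m_{n+1}-m_n\leq 1$, which prevents abrupt jumps between plateau levels $L_k$ and $L_{k+1}$; this is resolved by exploiting the freedom to place each interval $[p_k,p_k+L_k-1]$ arbitrarily far out, comfortably satisfying $p_{k+1}-p_k\geq L_{k+1}$. The constant $e$ appears in a single clean step and the calibration $v_{p_k}(a)=1/L_k$ matched with a window of length $L_k$ is what makes it optimal.
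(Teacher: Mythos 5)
Your proposal is correct and follows essentially the same route as the paper: a single weight whose $v$-plateaus equal the reciprocal of the length of the witnessing Banach-density windows, with one infinite family of windows allotted to each $A_n$ and the constant $e$ arising from $(1+1/L)^{L}<e$ (the paper takes $v_j(a)=1/p$ on $[k_p,k_{p+1}[$ with a window of length $p$ at the start of the $p$-th block and a partition of the block indices in place of your enumeration of pairs, but this is the same mechanism, including the observation that $a$ stays non-decreasing exactly because $1/v$ increases by at most one per step). The only point to tighten is your justification that $a_n\to+\infty$: growth of the partial sums $S_N$ alone does not yield it, since $a_n=S_{n-1}/m_n$ with $m_n\to\infty$; instead note that within each plateau $a$ itself is multiplied by $(1+1/L_k)^{L_k-1}\geq 3/2$ once $L_k\geq 2$, which, combined with monotonicity, suffices.
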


\begin{proof}
	The inequality is trivially satisfied for integers $n$ so that
        $\overline{Bd}(A_n)=0$. Consequently, in what follows we will
        always assume that $\overline{Bd}(A_n)>0$. For every $n\in\N$,
        let $0<\delta_n<\overline{Bd}(A_n)$ and let $(I_n)_{n\in\N}$
        be a partition of $\N$ with $\# I_n=\infty$ for every $n \in \N$. Then, by definition of the upper Banach density, there exists an increasing sequence of integers $(k_p)_{p\in\N}$ such that
	\begin{enumerate}
		\item $k_{p+1}>k_p+p$ for every $p\in\N$,
		\item $\#\left(A_n\cap[k_p,k_p+p[\right)\geq p\cdot\delta_n$ if $p\in I_n$.
	\end{enumerate}
	We define our weight sequence $a$ by setting
	\[a_0=1\quad
          \text{and}\quad a_j=\frac{1}{p}\sum_{l=0}^{j-1}a_l\,\,\text{
            when }\, \, j\in[k_p,k_{p+1}[\]
	and we claim that this sequence has the properties we are looking for.
	We begin by proving that $a\in\SS$.
	Indeed,
	\[a_{k_p}=\frac{1}{p}\sum_{l=0}^{k_p-1}a_l=\frac{1}{p}\left(a_{k_p-1}+(p-1)a_{k_p-1}\right)=a_{k_p-1}.\]
	Moreover, if $k\in\, ]k_p,k_{p+1}[$ then
	\[a_k=\frac{1}{p}\sum_{l=0}^{k-1}a_l=\frac{1}{p}\left(a_{k-1}+pa_{k-1}\right)=\frac{p+1}{p}a_{k-1}\geq a_{k-1}.\]
	Thus, $a$ is non-decreasing. In addition, we have
	\[
a_{k_p}=a_{k_{p}-1} = \left(\frac{p}{p-1}\right)^{k_{p}-k_{p-1}}a_{k_{p-1}}\geq\frac{p}{p-1}a_{k_{p-1}}
\]
hence  by induction
$
a_{k_p} \geq p a_{k_{1}}
$
and it follows that
$a_{n}\underset{n\to\infty}{\longrightarrow}+\infty$. 
	Furthermore, by definition $v_{k}(a)=\frac{1}{p}$ if $k\in[k_p,k_{p+1}[$ and thus $(v_n(a))_{n\in\N}$ is non-increasing and tends to zero which proves that $a\in\SS$.
	Since, for every $n\in\N$, $\delta_n$ is any positive real
        number smaller than $\overline{Bd}(A_n)$, it only remains to
        prove that for every $n\in\N$, the inequality
        $\overline{d}_a(A_n)\geq \frac{\delta_n}{e}$ holds. One has
	\begin{align*}
	\overline{d}_a(A_n)&\geq \limsup_{p\in I_n} \frac{\sum_{l=0}^{k_p+p-1}a_l\indi{A_n}(l)}{\sum_{l=0}^{k_p+p-1}a_l}\\
	&\geq \limsup_{p\in I_n} \frac{a_{k_p}\sum_{l=k_p}^{k_p+p-1}\indi{A_l}(l)}{\sum_{l=0}^{k_p+p-1}a_l}\\
	&\geq \limsup_{p\in I_n} \frac{a_{k_p}\delta_n p}{\sum_{l=0}^{k_p+p-1}a_l}\\
	&= \limsup_{p\in I_n} \delta_n\frac{\sum_{l=0}^{k_p-1}a_l}{\sum_{l=0}^{k_p+p-1}a_l}\\
	&\geq\delta_n \limsup_{p\in I_n}\frac{\sum_{l=0}^{k_p-1}a_l}{\left(\frac{p+1}{p}\right)^p\sum_{l=0}^{k_p-1}a_l}\\
	&=\frac{\delta_n}{e}
	\end{align*}
hence the conclusion. 
\end{proof}

Of course, the inequality $\overline{Bd}(A)\leq e\cdot\overline{d}_a(A)$ is not satisfied for every set $A$.

\begin{proposition}\label{propnk}
For every $a\in \SS$, there exists an infinite set $A\subseteq \N$ such that $\overline{Bd}(A)=1$ and $\overline{d}_a(A)=0$.
\end{proposition}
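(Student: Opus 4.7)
The plan is to construct $A$ as a disjoint union of intervals whose lengths grow to infinity but whose positions are sparse enough (with respect to the weight $a$) to kill the $a$-density. Concretely, I would take
\[
A := \bigcup_{k \geq 1} [N_k,\, N_k + k - 1],
\]
with $N_k > N_{k-1} + k - 1$ chosen inductively. The equality $\overline{Bd}(A) = 1$ is then automatic, independently of how fast $N_k$ grows: for every $N \geq 1$, block $N_N$ has length $N_N \geq N$, so $\#(A \cap [N_N, N_N + N - 1]) = N$, giving $b_N = N$.

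The real work lies in forcing $\overline{d}_a(A) = 0$ through the choice of the $N_k$. Two features of $a \in \SS$ are available. First, since $a_n \to \infty$, the partial sums $S_n := \sum_{j=0}^n a_j$ tend to $\infty$, so one may require
\[
S_{N_{k-1} + k - 2} \leq \frac{1}{k}\, S_{N_k - 1}
\]
at each step, ensuring that the total $a$-mass of all earlier blocks is negligible. Second, since $v_n(a) \to 0$ and $v_n(a)$ is non-increasing, one may further demand
\[
v_{N_k}(a) \leq \frac{1}{k^2}.
\]
Both constraints can be met by taking $N_k$ sufficiently large at step $k$.

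With this setup, I would estimate, for $N \in [N_k, N_{k+1})$, the ratio $\bigl(\sum_{j \leq N,\, j \in A} a_j\bigr)/S_N$ by splitting the numerator into the contributions from the blocks of index $< k$ and from block $k$. The former is bounded by $S_{N_{k-1} + k - 2} \leq S_N/k$ by the first constraint. The latter is at most $k \cdot a_N$, and using the elementary identity $a_n/S_n = v_n(a)/(1+v_n(a)) \leq v_n(a)$ together with the monotonicity of $v_n(a)$ on $\SS$, this contributes at most $k \cdot v_{N_k}(a) \leq 1/k$ to the ratio. Summing the two pieces yields a bound $\leq 2/k$ uniform in $N \geq N_k$, so $\overline{d}_a(A) = 0$. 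The only delicate point is the calibration of the block length $L_k = k$: taking $L_k$ larger would make the current-block contribution $L_k v_{N_k}(a)$ harder to control, while keeping $L_k$ bounded would prevent $\overline{Bd}(A)$ from reaching $1$; the choice $L_k = k$ is exactly what the condition $v_{N_k}(a) \leq 1/k^2$ absorbs, and this is the balance the induction must strike.
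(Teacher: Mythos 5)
Your proof is correct and follows essentially the same route as the paper's: both split the partial ratio at a point inside (or after) the $k$-th block into the contribution of the earlier blocks, killed by letting $N_k$ grow so that $\sum_{j=0}^{N_k-1}a_j$ dominates, and the contribution of the current block, killed via $a_n/\sum_{j=0}^{n}a_j\leq v_n(a)$ together with the monotone decay of $(v_n(a))_{n\in\N}$ to $0$. Your version merely makes the inductive choice of $N_k$ explicitly quantitative (the conditions $S_{N_{k-1}+k-2}\leq S_{N_k-1}/k$ and $v_{N_k}(a)\leq 1/k^2$), which is a fine, if slightly more prescriptive, presentation of the same argument.
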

\begin{proof}
Let $a\in \SS$. Since for every increasing sequence $(n_k)_{k\in\N}$, $\overline{Bd}(\cup_{k\in\N}[n_k,n_k+k])=1$, it suffices to show that there exists an increasing sequence $(n_k)_{k\in\N}$ such that $\overline{d}_a(\cup_{k\in\N}[n_k, n_k+k])=0$ . We remark that for any
        increasing sequence $(n_k)_{k\in\N}$, we have for every $n \in [n_{k}, n_{k}+k]$
	\begin{align*}
\frac{\sum_{l=0}^{n}a_l\indi{\cup_{k\in\N}[n_k,n_k+k]}(l)}{\sum_{l=0}^{n}a_l}
&=\frac{\sum_{j=1}^{k-1}\sum_{l=0}^{j}a_{n_{j}+l}+ \sum_{l=n_{k}}^{n}a_{l}}{\sum_{l=0}^{n}a_l}\\
& \leq
  \frac{\sum_{j=1}^{k-1}\sum_{l=0}^{j}a_{n_{j}+l}}{\sum_{l=0}^{n_{k}}a_l}+\frac{\sum_{l=n_{k}}^{n}a_{l}}{\sum_{l=0}^{n}a_l}
          \\
&:= (I)+(II).
	\end{align*} 

	Thus, to prove our result it suffices to show that for every choice having already been made for the $k-1$ first terms of the sequence $(n_k)_{k\in\N}$, we are able to choose $n_{k}$ so that both $(I)$ and $(II)$ are arbitrarily small. 
	First, as $(a_n)_{n\in\N}$ belongs to $\SS$, it grows to infinity and then the term $(I)$ above can be made arbitrarily small by choosing a big enough $n_{k}$. For the second term $(II)$, remark that since $(a_n)_{n\in\N}$  is increasing and $(v_n(a))_{n\in\N}$ is decreasing we obtain
	\begin{align*}
	\frac{\sum_{l=n_{k}}^{n}a_{l}}{\sum_{l=0}^{n}a_l}&\leq (k+1)\frac{a_{n}}{a_{n}+\sum_{l=0}^{n-1}a_l}\\
	&=\frac{k+1}{1+(v_{n}(a))^{-1}}\\
	&\leq\frac{k+1}{1+(v_{n_{k}}(a))^{-1}}.
	\end{align*}
	
	Finally, this last expression can be made arbitrarily small by choosing $n_{k}$ big enough because $(v_n(a))_{n\in\N}$ decreases to zero.
\end{proof}

 We are now able to prove the main theorem of this section concerning the links between the notions of $\mathcal{U}$-frequent hypercyclicity,  $\mathcal{U}$-frequent hypercyclicity with respect to $a\in \SS$ and reiterative hypercyclicity. 

\begin{theorem}\label{TH_eq_UFHCa_RHCa}
	Let $T$ be a bounded operator on a Banach space $X$. Then:
\begin{enumerate}
	\item $T$ is $\mathcal{U}$-frequently hypercyclic if and only if $T$ is $\mathcal{U}$-frequently hypercyclic with respect to each $a\in\SS$.\label{eqTHUa1}
	\item $T$ is reiteratively hypercyclic if and only if $T$ is $\mathcal{U}$-frequently hypercyclic with respect to some $a\in\SS$.\label{eqTHUa2}
\end{enumerate}
Moreover, 
\[UFHC(T)=\cap_{a\in\SS}UFHC_a(T)\text{ and }RHC(T)=\cup_{a\in\SS}UFHC_a(T).\]
\end{theorem}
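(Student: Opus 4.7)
The plan is to reduce both equivalences, together with the set equalities $UFHC(T)=\cap_{a\in\SS}UFHC_a(T)$ and $RHC(T)=\cup_{a\in\SS}UFHC_a(T)$, to the three propositions and the lemma already proved. Since (1) and (2) are immediate consequences of the corresponding set equalities, I will focus on the latter. Throughout, the monotonicity of $\overline{d}_a$ (which follows from $\indi{A}\le\indi{B}$ whenever $A\subseteq B$) is used without comment.

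For $UFHC(T)=\cap_{a\in\SS}UFHC_a(T)$: the inclusion $\subseteq$ is precisely the observation already made after Proposition \ref{Prop1}, namely that applying Lemma \ref{lemErnst} with $b_{n}=1$ (so that $b_n/a_n=1/a_n$ is eventually decreasing to zero since $a_{n}\nearrow+\infty$) gives $\overline{d}\leq\overline{d}_{a}$ for every $a\in\SS$; hence any $UFHC$ vector is a $UFHC_{a}$ vector for every $a\in\SS$. For the converse, it is enough to specialize to a single weight: by Proposition \ref{Prop1} with $\alpha=1$, the sequence $a=(n)_{n\in\N}$ belongs to $\SS$ and satisfies $\overline{d}_a\le 2\,\overline{d}$, so any $x\in UFHC_{a}(T)$ for this particular $a$ already lies in $UFHC(T)$. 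In particular, $\cap_{a\in\SS}UFHC_{a}(T)\subseteq UFHC_{(n)_{n}}(T)\subseteq UFHC(T)$.

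For $RHC(T)=\cup_{a\in\SS}UFHC_{a}(T)$: the inclusion $\supseteq$ is immediate from Proposition \ref{Prop2}, since $\overline{d}_{a}(N(x,U))>0$ implies $\overline{Bd}(N(x,U))>0$. For the reverse inclusion, pick $x\in RHC(T)$ and a countable base $(V_{n})_{n\in\N}$ of the topology on $X$, which exists by separability. Reiterative hypercyclicity gives $\overline{Bd}(N(x,V_{n}))>0$ for every $n$, so Proposition \ref{Prop3} applied to the sequence of sets $(N(x,V_{n}))_{n\in\N}$ produces a single weight $a\in\SS$ with
\[
\overline{d}_{a}(N(x,V_{n}))\geq e^{-1}\,\overline{Bd}(N(x,V_{n}))>0
\quad\text{for every }n\in\N.
\]
For an arbitrary non-empty open $U\subseteq X$, pick $n$ with $V_{n}\subseteq U$; monotonicity of $\overline{d}_{a}$ yields $\overline{d}_{a}(N(x,U))\geq\overline{d}_{a}(N(x,V_{n}))>0$, so $x\in UFHC_{a}(T)$.

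The only genuinely nontrivial step is the last inclusion, which is essentially a diagonalization: a single weight $a\in\SS$ must serve every basic open set simultaneously. This is exactly what Proposition \ref{Prop3} is engineered to provide, and the reduction from all open sets to a countable basis crucially relies on the separability of $X$ to keep the family of return sets countable; the argument would not go through without this step.
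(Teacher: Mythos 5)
Your proof is correct and follows essentially the same route as the paper: $\overline{d}\le\overline{d}_a$ (via Lemma \ref{lemErnst} and $a_n\nearrow+\infty$) for one inclusion, Proposition \ref{Prop1} with a single polynomial weight for the converse, and Propositions \ref{Prop2} and \ref{Prop3} applied to the return sets of a countable base for the second equivalence. Only a cosmetic caveat: when invoking Lemma \ref{lemErnst} you label the constant sequence as ``$b$'' while the lemma's hypothesis and conclusion require it to play the role of ``$a$'' (the numerator of the ratio that decreases to zero); your stated ratio $1/a_n$ and conclusion $\overline{d}\le\overline{d}_a$ are nevertheless the right ones.
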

\begin{proof}
	We begin by the first statement (\ref{eqTHUa1}) of the theorem. The sufficiency is already known since $\overline{d}\leq \overline{d}_a$ for every $a\in\SS$ by Lemma~\ref{lemErnst} and in addition $UFHC(T)\subseteq\cap_{a\in\SS}UFHC_a(T)$.
	
	The necessary part is a consequence of Proposition \ref{Prop1} as there exists $b\in\SS$ such that both $\mathcal{U}$-frequent hypercyclicity and $\mathcal{U}$-frequent hypercyclicity with respect to $b$ are equivalent. In particular, if $T$ is $\mathcal{U}$-frequently hypercyclic for every $a\in\SS$ then $T$ is $\mathcal{U}$-frequently hypercyclic and $\cap_{a\in\SS}UFHC_a(T)\subseteq UFHC(T)$.
	
	For the statement (\ref{eqTHUa2}) of the theorem, the necessity is a direct consequence of Proposition \ref{Prop2} and $\cup_{a\in\SS} UFHC_a(T)\subseteq RHC(T)$.
	Moreover, by Proposition \ref{Prop3} if $x\in RHC(T)$ and if $(U_n)_{n\in\N}$ is a basis of open sets from $X$, then there exists $a\in\SS$ such that for every $n\in \N$,
	\[\overline{d}_a(N(x,U_n))\geq\frac{1}{e}\overline{Bd}(N(x,U_n))>0.\]
	Thus, $x$ is $\mathcal{U}$-frequently hypercyclic with respect to this particular $a$, hence 
	\[RHC(T)\subseteq \cup_{a\in\SS}UFHC_a(T)\]
	which proves the sufficient part and the theorem.
\end{proof}
\begin{remark}
Note that if $T$ is $\mathcal{U}$-frequently hypercyclic, we even have that $UFHC(T)=UFHC_{a}(T)$ for some $a\in \SS$.
\end{remark}

 In 2015, Bès, Menet, Peris and Puig \cite{Besmenperpuig} proved that if $T$ is reiteratively hypercyclic then
$RHC(T)=HC(T)$. It is worth noting that this result together with Theorem~\ref{TH_eq_UFHCa_RHCa}
 yields  the next corollary.

\begin{corollary}
	If $T$ is a reiteratively hypercyclic operator then
	\[HC(T)=\cup_{a\in\SS}UFHC_a(T).\]
\end{corollary}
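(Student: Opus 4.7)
The plan is to chain together the two results that the corollary explicitly advertises: on one hand, the characterization $RHC(T)=\cup_{a\in\SS}UFHC_a(T)$ just established in Theorem~\ref{TH_eq_UFHCa_RHCa}, and on the other hand the theorem of Bès, Menet, Peris and Puig \cite{Besmenperpuig} asserting that whenever $T$ is reiteratively hypercyclic one has $HC(T)=RHC(T)$. Putting these two equalities side by side immediately gives $HC(T)=\cup_{a\in\SS}UFHC_a(T)$.

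In more detail, I would argue the two inclusions separately, even though they are both essentially free. First, for $\cup_{a\in\SS}UFHC_a(T)\subseteq HC(T)$: if $x\in UFHC_a(T)$ for some $a\in\SS$, then for every non-empty open $U\subseteq X$ the return set $N(x,U)$ has $\overline{d}_a(N(x,U))>0$, so in particular $N(x,U)$ is non-empty (indeed infinite), which means $x\in HC(T)$. This inclusion does not even use the reiterative hypercyclicity assumption.

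For the reverse inclusion $HC(T)\subseteq \cup_{a\in\SS}UFHC_a(T)$, I would invoke the Bès--Menet--Peris--Puig theorem: under the standing hypothesis that $T$ is reiteratively hypercyclic, every hypercyclic vector is in fact reiteratively hypercyclic, i.e.\ $HC(T)\subseteq RHC(T)$. Then Theorem~\ref{TH_eq_UFHCa_RHCa} supplies the identification $RHC(T)=\cup_{a\in\SS}UFHC_a(T)$, and concatenating the two yields the desired inclusion.

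There is no real obstacle here: the substantive content has already been absorbed into Theorem~\ref{TH_eq_UFHCa_RHCa} (which itself rests on Propositions~\ref{Prop1}, \ref{Prop2} and \ref{Prop3}) and into the cited result from \cite{Besmenperpuig}. The only thing to verify is that one is entitled to apply the latter, which is immediate from the hypothesis of the corollary.
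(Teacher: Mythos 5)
Your proof is correct and follows exactly the paper's route: the corollary is obtained by combining the Bès--Menet--Peris--Puig identity $HC(T)=RHC(T)$ for reiteratively hypercyclic $T$ with the equality $RHC(T)=\cup_{a\in\SS}UFHC_a(T)$ from Theorem~\ref{TH_eq_UFHCa_RHCa}. The extra detail you give on the two inclusions is accurate but not needed beyond concatenating these two results.
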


Note also that for every $a\in \SS$, each set $UFHC_a(T)$ is either empty or comeager~\cite{BongrossUFHC}. However, we do not have in general that $UFHC_a(T)=HC(T)$.

\begin{proposition}
Let $B$ denote the backward shift on $\ell^{p}$, $1 \leq p < +
\infty$. 
	For every $a\in\SS$, $2B$ is $\mathcal{U}$-frequently hypercyclic with respect to $a$ but $UFHC_a(2B)\neq HC(2B)$.
\end{proposition}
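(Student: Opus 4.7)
The statement has two parts: ``$2B$ is $UFHC_a$ for every $a\in\SS$'' and ``$UFHC_a(2B)\neq HC(2B)$''.

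For the positive part, it is classical (cf.~\cite{Baygrihyp}) that $2B$ is frequently hypercyclic on $\ell^p$, hence $\mathcal{U}$-frequently hypercyclic. Item~(1) of Theorem~\ref{TH_eq_UFHCa_RHCa} then yields that $2B$ is $\mathcal{U}$-frequently hypercyclic with respect to every $a\in\SS$.

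For the separation, fix $a\in\SS$ and a dense sequence $(y_k)_{k\in\N}$ of finitely supported vectors in $\ell^p$, with $\operatorname{supp} y_k\subseteq [0,m_k]$. I would choose an increasing sequence $(N_k)_{k\in\N}$ with $N_{k+1}-N_k$ large enough so that (i) $\sum_{j>k} 2^{N_k-N_j}\|y_j\|_p\to 0$ as $k\to\infty$, (ii) the supports of $S^{N_k}y_k$ are pairwise disjoint (i.e.\ $N_{k+1}>N_k+m_k$), and (iii) $\overline{d}_a\bigl(\bigcup_{k\in\N} [N_k,N_k+m_k]\bigr)=0$. Condition~(iii) is obtained by a straightforward adaptation of the proof of Proposition~\ref{propnk}, which already constructs unions of intervals of prescribed lengths with $\overline{d}_a$-density zero, provided the gaps are stretched enough. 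Letting $S$ denote the forward shift, set
\[
x := \sum_{k\geq 0} 2^{-N_k} S^{N_k} y_k \in \ell^p.
\]
A direct computation gives $(2B)^{N_k} x = y_k + r_k$ with $\|r_k\|_p\leq \sum_{j>k}2^{N_k-N_j}\|y_j\|_p\to 0$; combined with the density of $(y_k)$, this shows $x\in HC(2B)$. On the other hand, for $V:=B(e_0,\tfrac14)$ and any $n\notin\bigcup_k[N_k,N_k+m_k]$, condition~(ii) gives $x_n=0$, so the zeroth coordinate of $(2B)^n x$ vanishes and $(2B)^n x\notin V$. Hence $N(x,V)\subseteq \bigcup_k[N_k,N_k+m_k]$, whence $\overline{d}_a(N(x,V))=0$ by~(iii), i.e.\ $x\notin UFHC_a(2B)$.

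The main obstacle is the simultaneous realization of~(i) and~(iii): the gaps $N_{k+1}-N_k$ must be large enough for the tail estimate and the approximation $(2B)^{N_k}x\approx y_k$, yet must also place the blocks $[N_k,N_k+m_k]$ along a sequence that is sparse enough, with respect to the specific weight $a$, for the union of blocks to have null $\overline{d}_a$-density. Both conditions pull in the same direction (larger gaps are better), so they are compatible; the only genuine work is re-running the argument of Proposition~\ref{propnk} with intervals of prescribed lengths $m_k+1$ in place of lengths $k+1$.
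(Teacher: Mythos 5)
Your proof is correct and takes essentially the same route as the paper: the positive part via frequent hypercyclicity of $2B$ (Bayart--Grivaux) combined with $\overline{d}\le\overline{d}_a$, and the separation via a hypercyclic vector supported on a $\overline{d}_a$-null union of intervals as in Proposition~\ref{propnk}, so that $N(x,B(e_0,\cdot))$ is trapped in the support. The only difference is cosmetic: you spell out the forward-shift construction $x=\sum_k 2^{-N_k}S^{N_k}y_k$ and rerun Proposition~\ref{propnk} with interval lengths $m_k+1$, whereas the paper uses that proposition as stated and leaves the construction of the supported hypercyclic vector implicit.
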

\begin{proof}
Let $a\in\SS$. We know thanks to Bayart and Grivaux \cite{Baygrihyp} that $2B$ is frequently hypercyclic and thus $\mathcal{U}$-frequently hypercyclic with respect to $a$. Moreover, by Proposition~\ref{propnk}, there exists an increasing sequence $(n_k)$ such that $\overline{d_a}(	\cup_{k\in\N}[n_k, n_k+k])=0$ and it is easy to see that we can construct a hypercyclic vector $x$ supported by  $\cup_{k\in\N}[n_k, n_k+k]$ i.e. which can be written as $x=\sum_{j\in\cup_{k\in\N}[n_k, n_k+k]}x_j e_j$.
	This vector, though, cannot be $\mathcal{U}$-frequently
        hypercyclic with respect to $a$ because
        $N(x,B(e_0,\frac{1}{2}))\subseteq\{j\in\N:\ 2^j\vert
        x_j\vert\geq \frac{1}{2}\}\subseteq \cup_{k\in\N}[n_k,n_k+k]$
        and this last set has zero upper $a$-density.	
\end{proof}

\section{Properties of the $n$-fold product of $\mathcal{A}$-hypercyclic operators}

This section aims at obtaining dynamical properties of the $n$-fold product of an
$\mathcal{A}$-hypercyclic operator, where $\mathcal{A}$ is an upper
Furstenberg family. Let us start by recalling the definition of this last notion.  

\begin{definition}\cite{BongrossUFHC}
\	A non-empty family $\AA$ of subsets of $\N$ is called a Furstenberg family $\AA$ if 
\begin{enumerate}
		\item $\emptyset\notin\AA$,
		\item $A\in\AA,\ A\subset B\ \implies\ B\in\AA$.
		\end{enumerate}
If, in addition,  $\AA$ can be written as $\AA=\cup_{\delta\in D}\AA_{\delta}$ with $\AA_{\delta}=\cap_{\mu\in M}\AA_{\delta,\mu}$,
		for some families $\AA_{\delta,\mu}$ ($\delta\in D,\ \mu\in M$), where $D$ is arbitrary and $M$ is countable and satisfies:
		\begin{enumerate}[(i)]
			\item for any $A\in\AA_{\delta,\mu}$, there exists a finite set $F\subset \N$ such that
			\[A\cap F\subset B\ \implies\ B\in\AA_{\delta,\mu},\]
			\item for any $A\in\AA$ there is some $\delta\in D$ such that for all $n\geq0$,
			\[A-n\in\AA_{\delta}.\]
		\end{enumerate}
then $\AA$ is said to be an upper Furstenberg family.
\end{definition}

This notion of Furstenberg family allows us to define some dynamical properties.

\begin{definition}\cite{Besmenperpuig}
	Let $\AA$ be a Furstenberg family and let $T$ be a bounded operator on a Banach space $X$. Then, $T$ is called $\AA$-hypercyclic if there exists a vector $x\in X$ such that for any non-empty open set $U\subseteq X$,
	$N(x,U)\in\AA$.
\end{definition}

\begin{example}\label{EX_A}
	This definition permits to recover some already known dynamical notions:
	\begin{enumerate}
		\item If $\AA_{ud}$ is the family of sets of positive
                  upper density, then $\AA_{ud}$-hypercyclicity
                  corresponds to $\mathcal{U}$-frequent
                  hypercyclicity. Moreover,
                  $\AA_{ud}=\cup_{\delta\in\R_{+}^{*}}\cap_{n\in\N}
                  \AA_{\delta,n}$ where $\AA_{\delta,n}:=\{A\subseteq
                  \N: \, \exists N\geq n \text{ such that } \frac{\#(A\cap [0,N])}{N+1}>\delta\}$.
		\item If $\AA_{uBd}$ is the family of sets of upper
                  Banach density, then $\AA_{uBd}$-hypercyclicity
                  corresponds to reiterative hypercyclicity. Moreover,
                  $\AA_{uBd}=\cup_{\delta\in\R_{+}^{*}}\cap_{n\in\N}~\AA_{\delta,n}$ where $\AA_{\delta,n}:=\{A\subseteq
                  \N: \,  \exists m\geq 0 \text{ such that } \frac{\#(A\cap [m,m+n])}{n+1}>\delta\}$.
		\item Let $a\in \SS$. If $\AA_{ud_{a}}$ is the family of
                  sets of positive upper density with respect to $a$,
                  then $\AA_{ud_{a}}$-hypercyclicity corresponds to
                  $\mathcal{U}$-frequent hypercyclicity with respect
                  to $a$. Moreover,
                  $\AA_{ud_{a}}=\cup_{\delta\in\R_{+}^{*}}\cap_{n\in\N}
                  \AA_{\delta,n}$ where $\AA_{\delta,n}:=\{A\subseteq
                  \N: \, \exists N\geq n\text{ such that } \frac{\sum_{j=0}^{N}a_j\indi{A}(j)}{\sum_{j=0}^{N}a_j}>\delta\}$.
	\end{enumerate}
	Each of these families is an upper Furstenberg family.
\end{example}

One of the main features of upper Furstenberg families is that they
give rise to dynamical properties that satisfy an analogue of the
Birkhoff transitivity theorem. This tool is central and will be one of
the main ingredients in the following proofs.

\begin{theorem}[Birkhoff Theorem for upper Furstenberg families \cite{BongrossUFHC}]\label{TH_Birkhoff_UFF}
	Let $T$ be a bounded operator on a Banach space $X$ and $\AA:=\cup_{\delta\in D}\AA_{\delta}$ be an upper Furstenberg family. Then, the following are equivalent:
	\begin{enumerate}
		\item $T$ is $\AA$-hypercyclic;
		\item For any non-empty open subset $V\subseteq X$, there exists $\delta\in D$ such that for any non-empty open subset $U\subseteq X$, there is some $x\in U$ such that
		\[N(x,V)\in\AA_{\delta}.\]
	\end{enumerate}
\end{theorem}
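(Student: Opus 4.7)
The plan is to prove the two implications separately. The forward direction relies only on property (ii) of an upper Furstenberg family, while the converse is a Baire-category construction that exploits property (i) in an essential way.

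\textbf{Direction (1)$\Rightarrow$(2).} Suppose $y$ is $\AA$-hypercyclic. Given any non-empty open $V\subseteq X$, we have $N(y,V)\in\AA$, so property (ii) yields a $\delta\in D$ (depending only on $V$) such that $N(y,V)-n\in\AA_{\delta}$ for every $n\geq0$. Now let $U\subseteq X$ be any non-empty open set. Since $\emptyset\notin\AA$, the vector $y$ is in particular hypercyclic, so some iterate $T^{k}y$ lies in $U$. Setting $x:=T^{k}y$, one checks directly that $N(x,V)=N(y,V)-k$, hence $N(x,V)\in\AA_{\delta}$, proving (2).

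\textbf{Direction (2)$\Rightarrow$(1).} Decompose $\AA_{\delta}=\bigcap_{\mu\in M}\AA_{\delta,\mu}$ with $M$ countable, and fix a countable basis $(V_{m})_{m\in\N}$ of the topology of $X$. By (2), for each $m$ we may pick $\delta_{m}\in D$ such that every non-empty open $U$ contains some $x$ with $N(x,V_{m})\in\AA_{\delta_{m}}$. For each pair $(m,\mu)\in\N\times M$ I set
\[
W_{m,\mu}\;:=\;\bigcup_{F\in\mathcal{F}_{m,\mu}}\;\bigcap_{n\in F}T^{-n}(V_{m}),
\]
where $\mathcal{F}_{m,\mu}$ is the collection of finite subsets $F\subseteq\N$ enjoying the hereditary property that $F\subseteq B$ implies $B\in\AA_{\delta_{m},\mu}$. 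By construction $W_{m,\mu}$ is open, and any $x\in W_{m,\mu}$ satisfies $F\subseteq N(x,V_{m})$ for some $F\in\mathcal{F}_{m,\mu}$, whence $N(x,V_{m})\in\AA_{\delta_{m},\mu}$.

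The crux is to show that each $W_{m,\mu}$ is dense. Given a non-empty open $U$, choose $x_{0}\in U$ with $N(x_{0},V_{m})\in\AA_{\delta_{m}}\subseteq\AA_{\delta_{m},\mu}$. By property (i), pick a finite $G\subseteq\N$ such that $N(x_{0},V_{m})\cap G\subseteq B$ forces $B\in\AA_{\delta_{m},\mu}$. Then $F:=N(x_{0},V_{m})\cap G$ belongs to $\mathcal{F}_{m,\mu}$, and $x_{0}\in\bigcap_{n\in F}T^{-n}(V_{m})\subseteq W_{m,\mu}$, so $U\cap W_{m,\mu}\neq\emptyset$. Since $\N\times M$ is countable, Baire's theorem provides an $x\in\bigcap_{m,\mu}W_{m,\mu}$; for this $x$ and any non-empty open $V\subseteq X$, choose $m$ with $V_{m}\subseteq V$, so that $N(x,V_{m})\in\bigcap_{\mu}\AA_{\delta_{m},\mu}=\AA_{\delta_{m})}\subseteq\AA$, and upward closure of $\AA$ yields $N(x,V)\in\AA$.

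The main obstacle is the density step: one has to pass from the infinite, non-open condition ``$N(x_0,V_m)\in\AA_{\delta_m,\mu}$'' to a finite, open condition ``$x\in\bigcap_{n\in F}T^{-n}(V_m)$'' that still forces membership in $\AA_{\delta_m,\mu}$. This is exactly what axioms (i) and the countability of $M$ in the definition of an upper Furstenberg family are designed for, so the whole argument hinges on invoking (i) with a carefully chosen witness $F$ depending on the point $x_0$ supplied by hypothesis (2).
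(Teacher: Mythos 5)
Your proof is correct. Note that the paper does not prove this theorem at all: it is quoted verbatim from Bonilla and Grosse--Erdmann \cite{BongrossUFHC}, so there is no in-paper argument to compare against; your two-step argument (translating the return set of a single $\AA$-hypercyclic vector via axiom (ii) for the forward direction, and the Baire-category construction that converts membership in $\AA_{\delta_m,\mu}$ into the open condition $x\in\bigcap_{n\in F}T^{-n}(V_m)$ via axiom (i) for the converse) is precisely the standard proof from that reference. The only blemishes are cosmetic: the stray parenthesis in ``$\AA_{\delta_{m})}$'' and the implicit use of separability of $X$ (needed for the countable basis $(V_m)$, and assumed throughout the paper) which you could flag explicitly.
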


In what follows, we will work with the notion of upper Furstenberg family and obtain, as a corollary, results for $\mathcal{U}$-frequent hypercyclicity with respect to any $a\in\SS$, $\mathcal{U}$-frequent hypercyclicity and reiterative hypercyclicity.
We start by showing that under some conditions on $\AA$, the n-fold product of an $\AA$-hypercyclic operator is still $\AA$-hypercyclic.

\begin{theorem}\label{TH_AHC_PROD}
	Let $\AA$ be an upper Furstenberg family satisfying
	\begin{enumerate}
		\item $\forall \delta'\in D,\ \exists \delta\in D,\ \forall k\geq0$:
		\[A\in \AA_{\delta'}\implies A-k\in\AA_{\delta}.\]\label{Hyp1}
		\item $\forall A\in\AA,\ \exists \delta\in D$:
		\[\{k\geq0:\ A\cap(A-k)\in\AA_{\delta}\}\text{ has bounded gaps.}\] 
	\end{enumerate}
	If $T$ is $\AA$-hypercyclic then $T\times T$ is $\AA$-hypercyclic.
\end{theorem}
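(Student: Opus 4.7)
The strategy is to verify, via Theorem~\ref{TH_Birkhoff_UFF} applied to $T\times T$ and the family $\AA$, that for every non-empty open set $V_1\times V_2\subseteq X\times X$ one can exhibit some $\delta\in D$ (depending only on $V_1,V_2$) with the following property: for every non-empty open $U_1\times U_2\subseteq X\times X$, there is a pair $(x_1,x_2)\in U_1\times U_2$ with $N((x_1,x_2),V_1\times V_2)=N(x_1,V_1)\cap N(x_2,V_2)\in\AA_\delta$.

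I would first build the $V_1,V_2$-dependent data. Topological transitivity of $T$ (implied by $\AA$-hypercyclicity) lets me pick $j_0\geq 0$ with $V_0:=V_1\cap T^{-j_0}V_2\neq\emptyset$. Theorem~\ref{TH_Birkhoff_UFF} applied to $T$ and $V_0$ yields some $\delta_0\in D$; using that condition~(i) makes each $\{z:N(z,V_0)\in\AA_{\delta_0,\mu}\}$ open, a Baire argument produces a specific $\AA$-hypercyclic vector $y$ with $A:=N(y,V_0)\in\AA_{\delta_0}$, and by the choice of $V_0$ one has $A\subseteq N(y,V_1)$ and $A+j_0\subseteq N(y,V_2)$. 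Applying condition~(2) to $A$ gives $\delta^*\in D$ such that $B:=\{k\geq 0:A\cap(A-k)\in\AA_{\delta^*}\}$ has bounded gaps, and applying condition~(1) to $\delta^*$ gives the desired $\delta\in D$ for which $C-m\in\AA_\delta$ whenever $C\in\AA_{\delta^*}$ and $m\geq 0$. None of $j_0,y,A,B,\delta^*,\delta$ depends on $U_1,U_2$.

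Given $U_1,U_2$, I would seek a witness of the form $(x_1,x_2)=(T^m y,\,T^{m+k_0+j_0}y)$ for some $m\geq 0$ and $k_0\in B$. A direct calculation using $N(T^\ell y,W)=N(y,W)-\ell$ and the inclusions above gives
\[N((x_1,x_2),V_1\times V_2)\supseteq (A\cap(A-k_0))-m,\]
whose right-hand side lies in $\AA_\delta$ by the choice of $B$ and $\delta$; hereditariness of each $\AA_{\delta,\mu}$ (which is immediate from condition~(i)) then forces the full intersection $N((x_1,x_2),V_1\times V_2)$ into $\AA_\delta$. The remaining task is to pick $m,k_0$ with $T^m y\in U_1$ and $T^{m+k_0+j_0}y\in U_2$, i.e.\ $T^m y\in U_1\cap T^{-(k_0+j_0)}U_2$; once $k_0\in B$ is chosen so that this open set is non-empty, hypercyclicity of $y$ supplies $m$. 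The expected main obstacle is the compatibility question: exhibit $k_0\in B$ making $T^{k_0+j_0}U_1\cap U_2\neq\emptyset$. I would handle this by noting that $\{k:T^kU_1\cap U_2\neq\emptyset\}$ is an $\AA$-set (it contains $N(z,U_2)$ for a vector $z\in U_1$ produced by Theorem~\ref{TH_Birkhoff_UFF} applied to $T$ and $U_2$), and arguing that under hypotheses~(1) and~(2) this $\AA$-set is forced to meet the syndetic set $B+j_0$, the cleanest route being via the fact that those hypotheses make $T$ weakly mixing, which renders $\{k:T^kU_1\cap U_2\neq\emptyset\}$ thick so that the intersection with a bounded-gap set is automatic. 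Executing this compatibility step carefully is the technical core of the argument; the rest is routine bookkeeping using Theorem~\ref{TH_Birkhoff_UFF} and the upper-Furstenberg-family axioms.
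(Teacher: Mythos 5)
Your proposal is correct and follows essentially the same route as the paper's proof: the Birkhoff-type characterization (Theorem~\ref{TH_Birkhoff_UFF}), the same witnesses $T^{m}y$ and $T^{m+k_0+j_0}y$, the same inclusion $N(x_1,V_1)\cap N(x_2,V_2)\supseteq \bigl(A\cap(A-k_0)\bigr)-m$, and the same resolution of the compatibility step by intersecting the thick set $N(U_1,U_2)$ with the bounded-gaps set $B+j_0$. The only piece you assert rather than prove is that the hypotheses force $T$ to be weakly mixing; the paper's argument for this is the short observation that for $A=N(x,U\cap T^{-n}V)$ one has $A-A+n\subseteq N(U,V)$, so hypothesis~(2) gives every $N(U,V)$ bounded gaps, which yields weak mixing by the Grosse-Erdmann--Peris criterion.
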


\begin{proof}
Let us first remark that the operator $T$ is weakly mixing, i.e
$T\times T$ is hypercyclic. Let $U,V\subseteq X$ be non-empty open
sets and let $x$ be  an $\AA$-hypercyclic vector for $T$. Since $T$ is
$\AA$-hypercyclic and thus hypercyclic, the set $N(U,V)= \{ n \in \N :
U \cap T^{-n}(V) \neq \emptyset\}$ is not empty. We can then consider $n\in N(U,V)$ and let $U_n=U\cap T^{-n}(V)$ which is a non-empty open set. Therefore, $N(x,U_n)\in \AA$ and by definition of $U_n$, we have
\[N(x,U_n)-N(x,U_n)+n\subset N(U,V)\]
because if $n_1,n_2\in N(x,U_n)$ then $T^{n_2}x\in U$ and $T^{n_1-n_2+n}(T^{n_2}x)=T^{n_1+n}x\in V$.
Let $A=N(x,U_n)$. By assumption, $A-A=\{k\geq 0:\ A\cap(A-k)\ne \emptyset\}$ has bounded gaps and thus $N(U,V)$ has also bounded gaps. It follows from \cite{Gropeweakmix} that $T$ is weakly mixing.

Let $V_1,V_2\subseteq X$ be any non-empty open sets. We will now apply Theorem \ref{TH_Birkhoff_UFF} in order to prove that $T\times T$ is $\AA$-hypercyclic. In other words, we want to find some $\delta\in D$ such that for any non-empty open subsets $U_1,U_2\subseteq X$, there exist $x_1\in U_1$ and $x_2\in U_2$ such that $N(x_1,V_1)\cap N(x_2,V_2)\in\AA_{\delta}$.
	
	Since $T$ is hypercyclic by hypothesis, there exists $n_0\in\N$ such that ${T^{n_0}(V_1)\cap V_2\neq\emptyset}$. Thus, by continuity of $T$, we can find a non-empty open subset $\tilde{V_1}\subseteq V_1$ so that
	\[T^{n_0}(\tilde{V_1})\subseteq V_2.\]
	
	Now, let $y$ be a $\AA$-hypercyclic vector for $T$. Then, by hypothesis, there exists $\delta'\in D$ such that
	\[\KK:=\big\{k\geq0:\ N(y,\tilde{V_1})\cap\left(N(y,\tilde{V_1})-k\right)\in\AA_{\delta'}\big\}\text{ has bounded gaps}.\]
	Since we have fixed $\delta'\in D$, let us define now
        $\delta\in D$ thanks to hypothesis (\ref{Hyp1}). Moreover, since $T$ is weakly-mixing, we know that
        $N(U_1,U_2)$ contains arbitrarily long intervals and since $\KK$
        has bounded gaps, it follows that  there exists $k\in N(U_1,U_2)\cap(\KK+n_0)$, which writes
	\[T^k(U_1)\cap U_2\neq\emptyset\text{ and }N(y,\tilde{V_1})\cap \left(N(y,\tilde{V_1})-(k-n_0)\right)\in\AA_{\delta'}.\]
	There again the continuity of $T$ gives the existence of a non-empty open subset $\tilde{U_1}\subseteq U_1$ such that $T^k(\tilde{U_1})\subseteq U_2$ and we consider $n_1\in N(y,\tilde{U_1})$.
	We claim now that if we let $x_1=T^{n_1}y$ and
        $x_2=T^{n_1+k}y$, then we can prove the desired
        property. Indeed, first since $n_1\in N(y,\tilde{U_1})$, then
        $x_1=T^{n_1}y\in\tilde{U_1}\subseteq U_1$ and since $T^{k}\tilde{U_1}\subseteq U_2$, $x_2=T^{k}x_{1}\in U_2$. Remark also that 
	\[N(x_1,V_1)=N(y,V_1)-n_1\text{ and }N(x_2,V_2)=N(y,V_2)-n_1-k.\] Furthermore, we can  derive $n_0+N(y,\tilde{V_1})\subseteq N(y,V_2)$, hence we obtain
	\begin{align*}
		N(x_1,V_1)\cap N(x_2,V_2)&=\left(N(y,V_1)-n_1\right)\cap\left(N(y,V_2)-n_1-k\right)\\
		&\supseteq \left(N(y,\tilde{V_1})-n_1\right)\cap\left(N(y,\tilde{V_1})+n_0-n_1-k\right)\\
		&=\left(N(y,\tilde{V_1})\cap\left(N(y,\tilde{V_1})-(k-n_0)\right)\right)-n_1.
	\end{align*}
	Thus, by definition of $\delta$, we get that $\left(N(y,\tilde{V_1})\cap\left(N(y,\tilde{V_1})-(k-n_0)\right)\right)-n_1\in\AA_{\delta}$ which yields that $N(x_1,V_1)\cap N(x_2,V_2)$ itself belongs to $\AA_{\delta}$ by definition of upper Furstenberg families.	
\end{proof}

In the previous result, we obtain the $\AA$-hypercyclicity of the 2-fold product $T\times T$. Of course, this is not specific to 2 and can be done for any $n\geq1$ as the following corollary shows. 

\begin{corollary}\label{Cor_AHC_nPROD}
	Let $\AA$ be an upper Furstenberg family satisfying
	\begin{enumerate}
		\item $\forall \delta'\in D,\ \exists \delta\in D,\ \forall k\geq0$:
		\[A\in \AA_{\delta'}\implies A-k\in\AA_{\delta}.\]
		\item $\forall A\in\AA,\ \exists \delta\in D$:
		\[\{k\geq0:\ A\cap(A-k)\in\AA_{\delta}\}\text{ has bounded gaps.}\] 
	\end{enumerate}
If $T$ is $\AA$-hypercyclic, then for every $n\geq1$, the $n$-fold product $\underbrace{T\times\cdots\times T}_{n\text{ times}}$ is $\AA$-hypercyclic. 
\end{corollary}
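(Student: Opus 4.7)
The plan is to proceed by induction on $n$, the base case $n=1$ being simply the hypothesis on $T$. For the inductive step, I would assume that the $n$-fold product $T^{(n)} := T \times \cdots \times T$ is $\AA$-hypercyclic and deduce the same for $T^{(n+1)}$. The strategy is to invoke Theorem \ref{TH_AHC_PROD} with the operator $T^{(n)}$ in place of $T$, viewing it as a bounded linear operator on the Banach space $X^n$. The crucial observation is that the two conditions imposed on $\AA$ in Theorem \ref{TH_AHC_PROD} are intrinsic to the family $\AA$ itself and make no reference to the operator, so they continue to hold unchanged. Since $T^{(n)}$ is $\AA$-hypercyclic by the induction hypothesis, Theorem \ref{TH_AHC_PROD} then yields that $T^{(n)} \times T^{(n)} = T^{(2n)}$ is $\AA$-hypercyclic.

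It remains to pass from $T^{(2n)}$ being $\AA$-hypercyclic to $T^{(n+1)}$ being $\AA$-hypercyclic, which is legitimate since $n+1 \le 2n$ whenever $n \ge 1$. For this I would rely on a simple projection argument: if $(x_1, \ldots, x_{2n})$ is $\AA$-hypercyclic for $T^{(2n)}$, then I claim $(x_1, \ldots, x_{n+1})$ is $\AA$-hypercyclic for $T^{(n+1)}$. Indeed, given any non-empty open sets $V_1, \ldots, V_{n+1} \subseteq X$, the product $W := V_1 \times \cdots \times V_{n+1} \times X \times \cdots \times X$ is a non-empty open subset of $X^{2n}$, and the coordinates beyond the $(n+1)$-th impose no constraint, so
\[
N\bigl((x_1, \ldots, x_{n+1}),\, V_1 \times \cdots \times V_{n+1}\bigr) = N\bigl((x_1, \ldots, x_{2n}),\, W\bigr).
\]
By hypothesis the right-hand side belongs to $\AA$, which closes the induction.

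I expect essentially no obstacle here: once Theorem \ref{TH_AHC_PROD} is in hand, the corollary is bookkeeping. The only conceptual point worth stressing is that the two conditions on $\AA$ in Theorem \ref{TH_AHC_PROD} are operator-independent, which is exactly what permits iterating the result on successively larger product operators. An equivalent alternative would be to establish by induction on $k$ that $T^{(2^k)}$ is $\AA$-hypercyclic for every $k \ge 0$ and then, for arbitrary $n$, project an $\AA$-hypercyclic vector of some $T^{(2^k)}$ with $2^k \ge n$ onto its first $n$ coordinates; the step-by-step $n \to n+1$ formulation above seems cleaner and isolates the use of Theorem \ref{TH_AHC_PROD} to a single invocation per inductive step.
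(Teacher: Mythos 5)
Your proof is correct and follows essentially the same route as the paper: iterate Theorem \ref{TH_AHC_PROD} on product operators (noting that the hypotheses on $\AA$ are operator-independent) and then pass to smaller products via the projection argument, which is exactly the paper's use of the upward-hereditary property of $\AA$. Indeed, the ``equivalent alternative'' you mention at the end (reaching $2^k$-fold products and projecting onto the first $n$ coordinates) is precisely the proof given in the paper.
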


\begin{proof}
By applying several times Theorem \ref{TH_AHC_PROD}, we get that for every $n\geq 1$, $\underbrace{T\times\cdots\times T}_{2^n\text{ times}}$ is $\AA$-hypercyclic. Finally, $\AA$ being a Furstenberg family, it is in particular hereditarily upward so $\underbrace{T\times\cdots\times T}_{n\text{ times}}$ is $\AA$-hypercyclic for any $n\geq1$.	
\end{proof}

This general result being proved, we can focus on some applications by
fixing the family $\AA$. 
%

 We begin by considering the family of sets of positive upper density.

\begin{corollary}
	If $T$ is $\mathcal{U}$-frequently hypercyclic, then for any $n\geq1$, the $n$-fold product $T\times\cdots\times T$ is $\mathcal{U}$-frequently hypercyclic.
\end{corollary}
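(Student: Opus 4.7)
The plan is to derive this corollary as a direct application of Corollary~\ref{Cor_AHC_nPROD} to the upper Furstenberg family $\AA_{ud}$ of sets of positive upper density described in Example~\ref{EX_A}(1), whose associated $\AA$-hypercyclicity coincides with $\mathcal{U}$-frequent hypercyclicity. In this setting $D=\R_{+}^{*}$ and, up to the countable intersection over $n$ built into $\AA_{\delta}=\cap_{n}\AA_{\delta,n}$, the condition $A\in\AA_{\delta}$ essentially requires $\overline{d}(A)\geq\delta$.

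The first hypothesis of Corollary~\ref{Cor_AHC_nPROD} is immediate from the translation invariance of the upper density: $\overline{d}(A-k)=\overline{d}(A)$ for every $k\geq 0$, so $A\in\AA_{\delta'}$ entails $A-k\in\AA_{\delta'}$, and taking $\delta=\delta'$ suffices. The substantive step is the verification of the second hypothesis. Given $A\subseteq\N$ with $\alpha:=\overline{d}(A)>0$, the natural tool is Khintchine's recurrence theorem combined with Furstenberg's correspondence principle: these provide a measure-preserving system $(X,\mu,S)$ and a measurable set $B\subseteq X$ with $\mu(B)=\alpha$ such that $\overline{d}(A\cap(A-k))\geq\mu(B\cap S^{-k}B)$ for every $k\geq 0$, while Khintchine's theorem yields that the set $\{k\geq 0:\mu(B\cap S^{-k}B)>\alpha^{2}/2\}$ is syndetic. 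Transporting this bound back to $\N$ gives the required bounded gaps for $\{k\geq 0:\overline{d}(A\cap(A-k))>\alpha^{2}/2\}$, so hypothesis~(2) holds with $\delta=\alpha^{2}/2$.

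With both conditions verified, Corollary~\ref{Cor_AHC_nPROD} immediately produces the $\mathcal{U}$-frequent hypercyclicity of the $n$-fold product for every $n\geq 1$. The main obstacle lies precisely in the verification of hypothesis~(2): weaker recurrence statements, such as the classical syndeticity of $A-A$ when $\overline{d}(A)>0$, only ensure that $A\cap(A-k)$ is non-empty along a syndetic set of indices $k$, which is not strong enough to force intersections of positive upper density as required by the family $\AA_{ud}$. It is precisely the quantitative recurrence given by Khintchine's theorem that bridges this gap.
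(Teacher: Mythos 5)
Your proposal is correct and follows the same overall strategy as the paper: both reduce the statement to Corollary~\ref{Cor_AHC_nPROD} applied to the upper Furstenberg family $\AA_{ud}$ of Example~\ref{EX_A}. The only genuine difference is how hypothesis~(2) is justified. The paper simply cites Bayart and Ruzsa \cite{Bayru} for the fact that $A\in\AA_{\delta}$ implies that $\{k\geq 0:\ A\cap(A-k)\in\AA_{\delta^2/2}\}$ has bounded gaps, whereas you rederive this via Furstenberg's correspondence principle together with Khintchine's recurrence theorem. Your route is self-contained modulo standard ergodic theory and makes transparent \emph{why} the quantitative lower bound $\delta^2/2$ appears (it is the Khintchine bound $\mu(B)^2-\varepsilon$), while the paper's citation is shorter and is consistent with its treatment of the weighted case, where Lemma~\ref{lem:2} reproves the analogous statement by a second-moment (Cauchy--Schwarz/Jensen) argument in the spirit of \cite{Bayru} rather than by correspondence. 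Your closing remark that mere syndeticity of $A-A$ would not suffice is accurate and is precisely the point of using the quantitative recurrence.

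One small imprecision: in hypothesis~(1) you claim that taking $\delta=\delta'$ suffices. Because membership in $\AA_{\delta',n}$ is defined by a \emph{strict} inequality along infinitely many $N$, translation invariance of $\overline{d}$ only guarantees $\overline{d}(A-k)\geq\delta'$, which need not put $A-k$ back into $\AA_{\delta'}$ (the partial ratios of $A-k$ could approach $\delta'$ from below). This is why the paper takes any $\delta$ with $0<\delta<\delta'$, which is permitted by the quantifiers in Corollary~\ref{Cor_AHC_nPROD}; the fix is immediate and does not affect the rest of your argument.
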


\begin{proof}
	It suffices to show that conditions of Corollary \ref{Cor_AHC_nPROD} are satisfied by the upper Furstenberg family of sets of positive upper density $\AA_{ud}$ defined in Example \ref{EX_A}. We recall that, in this case,
	\[D=\R_{+}^{*}\quad \text{ and }\quad
          \AA_{\delta,n}=\{A\subseteq \N: \, \exists N\geq n\text{
            such that } \frac{\#(A\cap [0,N])}{N+1}>\delta\}.\]	
It is easily seen that for every $k\geq0$, every $\delta'>\delta>0$, $A\in\AA_{\delta'}\ \implies\ A-k\in\AA_{\delta}$. Moreover, Bayart and Ruzsa \cite{Bayru} proved that if $A\in\AA_{\delta}$ then $\{k\geq 0:\ A\cap(A-k)\in\AA_{\frac{\delta^2}{2}}\}$ has bounded gaps. Hence, the conditions of Corollary \ref{Cor_AHC_nPROD} are fulfilled.
\end{proof}

We can also apply Corollary \ref{Cor_AHC_nPROD} to the family of sets having positive upper Banach density.

\begin{corollary}
	If $T$ is reiteratively hypercyclic, then for any $n\geq1$, the $n$-fold product $T\times\cdots\times T$ is reiteratively hypercyclic.
\end{corollary}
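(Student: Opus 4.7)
The plan is to apply Corollary~\ref{Cor_AHC_nPROD} to the upper Furstenberg family $\AA_{uBd}$ of sets of positive upper Banach density, whose decomposition
\[D = \R_{+}^{*}, \qquad \AA_{\delta,n} = \Big\{A \subseteq \N : \exists\, m \geq 0,\ \frac{\#(A \cap [m, m+n])}{n+1} > \delta\Big\}\]
is recorded in Example~\ref{EX_A}. Since $\AA_{uBd}$-hypercyclicity is exactly reiterative hypercyclicity, it suffices to verify the two hypotheses of the corollary for $\AA_{uBd}$, which will mirror the argument given just above for $\AA_{ud}$.

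For hypothesis (1), I would simply take $\delta = \delta'$. Indeed, if $A \in \AA_{\delta'}$, then for each $n \in \N$ there is some $m \geq 0$ with $\#(A \cap [m, m+n])/(n+1) > \delta'$; and since the quantity $b_n$ entering the definition of $\overline{Bd}$ is a $\limsup$ over $k$, such an $m$ can be chosen arbitrarily large, in particular with $m \geq k$. Shifting then gives $\#((A - k) \cap [m-k, m-k+n])/(n+1) > \delta'$, so $A - k \in \AA_{\delta', n}$ for every $n$, i.e. $A - k \in \AA_{\delta'}$.

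For hypothesis (2), which is the real content, I would invoke a Khintchine-type recurrence statement for the upper Banach density, playing the same role as the Bayart--Ruzsa estimate in the $\AA_{ud}$ case. Precisely, if $A$ has positive upper Banach density, then setting for instance $\delta := \overline{Bd}(A)^2 / 2$, the set
\[\bigl\{k \geq 0 : \overline{Bd}(A \cap (A - k)) > \delta\bigr\}\]
has bounded gaps. Once such a $\delta \in D$ is produced, every $k$ in this set satisfies $A \cap (A-k) \in \AA_{\delta}$, which is exactly the conclusion required by hypothesis (2).

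The main obstacle is thus the Khintchine-type recurrence for $\overline{Bd}$. This is a classical fact, obtained by transferring Khintchine's measure-theoretic double recurrence theorem to the combinatorial setting via Furstenberg's correspondence principle; I would simply cite it from the ergodic-theoretic literature (it is essentially folklore among dynamicists). With that lemma in hand, both hypotheses of Corollary~\ref{Cor_AHC_nPROD} are satisfied and the conclusion follows immediately.
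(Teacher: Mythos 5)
Your overall strategy coincides with the paper's: apply Corollary~\ref{Cor_AHC_nPROD} to $\AA_{uBd}$ with the decomposition recorded in Example~\ref{EX_A}, and obtain hypothesis (2) from a Khintchine-type recurrence statement for the upper Banach density. For that second hypothesis the paper cites Theorem~20 of \cite{Besmenperpuig}; your route through Furstenberg's correspondence principle is an acceptable alternative source for the same fact, and the passage from $\overline{Bd}\big(A\cap(A-k)\big)>\delta$ to $A\cap(A-k)\in\AA_{\delta}$ is legitimate because $b_N\ge \overline{Bd}(B)\cdot N$ for every $N$ by subadditivity of $(b_N)$, so a window of every length realizes density $>\delta$.

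Your verification of hypothesis (1) with $\delta=\delta'$, however, is wrong. The class $\AA_{\delta'}=\cap_{n}\AA_{\delta',n}$ is not shift-invariant: membership in $\AA_{\delta',n}$ only requires the existence of \emph{one} window $[m,m+n]$, and your claim that this $m$ ``can be chosen arbitrarily large because $b_n$ is a $\limsup$'' is a non sequitur, since $\AA_{\delta',n}$ is not defined through that $\limsup$. Concretely, take $A=\{0,1\}\cup\{2j:\ j\ge 1\}$ and $\delta'=\tfrac12$. For every $n$ the window $[0,n]$ contains $2+\lfloor n/2\rfloor>\tfrac{n+1}{2}$ elements of $A$, so $A\in\AA_{1/2}$; but $A-2$ meets no interval of length $2$ in more than one point, so $A-2\notin\AA_{1/2,1}$ and hence $A-2\notin\AA_{1/2}$. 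Fortunately, hypothesis (1) of Corollary~\ref{Cor_AHC_nPROD} only asks for \emph{some} $\delta$ depending on $\delta'$, and the implication does hold for every $\delta<\delta'$: given $n_0$ and $k$, choose $n$ large, take a window of length $n+1$ carrying more than $\delta'(n+1)$ points of $A$, discard the portion below $k$ (a loss of at most $k$ points) and cut the remainder into blocks of length $n_0+1$; for $n$ large enough at least one block has density $>\delta$ and starts at some $m\ge k$. This is exactly the statement the paper uses (``for every $k\ge 0$ and every $\delta'>\delta>0$, $A\in\AA_{\delta'}\implies A-k\in\AA_{\delta}$''); with $\delta<\delta'$ in place of $\delta=\delta'$ your proof goes through unchanged.
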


\begin{proof}
	We check the conditions of Corollary~\ref{Cor_AHC_nPROD} for the upper Furstenberg family $\AA=\AA_{uBd}$. We have
	\[D=\R_{+}^{*}\quad \text{ and }\quad
          \AA_{\delta,n}=\{A\subseteq \N: \, \exists m\geq 0\text{
            such that } \frac{\#(A\cap [m,m+n])}{n+1}>\delta\}.\] 
	Here again, for every $k\geq0$, every $\delta'>\delta>0$, $A\in\AA_{\delta'}\ \implies\ A-k\in\AA_{\delta}$ and the fact that if $A\in\AA_{\delta}$ then $\{k\geq
        0:\ A\cap(A-k)\in\AA_{\frac{\delta^2}{2}}\}$ has bounded gaps,
        has already been obtained, as Theorem 20 in  \cite{Besmenperpuig}.
\end{proof}

Let us now consider the last family we are interested in: the
family of sets having positive upper density with respect to some
weight $a\in\SS$. The result concerning the $n$-fold product of a
$\mathcal{U}$-frequently hypercyclic operator with respect to a weight
of $\SS$ will follow from Corollary \ref{Cor_AHC_nPROD} and the two
next lemmas. The first one gives the shift-invariance of the weighted
densities. 

\begin{lemma}\label{lem:1}
Let $A\subseteq \N$ and assume that $(n_i)_{i \in \N}$ is an
increasing sequence satisfying $\lim_{i \to +\infty}
\frac{\sum_{j=0}^{n_i}a_j\indi{A}(j)}{\sum_{j=0}^{n_i}a_j}=\delta$.
Then, for every $k \in \N$, one has $$
\lim_{i \to +\infty}\frac{\sum_{j=0}^{n_i}a_j\indi{A-k}(j)}{\sum_{j=0}^{n_i}a_j}=\delta
\quad \text{and}\quad \overline{d}_{a}(A) = \overline{d}_{a}(A-k)\, .$$
\end{lemma}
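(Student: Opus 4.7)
The plan is to exploit two properties of sequences in $\SS$: (a) $a_{n+k}/a_n \to 1$ as $n \to \infty$ for every fixed $k \in \N$, and (b) $a_n/\sum_{j=0}^n a_j \to 0$. Property (b) is immediate since this ratio equals $v_n(a)/(1+v_n(a))$. For (a), the identity $a_n = v_n(a)\sum_{j=0}^{n-1}a_j$ yields $a_{n+1}/a_n = (v_{n+1}(a)/v_n(a))(1 + v_n(a))$, so $1 \leq a_{n+1}/a_n \leq 1 + v_n(a) \to 1$, and iteration on $k$ delivers (a).

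For the first statement, I use $\indi{A-k}(j) = \indi{A}(j+k)$ to rewrite
\[\sum_{j=0}^{n_i} a_j \indi{A-k}(j) = \sum_{l=k}^{n_i+k} a_{l-k} \indi{A}(l).\]
For the upper bound, the monotonicity of $(a_n)$ gives $a_{l-k} \leq a_l$, and then $\sum_{l=k}^{n_i+k} a_l \indi{A}(l) \leq \sum_{l=0}^{n_i} a_l \indi{A}(l) + k\, a_{n_i+k}$; dividing by $T_{n_i} := \sum_{j=0}^{n_i} a_j$ and invoking (a)-(b) to kill the correction term yields a limsup bounded by $\delta$. For the lower bound, given $\varepsilon > 0$ property (a) provides $L$ with $a_{l-k} \geq (1-\varepsilon) a_l$ for all $l \geq L$; truncating the sum below $L$ costs only a bounded additive constant, so dividing by $T_{n_i} \to \infty$ and letting $\varepsilon \to 0$ produces a matching liminf of $\delta$.

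For the second statement, one inequality is direct: picking a sequence $(n_i)$ along which $S^A_{n_i}/T_{n_i}$ tends to $\overline{d}_a(A)$, the first part gives $\overline{d}_a(A-k) \geq \overline{d}_a(A)$. For the reverse, I run the completely symmetric argument: starting from a subsequence realizing $\overline{d}_a(A-k)$ and writing $\sum_{j=0}^{n_i} a_j \indi{B+k}(j) = \sum_{l=0}^{n_i-k} a_{l+k} \indi{B}(l)$ for $B := A-k$, the same estimates (using $a_{l+k}/a_l \to 1$) show that $\overline{d}_a(B+k) = \overline{d}_a(B)$; since $(A-k)+k = A \cap [k,\infty)$ differs from $A$ by at most $k$ elements, the upper $a$-density is unchanged and $\overline{d}_a(A) \geq \overline{d}_a(A-k)$ follows. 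The main technical point throughout is checking that the $O(k)$ boundary terms and the multiplicative error $a_{l\pm k}/a_l - 1$ are both $o(1)$ after division by $T_{n_i}$, which is exactly where the two defining conditions $a_n \to +\infty$ and $v_n(a) \to 0$ of $\SS$ are used.
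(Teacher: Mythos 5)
Your proof is correct and follows essentially the same route as the paper's: shift the summation index, use monotonicity of $(a_n)_{n\in\N}$ for one inequality and the bound $a_{n+1}\leq a_n(1+v_n(a))$ (your property (a)) for the other, and kill the boundary terms using $v_n(a)\to 0$ and $\sum_{j\leq N}a_j\to\infty$. The only cosmetic differences are that the paper reduces to $k=1$ and reads both inequalities of the second claim off the same two estimates, whereas you handle general $k$ directly and obtain the reverse inequality via the symmetric shift $B\mapsto B+k$ together with the finite-modification invariance of $\overline{d}_a$; both are fine.
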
 

\begin{proof}
Remark that it suffices to prove the result for $k=1$. 
	Since $(a_k)_{k \in \N}$ is non-decreasing and tends to
        infinity, we have for every $N \in \N$
		\[
		\frac{\sum_{j=0}^{N}a_j\indi{A-1}(j)}{\sum_{j=0}^{N}a_j}=
\frac{\sum_{j=1}^{N+1}a_{j-1}\indi{A}(j)}{\sum_{j=0}^{N}a_j}
		\leq \frac{\sum_{j=0}^{N}a_{j}\indi{A}(j)}{\sum_{j=0}^{N}a_j}+\frac{a_{N+1}}{\sum_{j=0}^{N}a_j}.
		\]
		Hence, since $v_{N+1}(a)$ tends to zero for $a\in\SS$,
                we obtain that \[\limsup_{i \to  +\infty} \frac{\sum_{j=0}^{n_i}a_j\indi{A-1}(j)}{\sum_{j=0}^{n_i}a_j}\le \delta \quad \text{and} \quad \overline{d}_a(A-1)\le \overline{d}_a(A).\]
		
		For the reverse inequality, remember that for any $a\in\SS$, the sequence $(v_n(a))_{n\in\N}$ is non-increasing. This yields $a_n\leq a_{n-1}\frac{\sum_{j=0}^{n-1}a_j}{\sum_{j=0}^{n-2}a_j}=a_{n-1}(1+v_{n-1}(a))$ and from that we derive that for every $n_0\geq0$
		\begin{align*}
		\frac{\sum_{j=n_0}^{N}a_j\indi{A-1}(j)}{\sum_{j=0}^{N}a_j}&\ge\frac{\sum_{j=n_0+1}^{N}a_{j-1}\indi{A}(j)}{\sum_{j=0}^{N}a_j}\\
		&\geq \frac{\sum_{j=n_0+1}^{N}\frac{a_{j}}{1+v_{j-1}(a)}\indi{A}(j)}{\sum_{j=0}^{N}a_j}\\
		&\geq \frac{1}{1+v_{n_0}(a)}\frac{\sum_{j=n_0+1}^{N}a_j\indi{A}(j)}{\sum_{j=0}^{N}a_j}.
		\end{align*}
It follows that
\[
\liminf_{i \to + \infty} \frac{\sum_{j=0}^{n_i}a_j\indi{A-1}(j)}{\sum_{j=0}^{n_i}a_j}\ge \frac{1}{1+v_{n_0}(a)} \delta \quad \text{and}\quad \overline{d}_{a}(A-1) \geq  \frac{1}{1+v_{n_0}(a)} \overline{d}_{a}(A)
\]
and using  again that the sequence $(v_n(a))_{n \in \N}$ tends to
zero, we obtain the conclusion. 
\end{proof}

The proof of the second lemma follows the ideas of Theorem 8 from \cite{Bayru}. 

\begin{lemma}\label{lem:2}
Let $\delta >0$. If $A\subseteq \N$ satisfies $\overline{d}_{a}(A)>
\delta$, then the set \[\KK=\big\{k\in \N:\ \overline{d}_{a}
  \big(A\cap(A-k)\big) > \frac{\delta^2}{2}\big\}\]
		has bounded gaps.
\end{lemma}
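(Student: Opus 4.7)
The plan is to use the shift-invariance of $\overline{d}_{a}$ from Lemma~\ref{lem:1} to construct a positive semi-definite correlation function on $\Z$, then invoke Bochner's theorem together with a uniform Fejér-kernel estimate (the discrete analogue of Khintchine's recurrence theorem).

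Setting $\delta' := \overline{d}_{a}(A) > \delta$ and $S_N := \sum_{j=0}^N a_j$, I would first extract a subsequence $(n_i)$ along which $\sum_{j=0}^{n_i} a_j \indi{A}(j) / S_{n_i} \to \delta'$. A standard diagonal extraction gives, for every $l \in \N$, the existence of $\phi(l) := \lim_i S_{n_i}^{-1} \sum_{j=0}^{n_i} a_j \indi{A \cap (A-l)}(j)$, with $\phi(0) = \delta'$ and $\phi(l) \le \overline{d}_a(A \cap (A-l))$. The key point is that Lemma~\ref{lem:1}, applied to each $A \cap (A-l)$, ensures this subsequential limit is unchanged by any shift, so for reals $c_0, \ldots, c_{N-1}$ and integers $k_0 < \cdots < k_{N-1}$ the expansion
\[\sum_{j} a_j \Big(\sum_p c_p \indi{A}(j+k_p)\Big)^2 \ge 0,\]
divided by $S_{n_i}$ and interpreted term by term via shift-invariance, yields $\sum_{p,q} c_p c_q \phi(|k_p - k_q|) \ge 0$. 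Extending $\phi$ evenly to $\Z$, this makes $\phi$ positive semi-definite.

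By Bochner's theorem, $\phi(k) = \int_\T e^{2\pi i k t}\, d\nu(t)$ for some non-negative finite Borel measure $\nu$ on $\T = \R/\Z$, with $\nu(\T) = \delta'$. Choosing $c_p = 1$ for $p \in [0, K-1]$ and using $S_{n_i}^{-1}\sum_j a_j \sum_p \indi{A}(j+p) \to K\delta'$ gives the Cauchy--Schwarz lower bound $\sum_{p, q \in [0, K-1]} \phi(|p-q|) \ge K^2 (\delta')^2$; the left-hand side equals $\int_\T |D_K(t)|^2\, d\nu(t)$ with $D_K$ the Dirichlet kernel, so normalising by $K^2$ and letting $K \to \infty$ (Fejér kernel) gives $\nu(\{0\}) \ge (\delta')^2 > \delta^2$. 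The conclusion then follows from a uniform Cesaro averaging: for any $k_0 \ge 0$,
\[\frac{1}{K} \sum_{k=k_0}^{k_0 + K - 1} \phi(k) = \int_\T F_{k_0, K}(t)\, d\nu(t),\]
where $F_{k_0, K}(t) = K^{-1} \sum_{k=k_0}^{k_0+K-1} e^{2\pi i k t}$ satisfies the $k_0$-independent bound $|F_{k_0, K}(t)| \le \min(1, 1/(K\, \mathrm{dist}(t, \Z)))$. Splitting $\T$ into a small neighbourhood of $0$ (of $\nu$-mass close to $\nu(\{0\})$) and its complement (where the integrand is $O(1/K)$ uniformly in $k_0$), this average tends to $\nu(\{0\})$ as $K \to \infty$ uniformly in $k_0$. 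Hence for $K$ large enough it exceeds $\delta^2/2$ for every $k_0$, so every window $[k_0, k_0 + K - 1]$ contains some $k$ with $\phi(k) > \delta^2/2$, and then $\overline{d}_a(A \cap (A-k)) \ge \phi(k) > \delta^2/2$, giving $k \in \KK$ and thus gaps at most $K$.

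The hard part is the uniform-in-$k_0$ Cesaro convergence. Elementary Cauchy--Schwarz applied to an interval $I$ only controls $\phi$ at differences of elements of $I$, i.e., in a window of differences starting at $0$, which is not directly useful for shifted windows $[k_0, k_0 + K - 1]$; it is precisely the spectral measure $\nu$ coming from Bochner's theorem that converts this ``differences-only'' information into uniform control of shifted Cesaro averages, which is what forces bounded gaps.
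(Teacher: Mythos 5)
Your proof is correct, but the way you extract the bounded-gaps conclusion is genuinely different from the paper's. The two arguments share the same opening moves: the diagonal extraction of a single subsequence $(n_i)$ along which all the limits $\delta_k=\phi(k)$ exist, the crucial use of Lemma~\ref{lem:1} to make these limits invariant under shifts (which is exactly what lets the cross terms $\sum_j a_j\indi{(A-k_p)\cap(A-k_q)}(j)/\sum_j a_j$ converge to $\phi(|k_p-k_q|)$), and a weighted second-moment/Cauchy--Schwarz estimate. From there the paper stays combinatorial, following Theorem~8 of Bayart--Ruzsa: it takes a finite set $R$ with $\delta_{k-l}\le\delta^2/2$ for all distinct $k,l\in R$, plays the second-moment upper bound against the Jensen lower bound $\left((\#R)\,\overline{d}_a(A)\right)^2$ to get $\#R\le\bigl(\overline{d}_a(A)-\tfrac{\delta^2}{2}\bigr)/\bigl((\overline{d}_a(A))^2-\tfrac{\delta^2}{2}\bigr)$, and then observes that a maximal such $R$ satisfies $\mathcal{K}'+R=\N$, which is bounded gaps. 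You instead upgrade the same positivity computation to full positive semi-definiteness of $\phi$ on $\Z$, invoke Herglotz--Bochner to produce a spectral measure $\nu$, use the Fej\'er kernel to show $\nu(\{0\})\ge(\overline{d}_a(A))^2>\delta^2$, and then exploit the $k_0$-independent decay of the shifted Dirichlet kernels away from $0$ to get uniform convergence of all shifted Ces\`aro averages of $\phi$ to $\nu(\{0\})$, forcing every long enough window to meet $\mathcal{K}$. Your route costs more machinery (Herglotz's theorem, a uniform kernel estimate, and positive semi-definiteness at arbitrary nodes $k_p$ rather than a single application of Jensen), but it buys the stronger and cleaner statement that $\{k:\ \overline{d}_a(A\cap(A-k))>(\overline{d}_a(A))^2-\varepsilon\}$ is syndetic for every $\varepsilon>0$; the paper's maximal-set argument is shorter and entirely elementary. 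Both proofs rest on the same external input, namely the shift-invariance along $(n_i)$ provided by Lemma~\ref{lem:1}.
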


\begin{proof}
Let us consider a sequence $(m_i)_{i\in\N}$ such
that: \[\frac{\sum_{j=0}^{m_i}a_j\indi{A}(j)}{\sum_{j=0}^{m_i}a_j}\underset{i\to\infty}{\longrightarrow}\overline{d}_a(A).\]
Since for every $k\in\N$, the sequence
$\left(\frac{\sum_{j=0}^{m_i}a_j\indi{A\cap(A-k)}(j)}{\sum_{j=0}^{m_i}a_j}
\right)_{i \in \N}$ belongs to the compact set $[0,1]$, we can use a
diagonal procedure to extract a subsequence of the sequence
$(m_i)_{i\in\N}$, which we will denote $(n_{i})_{i \in
  \N}$, such that for every $k \in \N$, the limit
 \[\frac{\sum_{j=0}^{n_i}a_j\indi{A\cap(A-k)}(j)}{\sum_{j=0}^{n_i}a_j}\underset{i\to\infty}{\longrightarrow}\delta_k\]
exists.  It suffices now to prove that the set $\mathcal{K}':=\{k\in
\N:\ \delta_k>\frac{\delta^2}{2}\}\subseteq \mathcal{K}$ has bounded gaps.
		Let $R$ be a finite set such that for every $k\neq l\in R$,
		\begin{equation}\label{EQ_Delta}
		\delta_{k-l}\leq\frac{\delta^2}{2}.
		\end{equation}
		On one hand, we have by Lemma \ref{lem:1}
		\[\frac{\sum_{j=0}^{n_i}a_j\left(\sum_{k\in R}\indi{A-k}(j)\right)}{\sum_{j=0}^{n_i}a_j}\underset{i\to\infty}{\longrightarrow}\sum_{k\in R} \overline{d}_a(A)=(\#R)\cdot \overline{d}_a(A).\]
	On the other hand,
	\begin{align*}
	\frac{\sum_{j=0}^{n_i}a_j\left(\sum_{k\in R}\indi{A-k}(j)\right)^2}{\sum_{j=0}^{n_i}a_j}&=\frac{\sum_{j=0}^{n_i}a_j\sum_{k\in R}\sum_{l\in R}\indi{(A-k)\cap(A-l)}(j)}{\sum_{j=0}^{n_i}a_j}\\
	&=\sum_{k\in R}\sum_{l\in R}\frac{\sum_{j=0}^{n_i}a_j\indi{(A-k)\cap(A-l)}(j)}{\sum_{j=0}^{n_i}a_j}\\
	&=\sum_{k\in R}\sum_{l\in R}\frac{\sum_{j=k}^{n_i+k}a_{j-k}\indi{A\cap(A+k-l)}(j)}{\sum_{j=0}^{n_i}a_j}
	\end{align*}
Using again Lemma \ref{lem:1}, we obtain that
	\[\frac{\sum_{j=k}^{n_i+k}a_{j-k}\indi{A\cap(A+k-l)}(j)}{\sum_{j=0}^{n_i}a_j}\underset{i\to\infty}{\longrightarrow}\delta_{l-k}.\]
	Therefore, we have 
	\[\frac{\sum_{j=0}^{n_i}a_j\left(\sum_{k\in R}\indi{A-k}(j)\right)^2}{\sum_{j=0}^{n_i}a_j}\underset{i\to\infty}{\longrightarrow}\sum_{k\in R}\sum_{l\in R}\delta_{l-k}\leq (\#R)(\#R-1)\frac{\delta^2}{2}+(\#R)\overline{d}_a(A).\]
	If we define $f(j)=\sum_{k\in R}\indi{A-k}(j)$ then Jensen inequality gives \[\mathbb{E}_{a}^{i}(f^2)\geq \left(\mathbb{E}_{a}^{i}(f)\right)^2,\]
	where $\mathbb{E}_{a}^{i}(X):=\sum_{j=0}^{n_i}\frac{a_j}{\sum_{n=0}^{n_i}a_n}X(j)$.
	Considering what has been proved before, this yields
	\[(\#R)(\#R-1)\frac{\delta^2}{2}+(\#R)\overline{d}_a(A)\geq \left((\#R)\overline{d}_a(A)\right)^2.\]
	After having used the assumption $\overline{d}_a(A)>\delta$ and simplified the previous equation, we obtain:
	\[\#R\leq
          \frac{\overline{d}_a(A)-\frac{\delta^2}{2}}{\left(\overline{d}_{a}(A)\right)^2-\frac{\delta^2}{2}}.\]
Hence, one can choose a set $R$ being maximal for relation
(\ref{EQ_Delta}). By maximality, if $n \notin R$, there exists $k \in
R$ such that $\delta_{n-k}> \frac{\delta^2}{2}$. It follows that $\mathcal{K}'+R=\N$, which gives that $\mathcal{K}'$ has bounded gaps.
\end{proof}

We can now easily obtain the result concerning
$\mathcal{U}$-frequently hypercyclic operators with respect to $a\in\SS$.

\begin{corollary}
	If $T$ is $\mathcal{U}$-frequently hypercyclic with respect to $a\in\SS$, then for any $n\geq1$, the $n$-fold product $T\times\cdots\times T$ is $\mathcal{U}$-frequently hypercyclic with respect to $a$.
\end{corollary}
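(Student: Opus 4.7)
The plan is to apply Corollary~\ref{Cor_AHC_nPROD} to the upper Furstenberg family $\AA_{ud_a}$ of sets of positive upper $a$-density, described in Example~\ref{EX_A}(3). Recall that in this case $D=\R_+^*$ and
\[
\AA_\delta=\bigcap_{n\in\N}\AA_{\delta,n}=\Big\{A\subseteq\N:\ \forall n,\ \exists N\geq n,\ \tfrac{\sum_{j=0}^{N}a_j\indi{A}(j)}{\sum_{j=0}^{N}a_j}>\delta\Big\},
\]
so in particular $\overline{d}_a(B)>\delta$ implies $B\in\AA_\delta$, and $B\in\AA_\delta$ implies $\overline{d}_a(B)\geq \delta$. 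The whole job is therefore to verify the two combinatorial conditions of Corollary~\ref{Cor_AHC_nPROD} for $\AA_{ud_a}$; the two lemmas above have been arranged precisely for this purpose.

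For condition (1), I would fix $\delta'>0$ and set $\delta=\delta'/2$ (any value strictly below $\delta'$ works). If $A\in\AA_{\delta'}$ then $\overline{d}_a(A)\geq\delta'$, and Lemma~\ref{lem:1} gives $\overline{d}_a(A-k)=\overline{d}_a(A)\geq \delta'>\delta$ for every $k\geq 0$, so $A-k\in\AA_\delta$. This step is essentially immediate once shift-invariance of $\overline{d}_a$ is in hand.

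For condition (2), I would fix $A\in\AA_{ud_a}$ and choose $\delta'>0$ such that $\overline{d}_a(A)>\delta'$. Lemma~\ref{lem:2} then states that the set
\[
\Big\{k\in\N:\ \overline{d}_a\bigl(A\cap(A-k)\bigr)>\tfrac{(\delta')^2}{2}\Big\}
\]
has bounded gaps. For each such $k$, one has $A\cap(A-k)\in\AA_{(\delta')^2/2}$, so the set $\{k:\ A\cap(A-k)\in\AA_{(\delta')^2/2}\}$ contains a set with bounded gaps and therefore itself has bounded gaps. Taking $\delta=(\delta')^2/2$ finishes the verification.

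With both hypotheses of Corollary~\ref{Cor_AHC_nPROD} satisfied, the conclusion that every $n$-fold product $T\times\cdots\times T$ is $\mathcal{U}$-frequently hypercyclic with respect to $a$ follows at once. I do not expect any serious obstacle: the genuine work lies in Lemmas~\ref{lem:1} and \ref{lem:2}, and the only subtlety left is the strict-versus-nonstrict inequality issue in the definition of $\AA_\delta$, which is disposed of by shrinking the parameter slightly (using $\delta'/2$ in condition (1) and $(\delta')^2/2$ in condition (2) where $\delta'<\overline{d}_a(A)$).
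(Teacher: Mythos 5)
Your proposal is correct and follows exactly the route the paper takes: the paper's own proof simply says the result follows by combining Corollary~\ref{Cor_AHC_nPROD} with Lemma~\ref{lem:1} and Lemma~\ref{lem:2}, and you have filled in the verification of the two hypotheses in the intended way, including the correct handling of the strict-versus-nonstrict inequality by shrinking the density parameter.
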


\begin{proof}
		Let $a\in \SS$. We consider the upper Furstenberg
                family $\AA_{ud_{a}}$ defined in Example \ref{EX_A}
                and the conclusion is obtained by combining
                Corollary~\ref{Cor_AHC_nPROD}, Lemma~\ref{lem:1} and
                Lemma~\ref{lem:2}. 
\end{proof}

Recall that it is not known if $T\times T$ is frequently hypercyclic as soon as $T$ is frequently hypercyclic. However, it is shown in \cite{Menet2019} that if we let $\mathcal{B}:=\{(b_n)_{n\in\N}:\ (b_n)_{n\in\N}\text{ is non-increasing, tends to zero and }\sum_{n\in\N} b_n=\infty\}$, then for every set $A$, $\inf_{b\in \mathcal{B}}\overline{d}_b(A)=\underline{d}(A)$ and in particular, for every operator $T$, we have $FHC(T)=\bigcap_{b\in\mathcal{B}}UFHC_b(T)$. In other words, this family of weighted upper densities allows to approach the lower density and it is interesting to notice that by following similar arguments, we get the same result for weights in $\mathcal{B}$. 

\begin{corollary}
If $T$ is $\mathcal{U}$-frequently hypercyclic with respect to $b\in\mathcal{B}$, then for any $n\geq1$, the $n$-fold product $T\times\cdots\times T$ is $\mathcal{U}$-frequently hypercyclic with respect to $b$.
\end{corollary}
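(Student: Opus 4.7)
The plan is to mirror the proof of the preceding corollary by applying Corollary~\ref{Cor_AHC_nPROD} to the upper Furstenberg family $\AA_{ud_b}$ of subsets of $\N$ with positive upper $b$-density. One defines $\AA_{ud_b}=\bigcup_{\delta>0}\bigcap_{n\in\N}\AA_{\delta,n}$ with $\AA_{\delta,n}=\{A\subseteq\N:\exists N\geq n,\ \frac{\sum_{j=0}^N b_j\indi{A}(j)}{\sum_{j=0}^N b_j}>\delta\}$, which is an upper Furstenberg family by exactly the verification used in Example~\ref{EX_A}(3). It then remains to check the two structural conditions~(1) and~(2) of Corollary~\ref{Cor_AHC_nPROD}.

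For the shift-invariance condition (1), the situation is actually simpler for $b\in\mathcal{B}$ than for $\SS$-weights, since the upper $b$-density turns out to be \emph{exactly} translation invariant. Writing $S_b(A,N)=\sum_{j=0}^N b_j\indi{A}(j)$ and $B(N)=\sum_{j=0}^N b_j$, an Abel-summation argument using that $b$ is non-increasing and nonnegative yields $|S_b(A-1,N)-S_b(A,N)|\leq b_0$ uniformly in $A$ and $N$, because the telescoping sum $\sum_{j=1}^N(b_{j-1}-b_j)\indi{A}(j)$ is dominated by $b_0-b_N$. Since $B(N)\to\infty$, dividing through and iterating in $k$ gives $\overline{d}_b(A-k)=\overline{d}_b(A)$ for every $k\geq 0$; so condition~(1) holds with $\delta=\delta'$, and moreover along any subsequence $(n_i)$ where $S_b(A,n_i)/B(n_i)$ converges, the shifted ratio converges to the same limit, which is the analog of Lemma~\ref{lem:1} for $\mathcal{B}$.

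For the bounded-gaps condition (2), I would transcribe the proof of Lemma~\ref{lem:2} essentially verbatim. Starting from $A$ with $\overline{d}_b(A)>\delta$, I extract by a diagonal procedure a subsequence $(n_i)$ along which $S_b(A\cap(A-k),n_i)/B(n_i)\to\delta_k$ exists for every $k$, with $\delta_0=\overline{d}_b(A)$, and take $R\subseteq\N$ maximal subject to $\delta_{k-l}\leq\delta^2/2$ for all $k\neq l\in R$. Jensen's inequality applied to $f(j)=\sum_{k\in R}\indi{A-k}(j)$ with respect to the probability weights $b_j/B(n_i)$ then gives the desired a priori bound on $\#R$: the exact shift-invariance from the previous paragraph forces the diagonal contributions $\gamma_{k,k}$ to equal $\overline{d}_b(A)$ and identifies the off-diagonal terms $\gamma_{k,l}$ with $\delta_{l-k}$ (via the same reindexing $j'=j+k$ used in Lemma~\ref{lem:2}), so the inequality $(\#R)(\#R-1)\frac{\delta^2}{2}+(\#R)\overline{d}_b(A)\geq((\#R)\overline{d}_b(A))^2$ goes through unchanged and yields bounded gaps. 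Combining~(1) and~(2) with Corollary~\ref{Cor_AHC_nPROD} finishes the proof. The only real technical check is the shift-invariance for $b\in\mathcal{B}$, and the main (mild) conceptual point is noticing that this identical framework applies in both the $\SS$ and $\mathcal{B}$ regimes, despite the weights being non-decreasing in one case and non-increasing in the other.
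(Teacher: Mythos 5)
Your proof is correct and follows exactly the route the paper intends (the paper gives no explicit proof here, only the remark that ``similar arguments'' apply): verify the hypotheses of Corollary~\ref{Cor_AHC_nPROD} for the family $\AA_{ud_b}$ via analogs of Lemmas~\ref{lem:1} and~\ref{lem:2}. Your telescoping argument correctly supplies the one genuinely new ingredient --- shift-invariance of $\overline{d}_b$ for non-increasing weights, which in fact comes out as exact translation invariance --- and the Bayart--Ruzsa second-moment argument then transfers verbatim; the only cosmetic adjustment is that in condition~(1) of Corollary~\ref{Cor_AHC_nPROD} one should take $\delta<\delta'$ rather than $\delta=\delta'$ to absorb the strict inequality in the definition of $\AA_{\delta,n}$, exactly as the paper does for $\AA_{ud}$.
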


These remarks lead to the following natural questions.

\begin{question}
	Is the $n$-fold product $T\times\cdots\times T$ of a
        frequently hypercyclic operator $T$ also frequently
        hypercyclic? 
\end{question}

\begin{question}
	What can be said about an infinite product of $T$? A variant
        of this question has been studied in a paper by Grivaux,
        Matheron and Menet \cite{Monster} where they obtain negative
        results for some infinite products of all different $\mathcal{U}$-frequently hypercyclic operators.
\end{question}

\section{Chaos and $\mathcal{U}$-frequent hypercyclicity with respect to $a\in\SS$}

The aim of this section is to study the link between chaos and
frequent hypercyclicity with respect to weights in $\SS$. Indeed, we
have seen in Theorem \ref{TH_eq_UFHCa_RHCa} that
$\mathcal{U}$-frequent hypercyclicity with respect to weights in $\SS$
is an intermediate notion lying between $\mathcal{U}$-frequent
hypercyclicity and reiterative hypercyclicity. In addition, Menet
\cite{MenetChaos} proved that every chaotic operator is automatically
reiteratively hypercyclic while there exist chaotic operators that
are not $\mathcal{U}$-frequently hypercyclic. These results induce the
question of knowing whether there exists  weights $a \in \SS$ for
which chaotic operators are always $\mathcal{U}$-frequently
hypercyclic with respect to $a$. In what follows, this question is
answered negatively by considering on $\ell^{1}(\mathbb{N})$ an operator  of C-type as introduced in
\cite{MenetChaos,Monster}. 

\begin{definition}
The operator of $C$-type $T_{v,w,\varphi,b}$ on $\ell^{p}(\mathbb{N})$ is defined
by 
\[
T_{v,w,\varphi,b}e_{k} =
\begin{cases}
w_{k+1}e_{k+1}& \text{if } k \in [b_{n},b_{n+1}-1[ \, , \, n \ge 0\\[2.5ex]
v_{n}e_{b_{\varphi(n)}} - \Big(
  \displaystyle\prod_{j=b_{n}+1}^{b_{n+1}-1}w_{j}\Big)^{-1}e_{b_n}&  \text{if }
                                                         k =
                                                         b_{n+1}-1, \,
                                                         n \ge 1\\[2.5ex]
 - \Big( \displaystyle \prod_{j=b_{0}+1}^{b_{1}-1}w_{j}\Big)^{-1}e_{0}&  \text{if }
  k = b_{1}-1
\end{cases}
\]
where the four parameters $v, w , \varphi$ and $b$ are chosen as follows:
\begin{itemize}
\item[-] $v=(v_{n})_{n \ge 1}$ is a sequence of non-zero complex numbers
  such that $\sum_{n \geq 1}|v_{n}|< + \infty$,
\item[-]  $w=(w_{n})_{n \ge 1}$ is a sequence of complex numbers such that
  $0 < \inf_{n \ge 1}|w_{n}|\leq \sup_{n \ge 1}|w_{n}|<  + \infty $,
\item[-]  $\varphi : \N \to \N$ satisfies $\varphi(0)=0$, $\varphi(n) < n$
  and $\#\varphi^{-1}(\{n\}) = + \infty$  for every $n \ge 1$,
\item[-]  $b=(b_{n})_{n \ge 1}$ is an increasing sequence of integers such
  that $b_{0}=0$ and $b_{n+1}-b_{n}$ is a multiple of
  $2(b_{\varphi(n)+1}- b_{\varphi(n)})$ for every $n \ge 1$. 
\end{itemize}
Moreover, we assume that $\inf_{n \ge 0}\prod_{j=b_{n}+1}^{b_{n+1}-1}|w_{j}|>0$. 
\end{definition}

Note that every basis vector $e_{k}$ is periodic for $T_{v,w,\varphi,b}$, hence
the same holds for  every finite vector $x \in c_{00}$. 

Let us begin by stating a result which is a direct consequence of the
proof of Lemma~6.11 from \cite{Monster}. Indeed, the following lemma
is proved there but not explicitly stated. The operator $P_{n}$
denotes the canonical projection of $\ell^{p}$ onto $\text{span}\{
e_{k}: b_{n}\leq k < b_{n+1}\}$. 

\begin{lemma}\label{Lem_6.11}
	Let $T$ be an operator of C-type and
        $x\in\ell^p(\N)\setminus\{0\}$. Assume that there exist a constant $C>0$, a non-increasing
        sequence $(\beta_l)_{l\in\N}$ of positive real numbers
        satisfying $\sum_{l\in\N}\sqrt{\beta_l}\leq 1$, a sequence
        $(X_l)_{l \in \N}$ of non-negative real numbers and a
        sequence $(N_l)_{l\in\N}$ of integers increasing to infinity such that
	\begin{enumerate}[(i)]
		\item $\Vert P_nx\Vert\leq X_n$, for every $n\in\N$,\label{H1}
		\item $\sup_{j\in\N}\Vert P_nT^jP_lx\Vert\leq C\beta_l X_l$ for $0\leq n<l$,\label{H2}
		\item $\sup_{0\leq j\leq N_l}\Vert P_nT^j P_lx\Vert\leq C\beta_l\Vert P_lx\Vert$ for $0\leq n\leq l$,\label{H3}
		\item  $\sup_{j\in\N}\sum_{l>n}\Vert P_nT^jP_lx\Vert> C X_n$ for every $n\in\N$.\label{H4}
	\end{enumerate}
Then, there exist $\varepsilon>0$ and $(l_{n_{s}})_{s\in\N}$ increasing to infinity such that for every $s\in\N$,
\begin{itemize}
	\item $N\left(x,B(0,\varepsilon)^c\right)\cap [0,s]\supseteq \{0\leq j\leq s:\ \Vert P_{l_{n_{s}}}T^j P_{l_{n_{s}}}x\Vert\geq 2CX_{l_{n_{s}}}\}$,
	\item $s>N_{l_{n_{s}}}$.
\end{itemize}
\end{lemma}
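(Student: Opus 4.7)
The plan is to exploit the specific block structure of the C-type operator: $T$ acts as a weighted forward shift inside each block $[b_{n},b_{n+1}-1]$, and the only cross-block contribution, which occurs at $k=b_{n+1}-1$, sends $e_{k}$ into the blocks of index $\varphi(n)<n$ and $n$. Consequently the block index never increases under iteration, so $P_{m}T^{j}P_{l}x=0$ as soon as $m>l$, and for every block index $l$ and every $j\geq 0$
\[
P_{l}T^{j}x = P_{l}T^{j}P_{l}x + \sum_{l'>l}P_{l}T^{j}P_{l'}x,
\]
from which hypothesis (\ref{H2}) and the triangle inequality yield the key lower bound
\[
\|T^{j}x\|\geq \|P_{l}T^{j}x\|\geq \|P_{l}T^{j}P_{l}x\|-C\sum_{l'>l}\beta_{l'}X_{l'}.
\]

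The strategy is then to extract an increasing sequence $(l_{n_{s}})_{s\in\N}$ of block indices along which both (a) $X_{l_{n_{s}}}$ stays bounded below by a uniform positive constant $\varepsilon_{0}$, and (b) the tail $C\sum_{l'>l_{n_{s}}}\beta_{l'}X_{l'}$ is at most $CX_{l_{n_{s}}}$. Once such a subsequence is obtained, the lower bound above shows that as soon as $\|P_{l_{n_{s}}}T^{j}P_{l_{n_{s}}}x\|\geq 2CX_{l_{n_{s}}}$, one gets $\|T^{j}x\|\geq CX_{l_{n_{s}}}\geq C\varepsilon_{0}$, hence $T^{j}x\notin B(0,\varepsilon)$ for $\varepsilon:=C\varepsilon_{0}$, which is exactly the first bullet of the conclusion.

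The requirement $s>N_{l_{n_{s}}}$ is then both natural and unavoidable: hypothesis (\ref{H3}) gives, for every $j\leq N_{l_{n_{s}}}$, the bound $\|P_{l_{n_{s}}}T^{j}P_{l_{n_{s}}}x\|\leq C\beta_{l_{n_{s}}}\|P_{l_{n_{s}}}x\|\leq C\beta_{l_{n_{s}}}X_{l_{n_{s}}}$, which is strictly smaller than $2CX_{l_{n_{s}}}$ since $\sum_{l}\sqrt{\beta_{l}}\leq 1$ forces $\beta_{l}\leq 1$ for every $l$. Thus the set on the right-hand side of the first bullet is necessarily contained in $(N_{l_{n_{s}}},s]$, and asking that $s>N_{l_{n_{s}}}$ is exactly what makes the relevant range of $j$'s possibly non-empty.

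The main obstacle lies in the subsequence extraction itself: producing infinitely many indices $l_{n_{s}}$ for which $X_{l_{n_{s}}}$ remains bounded below while the tail $\sum_{l'>l_{n_{s}}}\beta_{l'}X_{l'}$ shrinks relative to $X_{l_{n_{s}}}$. Here hypothesis (\ref{H4}) is essential: combined with (\ref{H2}), it yields the lower bound $X_{n}<\sum_{l>n}\beta_{l}X_{l}$, which forces the sequence $(X_{n})$ to remain non-negligible infinitely often and ties its decay to that of the tails of $(\beta_{l}X_{l})$. Combining this with the summability $\sum_{l}\sqrt{\beta_{l}}\leq 1$ allows one to pick the indices $l_{n_{s}}$ at the appropriate ``peaks'' of $(X_{l})$, which is the delicate extraction argument carried out within the proof of Lemma~6.11 of \cite{Monster}.
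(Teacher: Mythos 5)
Your overall architecture does match the intended one: the paper does not actually reprove this lemma but extracts it verbatim from the proof of Lemma~6.11 in \cite{Monster}, and the ingredients you identify (the block-triangular structure giving $P_nT^jP_lx=0$ for $n>l$, the resulting lower bound $\Vert T^jx\Vert\geq \Vert P_lT^jP_lx\Vert - C\sum_{l'>l}\beta_{l'}X_{l'}$ via hypothesis (\ref{H2}), and hypothesis (\ref{H3}) forcing the relevant $j$'s to lie beyond $N_l$) are indeed the ones at work there.

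There is nevertheless a genuine gap, and it is not only that you defer the extraction step to \cite{Monster}: the sufficient conditions you propose for that extraction are inconsistent with a consequence of the hypotheses that you yourself derive. Your condition (b) asks for indices $l$ with $C\sum_{l'>l}\beta_{l'}X_{l'}\leq CX_l$, i.e.\ $\sum_{l'>l}\beta_{l'}X_{l'}\leq X_l$; but combining (\ref{H4}) with (\ref{H2}) gives, as you note in your final paragraph, $X_n<\sum_{l>n}\beta_lX_l$ for \emph{every} $n$. Hence no index whatsoever satisfies (b), and the chain $\Vert T^jx\Vert\geq 2CX_l-C\sum_{l'>l}\beta_{l'}X_{l'}\geq CX_l\geq C\varepsilon_0$ never gets off the ground. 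What must actually be produced are infinitely many indices $l$ with $X_l$ bounded below \emph{and} $\sum_{l'>l}\beta_{l'}X_{l'}\leq(2-\eta)X_l$ for some fixed $\eta>0$ --- a bound strictly between the unattainable $X_l$ and the trivial one coming from $X_l<\sum_{l'>l}\beta_{l'}X_{l'}$ --- and this is precisely where the hypothesis $\sum_l\sqrt{\beta_l}\leq1$ (rather than mere summability of $\beta_l$) and the monotonicity of $(\beta_l)$ enter, through an estimate of the form $\sum_{l'>l}\beta_{l'}X_{l'}\leq\bigl(\sup_{l'>l}\sqrt{\beta_{l'}}X_{l'}\bigr)\sum_{l'>l}\sqrt{\beta_{l'}}$ and a careful choice of near-maximizers of $\sqrt{\beta_l}X_l$. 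Since this extraction is the entire analytic content of the lemma, leaving it to \cite{Monster} makes the argument non-self-contained, and the sketch you do give of it aims at a target that cannot be reached.
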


We present now, our main criterion to check that the operator we are going to construct is not $\mathcal{U}$-frequently hypercyclic with respect to $a$. 

\begin{theorem}\label{TH_C_type_not_UFHC_a}
	Let $T$ be an operator of C-type and $a\in\SS$. Assume that for every hypercyclic vector $x$, there exist a constant $C>0$, a non-increasing
        sequence $(\beta_l)_{l\in\N}$ of positive real numbers
        satisfying $\sum_{l\in\N}\sqrt{\beta_l}\leq 1$, a sequence
        $(X_l)_{l \in \N}$ of non-negative real numbers and a
        sequence $(N_l)_{l\in\N}$ of integers increasing to infinity satisfying (\ref{H1}), (\ref{H2}) and (\ref{H3}) from Lemma \ref{Lem_6.11} and
	\begin{equation}\label{H5}
	\liminf_{l\to + \infty}\inf_{s\geq
          N_l}\frac{\sum_{j=0}^{s}a_j\indi{\{j\in \N:\ \Vert P_lT^jP_lx\Vert\geq 2CX_l\}}(j)}{\sum_{j=0}^{s}a_j}=1.\tag{C}
	\end{equation}
	Then $T$ is not $\mathcal{U}$-frequently hypercyclic with respect to $a$.
\end{theorem}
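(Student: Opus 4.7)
I would argue by contradiction. Suppose $x\in UFHC_{a}(T)$; since $x$ is then hypercyclic, the standing hypothesis furnishes a constant $C>0$ and sequences $(\beta_{l})_{l\in\N}$, $(X_{l})_{l\in\N}$, $(N_{l})_{l\in\N}$ satisfying (\ref{H1}), (\ref{H2}), (\ref{H3}) of Lemma~\ref{Lem_6.11} together with the density condition (\ref{H5}). The goal is to exhibit $\varepsilon>0$ such that $\overline{d}_{a}(N(x,B(0,\varepsilon)))=0$, which directly contradicts $x\in UFHC_{a}(T)$ applied to the non-empty open set $B(0,\varepsilon)$.

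The first step is to produce, for every sufficiently large integer $s$, an index $l_{s}$ with $l_{s}\to+\infty$ and $s>N_{l_{s}}$, together with a constant $\varepsilon>0$ independent of $s$, such that
\[\{0\leq j\leq s:\ \|P_{l_{s}}T^{j}P_{l_{s}}x\|\geq 2CX_{l_{s}}\}\subseteq N(x,B(0,\varepsilon)^{c}).\]
This is exactly the conclusion of Lemma~\ref{Lem_6.11}. Although our hypotheses do not include (\ref{H4}), I would revisit the proof of Lemma~\ref{Lem_6.11} from \cite{Monster} and check that the norm estimates producing a uniform $\varepsilon$ rely only on (\ref{H1})--(\ref{H3}) and on the block structure of the C-type operator, by bounding the cross terms $\|P_{l}T^{j}P_{m}x\|$ for $m\neq l$ in order to relate $\|T^{j}x\|$ to $\|P_{l}T^{j}P_{l}x\|$. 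The sole role of (\ref{H4}) in that proof is to guarantee that the extracted indices can be taken going to infinity; in our context this is taken over by (\ref{H5}), which forces the $a$-density of the ``large-norm'' set to be close to $1$ for arbitrarily large $l$.

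I would then combine this containment with (\ref{H5}). Given $\eta>0$, choose $L\in\N$ such that for every $l\geq L$ and every $s\geq N_{l}$ one has
\[\frac{\sum_{j=0}^{s}a_{j}\indi{\{j\in\N:\ \|P_{l}T^{j}P_{l}x\|\geq 2CX_{l}\}}(j)}{\sum_{j=0}^{s}a_{j}}>1-\eta.\]
Since $l_{s}\to+\infty$, we have $l_{s}\geq L$ for $s$ large, and by construction $s>N_{l_{s}}$, so the inequality above applies with $l=l_{s}$. Combining this with the containment from the first step gives
\[\frac{\sum_{j=0}^{s}a_{j}\indi{N(x,B(0,\varepsilon))}(j)}{\sum_{j=0}^{s}a_{j}}<\eta\]
for every sufficiently large $s$. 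Taking the limsup in $s$ and then letting $\eta\to 0$ yields $\overline{d}_{a}(N(x,B(0,\varepsilon)))=0$, the desired contradiction.

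The main obstacle is the first step: a careful re-examination of the proof of Lemma~\ref{Lem_6.11} in \cite{Monster} is needed in order to verify both that its norm estimates are independent of hypothesis (\ref{H4}) and that the role of (\ref{H4}) as a provider of arbitrarily large indices $(l_{n_{s}})$ can cleanly be taken over by (\ref{H5}). Once this bookkeeping is carried out, the density computation in the second step and the final contradiction are routine.
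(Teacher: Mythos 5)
There is a genuine gap at the point you yourself flag as the ``main obstacle.'' The paper does not need to re-examine or modify the proof of Lemma~\ref{Lem_6.11} at all: its key observation is that hypothesis (\ref{H4}) is \emph{automatically satisfied} for any hypercyclic vector $x$ and any choice of $(X_l)_{l\in\N}$. Indeed, hypercyclicity gives $\sup_{j\geq 0}\Vert P_nT^jx\Vert=+\infty$ for every $n$; since $P_nT^jx=\sum_{l\geq n}P_nT^jP_lx$ and $\sup_{j\geq 0}\Vert P_nT^jP_nx\Vert<+\infty$ (because $P_nx$ is a finitely supported, hence periodic, vector for a C-type operator), one gets $\sup_{j\geq 0}\sum_{l>n}\Vert P_nT^jP_lx\Vert=+\infty>CX_n$. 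With this, Lemma~\ref{Lem_6.11} applies as stated and yields the uniform $\varepsilon>0$ and the sequence $(l_{n_s})$ with $s>N_{l_{n_s}}$; the density computation you outline then goes through exactly as in the paper.

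Your proposed workaround --- re-deriving the conclusion of Lemma~\ref{Lem_6.11} from (\ref{H1})--(\ref{H3}) alone and letting (\ref{H5}) ``take over'' the role of (\ref{H4}) --- is not justified and is likely false as stated. Condition (\ref{H4}) is a lower bound on the off-diagonal terms $\sum_{l>n}\Vert P_nT^jP_lx\Vert$ relative to $X_n$; it is what guarantees that along the selected blocks the threshold $2CX_{l}$ is large enough for $\Vert P_{l}T^jP_{l}x\Vert\geq 2CX_{l}$ to force $\Vert T^jx\Vert\geq\varepsilon$ with $\varepsilon$ \emph{uniform} in $s$. Condition (\ref{H5}) is a statement about the $a$-density of the large-norm set and provides no such lower bound, so it cannot substitute for (\ref{H4}). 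As written, your first step is therefore an unproved claim rather than a proof, and the fix is the simple observation above rather than a modification of the lemma.
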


\begin{proof}
	Let us first remark that since $x$ is a hypercyclic vector for
        $T$, one has  \[\sup_{j\geq0}\sum_{l>n}\Vert P_n T^j
          P_l\,x\Vert =+\infty\] 
for every $n\in\N$. Indeed, since $x$ is hypercyclic, we have
$\sup_{j\geq0}\Vert P_n T^j x\Vert=\infty$ for every $n\in \N$ and
since $P_nT^jx=\sum_{l\ge n} P_n T^jP_l\,x$, we get the desired
equality by remarking that $\sup_{j\geq0} \Vert P_n
T^jP_nx\Vert<\infty$ because $P_n x$ is a periodic point. It follows that condition (\ref{H4}) from Lemma
\ref{Lem_6.11} is satisfied for any choice of sequence $(X_l)_{l \in \N}$.
	Thus, applying Lemma \ref{Lem_6.11}, there exists $\varepsilon>0$ and $(l_{n_{s}})_{s\in\N}$ increasing to infinity such that for every $s\in\N$,
	\begin{itemize}
		\item $N\left(x,B(0,\varepsilon)^c\right)\cap [0,s]\supseteq \{0\leq j\leq s:\ \Vert P_{l_{n_{s}}}T^j P_{l_{n_{s}}}x\Vert\geq 2CX_{l_{n_{s}}}\}$,
		\item $s>N_{l_{n_{s}}}$.
	\end{itemize}
	Let us use these two consequences in the computation of the lower $a$-density of $N(x,B(0,\varepsilon)^c)$:
	\begin{align*}
	\frac{\sum_{j=0}^{s}a_j\indi{N(x,B(0,\varepsilon)^c)}(j)}{\sum_{j=0}^{s}a_j}&\geq \frac{\sum_{j=0}^{s}a_j\indi{\{j:\ \Vert P_{l_{n_{s}}}T^jP_{l_{n_{s}}}x\Vert\geq 2CX_{l_{n_{s}}}\}}(j)}{\sum_{j=0}^{s}a_j}\\
	&\geq \inf_{r>N_{l_{n_{s}}}} \frac{\sum_{j=0}^{r}a_j\indi{\{j:\ \Vert P_{l_{n_{s}}}T^jP_{l_{n_{s}}}x\Vert\geq 2CX_{l_{n_{s}}}\}}(j)}{\sum_{j=0}^{r}a_j}\\
	&\geq \inf_{l\geq l_{n_{s}}}\inf_{r>N_{l}} \frac{\sum_{j=0}^{r}a_j\indi{\{j:\ \Vert P_{l}T^jP_{l}x\Vert\geq 2CX_{l}\}}(j)}{\sum_{j=0}^{r}a_j}
	\end{align*}
	Hence, we deduce
	\[\liminf_{s \to + \infty }\frac{\sum_{j=0}^{s}a_j\indi{N(x,B(0,\varepsilon)^c)}(j)}{\sum_{j=0}^{s}a_j}
          \ge \liminf_{l\to+\infty}\inf_{r>N_{l}} \frac{\sum_{j=0}^{r}a_j\indi{\{j:\ \Vert P_{l}T^jP_{l}x\Vert\geq 2CX_{l}\}}(j)}{\sum_{j=0}^{r}a_j}=1\]
	which in turn implies that
        $\overline{d}_a(N(x,B(0,\varepsilon)))=0$. Consequently $T$ cannot be $\mathcal{U}$-frequently hypercyclic with respect to $a$.
\end{proof}

The following lemma will be helpful for the proof of our main result. It is a slight modification of Fact 6.13 from \cite{Monster}. The proof is only a rewriting of the original proof but we keep the sets that we consider in the proof instead of counting their elements like it was done in \cite{Monster}.

\begin{lemma}\label{Lem_Fact_6.13}
	Let $T$ be an operator of C-type and $x\in\ell^p(\N)$.
	Let also \[X_l=\Big\Vert
          \sum_{k=b_l}^{b_{l+1}-1}\left(\prod_{s=k+1}^{b_{l+1}-1}w_s\right)x_k
          e_k\Big\Vert \, .\]
	If there exists $0\leq k_0<k_1 \leq b_{l+1}-b_l$ such that
	\[\vert w_{b_l+k}\vert=1\text{, for every }k\in\, ]k_0,k_1[\, \text{ and }\prod_{s=b_l+k_0+1}^{b_{l+1}-1}\vert w_s\vert=1,\]
	then there exists $j_0<b_{l+1}-b_l$ such that for every $J\in\N$,
	\begin{multline*}
	\left\{0\leq j\leq J:\ \Vert
          P_lT^jP_lx\Vert<\frac{X_l}{2}\right\}\\ \subseteq[0,J]\cap
	\bigcup_{r\in\N}\big[r(b_{l+1}-b_l)+j_0,r(b_{l+1}-b_l)+j_0+2(b_{l+1}-b_l-(k_1-k_0))\big].
	\end{multline*}
\end{lemma}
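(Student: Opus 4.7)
The lemma is essentially a reformulation of Fact~6.13 in \cite{Monster}, trading a cardinality bound for an explicit set inclusion. My plan would proceed in three steps.

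First, since $\varphi(n)<n$ for all $n\ge 1$, any mass that leaves block $l$ under $T$ is mapped by subsequent iterates into blocks of strictly smaller index and is therefore killed by $P_l$; this gives $P_lT^jP_lx=(P_lTP_l)^jP_lx$ for every $j\ge 0$. Writing $N_l:=b_{l+1}-b_l$ and $S_l:=P_lTP_l$, the operator $S_l$ acts on the finite-dimensional block $\mathrm{span}\{e_{b_l},\dots,e_{b_{l+1}-1}\}$ as a cyclic weighted shift whose weights multiply to $-1$, so $S_l^{N_l}=-\mathrm{Id}$ on this subspace and $j\mapsto\Vert S_l^jP_lx\Vert$ is $N_l$-periodic.

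Second, I pass to the twisted basis $f_k=c_k^{-1}e_k$, where $c_k=\prod_{s=k+1}^{b_{l+1}-1}w_s$, in which $S_l$ becomes a pure cyclic shift (with a sign flip at the wrap). Setting $\tilde x_k:=c_kx_k$, the definition of $X_l$ gives $X_l=\bigl(\sum_{k=b_l}^{b_{l+1}-1}|\tilde x_k|^p\bigr)^{1/p}$, and a direct computation yields
\[
\Vert S_l^j P_l x\Vert^p \;=\; \sum_{k=0}^{N_l-1}\frac{|\tilde x_{b_l+k}|^p}{\bigl|c_{b_l+(k+j)\bmod N_l}\bigr|^p}.
\]
The two assumptions on the weights translate into $|c_{b_l+k}|=1$ for every $k\in[k_0,k_1-1]$: indeed $|c_{b_l+k_0}|=\prod_{s=b_l+k_0+1}^{b_{l+1}-1}|w_s|=1$ by hypothesis, and the $|w|$'s are unit throughout $]k_0,k_1[$. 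Consequently every $k$ with $(k+j)\bmod N_l\in[k_0,k_1-1]$ contributes $|\tilde x_{b_l+k}|^p$ to the above sum without any weighting.

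Third, I choose $j_0<N_l$ to be the phase that optimally aligns the cyclic shift so that, within one period, the set of $j$ for which $\Vert S_l^jP_lx\Vert<X_l/2$ is contained in the interval $[j_0,\,j_0+2(N_l-(k_1-k_0))]$. Combining this with the $N_l$-periodicity of $j\mapsto\Vert S_l^jP_lx\Vert$ and intersecting with $[0,J]$ yields the claimed union. The lower bound $\Vert S_l^jP_lx\Vert\ge X_l/2$ for $j$ outside the window follows by discarding all terms of the above sum whose denominator exceeds $1$ and showing that the remaining ``good'' terms still capture at least $X_l^p/2^p$ of the total mass. The main obstacle is justifying the precise factor $2$ in the length $2(N_l-(k_1-k_0))$: the rotating good range of length $k_1-k_0$ may fail to cover the support of $\tilde x$ from one side and then from the other as $j$ moves away from $j_0$, forcing the worst-case bad window to have length roughly $2(N_l-(k_1-k_0))$ rather than just $N_l-(k_1-k_0)$. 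This is exactly the counting argument of Fact~6.13 in \cite{Monster}, rewritten here as an explicit set containment.
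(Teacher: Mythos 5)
Your first two steps are correct and are exactly the right reduction: since $\varphi(n)<n$, mass that leaves the $l$-th block never returns to it, so $P_lT^jP_l=(P_lTP_l)^j$; the block operator is a cyclic weighted shift whose weights multiply to $-1$, giving $(b_{l+1}-b_l)$-periodicity of $j\mapsto\Vert P_lT^jP_lx\Vert$; and in the twisted coordinates $\tilde x_k=c_kx_k$ one indeed gets $X_l^p=\sum_k|\tilde x_k|^p$, your displayed formula for $\Vert P_lT^jP_lx\Vert^p$, and $|c_{b_l+k}|=1$ for all $k\in[k_0,k_1-1]$. Note that the paper offers no proof of this lemma beyond the remark that it is obtained by rerunning the proof of Fact~6.13 of \cite{Monster} while recording the exceptional \emph{sets} rather than their cardinalities; so the entire content of the statement, and of your task, sits in your third step.

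That third step has a genuine gap. Write $N:=b_{l+1}-b_l$ and $m:=k_1-k_0$. You claim the bad set $\{j:\Vert P_lT^jP_lx\Vert<X_l/2\}$ is contained, in each period, in a single interval of length $2(N-m)$, and you justify this by deferring to ``the counting argument of Fact~6.13''. But the counting (averaging) argument --- each index lies in the rotating good window for exactly $m$ of the $N$ values of $j$ per period, so the sum over a period of the captured masses equals $mX_l^p$, whence at most $2(N-m)$ values of $j$ per period can capture less than half the mass --- only bounds the \emph{cardinality} of the bad set. A set of $2(N-m)$ residues need not lie in an interval of length $2(N-m)$, so the cardinality bound cannot yield the stated inclusion; and your heuristic that the good range ``fails to cover the support from one side and then from the other'' tacitly assumes the mass $|\tilde x|^p$ is concentrated in one place, which is not given. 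The missing ingredient is an intersection argument: set $\nu(\{k\})=|\tilde x_{b_l+k}|^p$, of total mass $X_l^p$; if $j$ is bad, then discarding the good terms shows that the rotating complementary window $W_j$, a cyclic interval of $N-m$ residues, satisfies $\nu(W_j)>(1-2^{-p})X_l^p\geq X_l^p/2$. Two windows each of mass strictly greater than half the total cannot be disjoint, so any two bad $j$'s lie at cyclic distance at most $N-m-1$ from each other; fixing one bad $j^\ast$ therefore places all bad residues in a cyclic interval of $2(N-m)-1$ consecutive values, which together with the $N$-periodicity (and the routine bookkeeping needed to unroll a cyclic interval that may straddle $0$ into the stated union over $r$) gives the inclusion with the correct factor $2$. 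Without this step, both the factor $2$ and, more importantly, the interval structure of the exceptional set are unjustified.
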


The operator we are looking for will be constructed as an
operator of $C_{+,1}$-type as defined by Grivaux, Matheron and Menet in
Section 6.5 from \cite{Monster}, which are particular cases of
operators of $C$-type. These operators depend only on three increasing
sequences of integers $(\delta^{(k)})_{k \in \N}, (\Delta^{(k)})_{k \in
  \N}$ and $(\tau^{(k)})_{k \in \N}$ satisfying $\delta^{(k)}<
\Delta^{(k)}$ for every $k \in \N$. The parameters $v,w,\varphi$ and
$b$ are then defined by 
\begin{itemize} 
\item[-] $v_{n} = 2^{-\tau^{(k)}}$ for every $n \in [2^{k-1},
  2^{k}\,[$, 
\item[-] $b_{0}=0$ and $b_{n+1}= b_{n}+ \Delta^{(k)}$ for every $n \in [2^{k-1}, 2^{k}\,[$, 
\item[-] $w_{b_{n}+i}= w_{i}^{(k)}$ for every $n \in [2^{k-1},
  2^{k}\,[$ and every $i \in [1, \Delta^{(k)}[\,$, where \[w_{i}^{(k)}=\begin{cases}
	2&\text{ if }1\leq i\leq \delta^{(k)}\\
	1&\text{ if }\delta^{(k)}< i< \Delta^{(k)}
	\end{cases}\]
\item[-] $\varphi(n) = n - 2^{k-1}$ for every $n \in [2^{k-1},
  2^{k}\,[$. 
\end{itemize}
The result stated below, see Theorem 6.17 of \cite{Monster}, gives a
sufficient condition for an operator of $C_{+,1}$-type to be chaotic.

\begin{theorem}\label{Thm_6.17}
Let $T$ be a $C_{+,1}$-type operator on $l^{p}(\N)$. If $\limsup_{k \to +
  \infty}(\delta^{(k)}- \tau^{(k)})= + \infty$, then $T$ is chaotic. 
\end{theorem}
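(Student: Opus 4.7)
The proof splits naturally into the two ingredients of chaos: density of periodic points and hypercyclicity.

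For the density of periodic points, I would simply invoke the observation, made just after the definition of $C$-type operators, that every basis vector $e_{k}$ is periodic for $T_{v,w,\varphi,b}$, so every $x\in c_{00}$ is periodic. Since $c_{00}$ is dense in $\ell^{p}(\N)$, the periodic points are dense without using the hypothesis on $\delta^{(k)}-\tau^{(k)}$.

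The substantive part is hypercyclicity. The plan is to construct a hypercyclic vector directly, exploiting the cascading structure provided by $\varphi$. The key local computation, which follows from the definitions of a $C_{+,1}$-type operator, is that for $n\in[2^{k-1},2^{k}[$,
\[
T^{\Delta^{(k)}}e_{b_{n}} \;=\; 2^{\delta^{(k)}-\tau^{(k)}}\,e_{b_{\varphi(n)}} \;-\; e_{b_{n}},
\]
because iterating the weighted shift $\Delta^{(k)}-1$ times produces $2^{\delta^{(k)}}e_{b_{n+1}-1}$, and one more application of $T$, combined with $v_{n}=2^{-\tau^{(k)}}$ and $\prod_{j=b_{n}+1}^{b_{n+1}-1}w_{j}=2^{\delta^{(k)}}$, yields the above formula. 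Under the hypothesis one extracts a subsequence $(k_{j})$ along which $\delta^{(k_{j})}-\tau^{(k_{j})}\to+\infty$, so that the coefficient in front of $e_{b_{\varphi(n)}}$ becomes arbitrarily large while the coefficient of $e_{b_{n}}$ stays bounded. This opens the door to transferring a macroscopic amount of mass from very high blocks down to lower ones.

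From here, the plan is to build a hypercyclic vector as an absolutely convergent series $x=\sum_{s}c_{s}e_{b_{n_{s}}}$, with indices $n_{s}$ chosen in successively deeper iterated preimages $\varphi^{-r}(\{m\})$ (possible since $\#\varphi^{-1}(\{m\})=+\infty$ for every $m\geq 1$) and with amplitudes $c_{s}$ tuned so that a suitable iterate $T^{N_{s}}x$ approximates the $s$-th vector of a fixed countable dense subset of $\ell^{p}(\N)$. The divisibility assumption $2(b_{\varphi(n)+1}-b_{\varphi(n)})\mid b_{n+1}-b_{n}$ is precisely what guarantees that the iterates needed to process later terms of the series do not disrupt the approximations already produced at earlier stages, because previously-used basis vectors have period dividing the relevant iterate. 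The main obstacle is the bookkeeping: one must simultaneously control the $\ell^{p}$-tails $\sum_{s>S}c_{s}e_{b_{n_{s}}}$, ensure that at stage $s$ all earlier coordinates remain essentially untouched, and verify that the ``feedback'' term $-e_{b_{n}}$ appearing in the key computation does not accumulate destructively through the cascade. This delicate orchestration is carried out in \cite{Monster}, whose Theorem~6.17 is the statement above and which we invoke.
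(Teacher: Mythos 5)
The paper itself gives no proof of this statement: it is imported verbatim from \cite{Monster} (Theorem 6.17 there), so there is no internal argument to compare yours against. The parts of your proposal that are actually carried out are correct. The density of periodic points does follow from the remark after the definition of C-type operators, with no use of the hypothesis. Your block computation is also right: for $n\in[2^{k-1},2^{k}[$ with $n\ge 1$, iterating the shift $\Delta^{(k)}-1$ times gives $\bigl(\prod_{j=b_n+1}^{b_{n+1}-1}w_j\bigr)e_{b_{n+1}-1}=2^{\delta^{(k)}}e_{b_{n+1}-1}$, and one more application yields $T^{\Delta^{(k)}}e_{b_{n}}=2^{\delta^{(k)}-\tau^{(k)}}e_{b_{\varphi(n)}}-e_{b_{n}}$; the quantity $|v_{n}|\prod_{j=b_n+1}^{b_{n+1}-1}|w_j|=2^{\delta^{(k)}-\tau^{(k)}}$ being unbounded is indeed the engine of the hypercyclicity proof in \cite{Monster}.

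As a proof, however, the hypercyclicity half is circular: after sketching the construction you conclude by invoking Theorem~6.17 of \cite{Monster}, which is exactly the statement to be proved. Everything substantive is left undone --- the choice of the indices $n_{s}$ in iterated preimages under $\varphi$, of the amplitudes $c_{s}$ and of the iterates $N_{s}$, the control of the tails in $\ell^{p}$, and the verification that the feedback terms $-e_{b_{n}}$ and the later stages of the cascade do not destroy earlier approximations. That bookkeeping is the actual content of the hypercyclicity criterion for C-type operators in \cite{Monster}, and naming the difficulties is not the same as resolving them. Since the present paper also cites the result without proof, your treatment is no less complete than the paper's, but it should be presented honestly as a citation accompanied by motivation, not as a proof.
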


The last result we will need gives sufficient conditions on the sequences $(\delta^{(k)})_{k \in \N}$ and $(\tau^{(k)})_{k \in \N}$ to construct sequences that
satisfies the assumptions of Lemma \ref{Lem_6.11}.  It has been
obtained within the beginning of the proof of Theorem 6.18 in \cite{Monster}.

\begin{lemma}\label{Lem_6.18}
Assume that the sequences $(\delta^{(k)})_{k \in \N}$ and
$(\tau^{(k)})_{k \in \N}$ satisfy 
\[\limsup_{k\to+\infty}\frac{\tau^{(k)}}{\delta^{(k)}}<1 \quad
  \text{and} \quad
  \sum_{k\geq1} 2^k2^{\frac{\delta^{(k-1)}-\tau^{(k)}}{2}}(\Delta^{(k)})^{\frac{1-\frac{1}{p}}{2}} \leq1 \, \]
  and that the sequence $(2^{\delta^{(k-1)}-\tau^{(k)}}(\Delta^{(k)})^{1-\frac{1}{p}})$ is non-increasing.
Then hypotheses (\ref{H1}), (\ref{H2}) and (\ref{H3}) of Lemma \ref{Lem_6.11} are satisfied with the following choices: 
		\begin{itemize}
			\item $C=\frac{1}{4}$,
			\item $\beta_l=4\cdot 2^{\delta^{(k-1)}-\tau^{(k)}}(\Delta^{(k)})^{1-\frac{1}{p}}$ when $l\in[2^{k-1},2^k[$,
			\item $X_l=\Vert \sum_{k=b_l}^{b_{l+1}-1}\left(\prod_{s=k+1}^{b_{l+1}-1}w_s\right)x_ke_k\Vert$ when $l\in[2^{k-1},2^k[$,
			\item $N_l=\Delta^{(k)}-\delta^{(k)}$ when  $l\in[2^{k-1},2^k[$.
		\end{itemize}
\end{lemma}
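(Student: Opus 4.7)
The plan is to verify each of the hypotheses (\ref{H1}), (\ref{H2}) and (\ref{H3}) in turn, exploiting the explicit block structure of the $C_{+,1}$-type operator $T$. Fix $l\in[2^{k-1},2^{k}[$, so that block $l$ has length $\Delta^{(k)}$ and internal weights $w_{i}^{(k)}\in\{1,2\}$ (equal to $2$ for $1\le i\le \delta^{(k)}$ and to $1$ for $\delta^{(k)}<i<\Delta^{(k)}$). Hypothesis (\ref{H1}) is immediate, since every $w_{i}^{(k)}\ge 1$: each prefactor $\prod_{s}w_{s}^{(k)}$ in the definition of $X_{l}$ is at least $1$, so $\|P_{l}x\|\le X_{l}$.

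For (\ref{H2}) and (\ref{H3}), I first analyse the orbit $T^{j}P_{l}x$. The action of $T$ shifts each basis vector $e_{b_{l}+i}$ forward within the block (picking up the weight $w_{i+1}^{(k)}$ at each step), until the front reaches $e_{b_{l+1}-1}$; the next application of $T$ then produces a scattering that sends mass $v_{l}\cdot e_{b_{\varphi(l)}}$ down to block $\varphi(l)\in[2^{k-2},2^{k-1}[$ with coefficient $v_{l}=2^{-\tau^{(k)}}$, and mass $-2^{-\delta^{(k)}}e_{b_{l}}$ back to the start of block $l$. Iterating, the component of $T^{j}P_{l}x$ lying in a block $n<l$ is produced by chains of scatterings $l\to\varphi(l)\to\varphi^{2}(l)\to\cdots\to n$, along which the amplitudes compose multiplicatively.

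For (\ref{H2}), the bound $\|P_{n}T^{j}P_{l}x\|\le \beta_{l}X_{l}/4$ is then obtained by summing contributions over all such chains. The dominant term (a single scattering into $\varphi(l)$) contributes a factor at most $v_{l}=2^{-\tau^{(k)}}$ times the amplitude already stored in the weighted norm $X_{l}$; further shifting inside block $\varphi(l)$ gains at most a factor $2^{\delta^{(k-1)}}$, and collecting up to $\Delta^{(k)}$ positions in the $\ell^{p}$-norm yields a factor $(\Delta^{(k)})^{1-1/p}$ via H\"older's inequality. Put together, this yields exactly $2^{\delta^{(k-1)}-\tau^{(k)}}(\Delta^{(k)})^{1-1/p}X_{l}=\beta_{l}X_{l}/4$. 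Deeper chains are handled inductively, using the monotonicity of the sequence $(2^{\delta^{(k-1)}-\tau^{(k)}}(\Delta^{(k)})^{1-1/p})_{k}$ together with the summability $\sum_{l}\sqrt{\beta_{l}}\le 1$, which is the assumed bound $\sum_{k\ge 1}2^{k}2^{(\delta^{(k-1)}-\tau^{(k)})/2}(\Delta^{(k)})^{(1-1/p)/2}\le 1$ rewritten in these terms.

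For (\ref{H3}), the critical observation is that $N_{l}=\Delta^{(k)}-\delta^{(k)}$ is precisely the length of the ``flat'' part of the block (where weights equal $1$), so for $0\le j\le N_{l}$ the internal shifts before the first scattering do not amplify by any factor of $2$; the same chain analysis as for (\ref{H2}) then yields the required bound with $\|P_{l}x\|$ in place of $X_{l}$. The main obstacle throughout is the combinatorial bookkeeping of multi-scattering chains, and the verification that the precise numerical thresholds built into the hypotheses together with the condition $\limsup_{k\to+\infty}\tau^{(k)}/\delta^{(k)}<1$ are tight enough to absorb all their contributions.
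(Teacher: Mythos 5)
The first thing to note is that the paper contains no proof of this lemma: it is imported verbatim from the beginning of the proof of Theorem~6.18 in \cite{Monster}, and that argument in turn rests on the explicit norm estimates for $P_nT^jP_l$ established there for general C-type operators. So your proposal cannot be checked against an internal argument; it has to stand on its own as a reconstruction of those estimates, and as such it has genuine gaps. Your verification of (\ref{H1}) is fine (all weight products are $\geq 1$), and the qualitative picture of the orbit --- forward shifts inside a block, then scattering with amplitude $v_l=2^{-\tau^{(k)}}$ into block $\varphi(l)$ and amplitude $-2^{-\delta^{(k)}}$ back to $e_{b_l}$, composed along chains $l\to\varphi(l)\to\cdots$ --- is the right one.

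The problems are in the quantitative part. First, your justification of (\ref{H3}) is wrong as stated: for $0\leq j\leq N_l=\Delta^{(k)}-\delta^{(k)}$ the internal shifts certainly \emph{do} amplify, since a coefficient at position $b_l+i$ with $i<\delta^{(k)}$ picks up a factor $2^{\min(i+j,\delta^{(k)})-i}$, up to $2^{\delta^{(k)}}$, long before any scattering. The correct mechanism is different: within $N_l$ iterations only the coefficients with $i\geq\delta^{(k)}$ can reach $e_{b_{l+1}-1}$ and hence scatter into a block $n<l$, and precisely those coefficients carry weight products equal to $1$; that is why the bound can be stated with $\Vert P_lx\Vert$ in place of $X_l$. (Your plan also never addresses the case $n=l$ that the statement of (\ref{H3}) formally allows.) Second, the two structural hypotheses you propose to use ``inductively'' to absorb deeper chains --- the monotonicity of $2^{\delta^{(k-1)}-\tau^{(k)}}(\Delta^{(k)})^{1-1/p}$ and the bound $\sum_k 2^k2^{(\delta^{(k-1)}-\tau^{(k)})/2}(\Delta^{(k)})^{(1-1/p)/2}\leq1$ --- are not available for that purpose: they are exactly the admissibility requirements on $(\beta_l)$ imposed by Lemma~\ref{Lem_6.11}, namely that $(\beta_l)$ be non-increasing and that $\sum_l\sqrt{\beta_l}\leq1$ (the factor $2^k$ merely counts the indices $l\in[2^{k-1},2^k[$), and they say nothing about how the amplitudes $v_{\varphi^i(l)}$ compose along a chain. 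Controlling those compositions is the substantive content of the computation in \cite{Monster} and is entirely deferred in your plan. Minor but symptomatic slips: $\varphi(l)=l-2^{k-1}$ ranges over all of $[0,2^{k-1}[$, not over $[2^{k-2},2^{k-1}[$, and the H\"older factor for block $\varphi(l)$ should involve the length of \emph{that} block rather than $\Delta^{(k)}$ (harmless when $p=1$, which is the case actually used later, but it shows the bookkeeping is not yet under control).
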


This preparatory being done, we  can now state and prove the main result of this section which proves that there is no link between $\mathcal{U}$-frequent hypercyclicity with respect to any sequences in $\mathcal{S}$ and chaos.

\begin{theorem}\label{TH_chaos_not_ufhca}
	For any $a\in\SS$, there exists a chaotic operator on $\ell^1(\mathbb{N})$ which is not $\mathcal{U}$-frequently hypercyclic with respect to $a$.
\end{theorem}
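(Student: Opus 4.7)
The plan is to construct a $C_{+,1}$-type operator $T$ on $\ell^1(\N)$ and then invoke Theorem~\ref{TH_C_type_not_UFHC_a}. The parameters $(\delta^{(k)},\Delta^{(k)},\tau^{(k)})$ are chosen in two stages. First, $\tau^{(k)}$ and $\delta^{(k)}$ are fixed independently of $a$ so that simultaneously $\delta^{(k)}-\tau^{(k)}\to+\infty$ (chaos via Theorem~\ref{Thm_6.17}), $\limsup_k \tau^{(k)}/\delta^{(k)}<1$, $(2^{\delta^{(k-1)}-\tau^{(k)}})_k$ is non-increasing, and $\sum_k 2^k 2^{(\delta^{(k-1)}-\tau^{(k)})/2}\leq 1$ (the hypotheses of Lemma~\ref{Lem_6.18} with $p=1$, where the factor $(\Delta^{(k)})^{(1-1/p)/2}$ collapses to $1$). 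A valid choice is $\tau^{(k)}=\delta^{(k-1)}+4k+2$ and $\delta^{(k)}=k\tau^{(k)}$.

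In the second stage, given $a\in\SS$, the parameter $\Delta^{(k)}$ is chosen inductively as a multiple of $2\Delta^{(k-1)}$ (so that the $C$-type divisibility condition on $(b_n)$ holds) and large enough that $(2\delta^{(k)}+1)\,v_{\Delta^{(k)}-\delta^{(k)}}(a)\leq 1/k$, which is possible since $v_n(a)\to 0$ for $a\in\SS$; in particular $\delta^{(k)}/\Delta^{(k)}\to 0$. Lemma~\ref{Lem_6.18} then produces hypotheses (H1)--(H3) of Lemma~\ref{Lem_6.11} with $C=1/4$, $N_l=\Delta^{(k)}-\delta^{(k)}$ for $l\in[2^{k-1},2^k)$, and the stated $X_l$; hypothesis (H4) follows automatically from hypercyclicity, as observed in the proof of Theorem~\ref{TH_C_type_not_UFHC_a}. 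Applying Lemma~\ref{Lem_Fact_6.13} with $k_0=\delta^{(k)}$ and $k_1=\Delta^{(k)}$ (the required conditions $|w_{b_l+k}|=1$ on $(\delta^{(k)},\Delta^{(k)})$ and $\prod_{s=b_l+\delta^{(k)}+1}^{b_{l+1}-1}|w_s|=1$ are immediate from the $C_{+,1}$-type weights), one obtains that the ``bad'' set $\{0\leq j\leq J:\|P_l T^j P_l x\|<X_l/2\}$ is contained, independently of the hypercyclic vector $x$, in $[0,J]\cap B^{(k)}$ with $B^{(k)}:=\bigcup_{r\geq 0}[r\Delta^{(k)}+j_0,\,r\Delta^{(k)}+j_0+2\delta^{(k)}]$.

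The remaining task is to establish condition (C) of Theorem~\ref{TH_C_type_not_UFHC_a}, namely $\sup_{s\geq N_l}\tfrac{\sum_{j\leq s}a_j\indi{B^{(k)}}(j)}{\sum_{j\leq s}a_j}\to 0$ as $k\to\infty$. This is a \emph{uniform}-in-$s$ bound, hence strictly stronger than the limsup statement of Proposition~\ref{Prop2}. The plan is to split into two regimes. For $s\in[N_l,\Delta^{(k)})$, the set $B^{(k)}\cap[0,s]$ is at most one short interval and, using $\sum_{j\leq s}a_j\geq a_s/v_s(a)$, the ratio is bounded by $(2\delta^{(k)}+1)v_{N_l}(a)\leq 1/k$. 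For $s\geq\Delta^{(k)}$, partition $[0,s]$ into full blocks of length $\Delta^{(k)}$ plus a partial block and bound each per-block ratio $\rho_r=\sum_{j\in I_r}a_j/\sum_{j=r\Delta^{(k)}}^{(r+1)\Delta^{(k)}-1}a_j$. The key step, and the main obstacle, is a \emph{refined} lower bound on the block denominator, namely $\sum_{j=r\Delta^{(k)}}^{(r+1)\Delta^{(k)}-1}a_j\geq c\,a_{(r+1)\Delta^{(k)}-1}/\max(v_{r\Delta^{(k)}}(a),1/\Delta^{(k)})$ with $c>0$ absolute, obtained by iterating $a_{j+1}/a_j\leq 1+v_j(a)$ backwards from the block's last term together with the monotonicity of $(v_n(a))$; the naive bound $\sum a_j\geq \Delta^{(k)}a_{r\Delta^{(k)}}$ is too weak for rapidly growing $a\in\SS$. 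Combined with $\sum_{j\in I_r}a_j\leq(2\delta^{(k)}+1)a_{(r+1)\Delta^{(k)}-1}$, this yields $\rho_r\leq O((2\delta^{(k)}+1)\max(v_{r\Delta^{(k)}}(a),1/\Delta^{(k)}))\leq O((2\delta^{(k)}+1)v_{\Delta^{(k)}}(a))=O(1/k)$ uniformly for $r\geq 1$; the cases $r=0$ and the partial block are handled analogously. Condition (C) follows, Theorem~\ref{TH_C_type_not_UFHC_a} gives $T\notin UFHC_a(T)$, while chaos comes from Theorem~\ref{Thm_6.17}.
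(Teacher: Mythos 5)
Your proposal is correct and follows essentially the same route as the paper: the same $C_{+,1}$-type construction with $\delta^{(k)},\tau^{(k)}$ fixed first (independently of $a$) and $\Delta^{(k)}$ chosen afterwards using $v_n(a)\to 0$, the same chain Theorem~\ref{Thm_6.17} / Lemma~\ref{Lem_6.18} / Lemma~\ref{Lem_Fact_6.13} / Theorem~\ref{TH_C_type_not_UFHC_a}, and a block decomposition of $[0,s]$ into blocks of length $\Delta^{(k)}$ for the final density estimate. The only substantive divergence is in the interior blocks of that estimate: the paper compares each bad interval to the block of length $N=\Delta^{(k)}$ sitting immediately \emph{above} it, so that the crude bound $\sum_{j=rN}^{(r+1)N-1}a_j\geq N a_{rN}$ already yields the contribution $\frac{2n+1}{N}$, whereas you compare each bad interval to the block \emph{containing} it, which is what forces your refined lower bound $\sum_{j=r\Delta^{(k)}}^{(r+1)\Delta^{(k)}-1}a_j\geq c\,a_{(r+1)\Delta^{(k)}-1}/\max(v_{r\Delta^{(k)}}(a),1/\Delta^{(k)})$ — both variants are valid and lead to the same conclusion.
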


\begin{proof}
	
First, let us fix a sequence $(\delta^{(k)})_{k \in \N}$ such that $\frac{1}{2}\delta^{(k)}-\delta^{(k-1)}$ is increasing and
	\begin{equation}\label{eqC+2}
2^{\delta^{(k-1)}-\frac{1}{2}\delta^{(k)}}\leq\frac{1}{4^{2k}}
	\end{equation}
and let us define the sequence $(\tau^{(k)})_{k \in \N}$ by setting
$\tau^{(k)}=\frac{1}{2}\delta^{(k)}$. Using Theorem~\ref{Thm_6.17}, it
is clear that for any choice of sequence $(\Delta^{(k)})_{k \in \N}$,
the operator will be chaotic. Moreover, the assumptions of
Lemma \ref{Lem_6.18} are clearly fulfilled since $p=1$. It gives the existence of a
constant $C>0$ and of sequences $(\beta_l)_{l\in\N}$, $(X_l)_{l \in
  \N}$ and  $(N_l)_{l\in\N}$ satisfying hypotheses (\ref{H1}),
(\ref{H2}) and (\ref{H3}) of Lemma \ref{Lem_6.11}, where $C=\frac{1}{4}$ and the sequence $N_l=\Delta^{(k)}-\delta^{(k)}$ when  $l\in[2^{k-1},2^k[$. 

It remains to show that for a good choice of $(\Delta^{(k)})_{k \in \N}$, condition~(\ref{H5}) is satisfied so that by invoking Theorem~\ref{TH_C_type_not_UFHC_a}, we can deduce that $T$ is not $\mathcal{U}$-frequently hypercyclic with respect to $a$. 

We first remark that by definition of operators of $C_{+,1}$-type, for every $i\in\, ]\delta^{(k)},\Delta^{(k)}[$, one has
$w_{i}^{(k)}=1$ hence $\prod_{i=\delta^{(k)}+1}^{\Delta^{(k)}-1}w_{i}^{(k)}=1$. Consequently,
Lemma~\ref{Lem_Fact_6.13} can be applied with $k_0=\delta^{(k)}$ and
$k_1=\Delta^{(k)}$ when $l\in[2^{k-1},2^k[\,$. We get that for every $k\ge 1$, every $l\in [2^{k-1},2^{k}[$, there exists $j_0 < \Delta^{(k)}$ such that for every $J\in\N$,
\[	\left\{0\leq j\leq J:\ \Vert
          P_lT^jP_lx\Vert<\frac{X_l}{2}\right\} \subseteq[0,J]\cap
	\bigcup_{r\in\N}\big[r\Delta^{(k)}+j_0,r\Delta^{(k)}+j_0+2\delta^{(k)}\big].
\]
Therefore, it suffices to find a sequence $(\Delta^{(k)})_{k \in \N}$ satisfying for every $k\in\N$
	\begin{equation}\label{eqC+1}
	\lim_{k\to +\infty}\max_{j_0<\Delta^{(k)}}\sup_{s\geq\Delta^{(k)}-\delta^{(k)}}\frac{\sum_{j=0}^{s}a_j\indi{\cup_{r\in\N}[r\Delta^{(k)}+j_0,r\Delta^{(k)}+j_0+2\delta^{(k)}]}(j)}{\sum_{j=0}^{s}a_j}=0
 	\end{equation}
in order to conclude that condition~(\ref{H5}) is satisfied:
\[
	\liminf_{l\to + \infty}\inf_{s\geq
          N_l}\frac{\sum_{j=0}^{s}a_j\indi{\{j\in \N:\ \Vert P_lT^jP_lx\Vert\geq \frac{X_l}{2}\}}(j)}{\sum_{j=0}^{s}a_j}=1.
\]
In other words, it suffices to prove that for every $n\in\N$,
	\[\lim_{N\to+\infty}\max_{j_0<N}\sup_{s\geq N-n}\frac{\sum_{j=0}^{s}a_j\indi{\cup_{r\in\N}[rN+j_0,rN+j_0+2n]}(j)}{\sum_{j=0}^{s}a_j}=0.\]
	Let us divide this proof into two cases.
	\begin{itemize}
		\item If $N-n\leq s<N$, since $(a_n)_{n \in
                  \N}\in \SS$ and in particular since $v_j(a)$ is non-increasing, we get
		\begin{align*}
		\max_{j_0<N}\frac{\sum_{j=0}^{s}a_j\indi{\cup_{r\in\N}[rN+j_0,rN+j_0+2n]}(j)}{\sum_{j=0}^{s}a_j}&\leq \frac{\sum_{j=s-2n}^{s}a_j}{\sum_{j=0}^{s}a_j}\\
		&=1-\frac{\sum_{j=0}^{s-2n-1}a_j}{\sum_{j=0}^{s}a_j}\\
		&=1-\frac{1}{\prod_{j=s-2n}^{s}(1+v_j(a))}\\
		&\leq 1-\frac{1}{\prod_{j=N-3n}^{N-n}(1+v_j(a))}.
		\end{align*}
		\item If $r_{0}N\leq s<(r_{0}+1)N$ for $r_{0}\geq1$, then
		\[\max_{j_0<N}\frac{\sum_{j=0}^{s}a_j\indi{\cup_{r\in\N}[rN+j_0,rN+j_0+2n]}(j)}{\sum_{j=0}^{s}a_j}\leq
                  \frac{\sum_{j=s-2n}^{s}a_j+\sum_{r=1}^{r_{0}}\sum_{j=rN-2n}^{rN}a_j}{\sum_{j=0}^{s}a_j}.\]
		For the first term in the right hand side formula, we have
		\[\frac{\sum_{j=s-2n}^s a_j}{\sum_{j=0}^s a_j}\le 1-\frac{1}{\prod_{j=s-2n}^{s}(1+v_j(a))} \le 1-\frac{1}{\prod_{j=N-3n}^{N-n}(1+v_j(a))}\]
as we have already proved in the previous case.
		Let us now decompose the second term as the whole sum and its last term. 
		We begin by the last term:
		\begin{align*}
		\frac{\sum_{j=r_0 N-2n}^{r_{0}N}a_j}{\sum_{j=0}^{s}a_j}&\leq \frac{\sum_{j=r_0N-2n}^{r_{0}N}a_j}{\sum_{j=0}^{r_0N}a_j}\\
		&\leq1-\frac{1}{\prod_{j=r_{0}N-2n}^{r_{0}N}(1+v_j(a))}\\
		&\leq
                  1-\frac{1}{\prod_{j=N-3n}^{N-n}(1+v_j(a))}.
		\end{align*}

		The other terms can be treated in the following fashion
		\begin{align*}		
		\frac{\sum_{r=1}^{r_{0}-1}\sum_{j=rN-2n}^{rN}a_j}{\sum_{j=0}^{s}a_j}&\leq \frac{\sum_{r=1}^{r_{0}-1}(2n+1)a_{rN}}{\sum_{j=0}^{s}a_j}\\
		&\leq \frac{2n+1}{N}\frac{\sum_{r=1}^{r_{0}-1}\sum_{j=rN}^{(r+1)N-1}a_j}{\sum_{j=0}^{s}a_j}\\
		&\leq \frac{2n+1}{N}.
		\end{align*}
	\end{itemize}
In summary, we have shown that
\[\max_{j_0<N}\sup_{s\ge N-n}\frac{\sum_{j=0}^{s}a_j\indi{\cup_{r\in\N}[rN+j_0,rN+j_0+2n]}(j)}{\sum_{j=0}^{s}a_j}\le 2-\frac{2}{\prod_{j=N-3n}^{N-n}(1+v_j(a))}+\frac{2n+1}{N}\]
which tends to $0$ as $N\to +\infty$. This concludes the proof.
\end{proof}


\bibliographystyle{plain}
\bibliography{mybiblio}

\end{document}